\numberwithin{equation}{section}
\def\moverlay{\mathpalette\mov@rlay}
\def\mov@rlay#1#2{\leavevmode\vtop{%
   \baselineskip\z@skip \lineskiplimit-\maxdimen
   \ialign{\hfil$\m@th#1##$\hfil\cr#2\crcr}}}
\newcommand{\charfusion}[3][\mathord]{
    #1{\ifx#1\mathop\vphantom{#2}\fi
        \mathpalette\mov@rlay{#2\cr#3}
      }
    \ifx#1\mathop\expandafter\displaylimits\fi}
\DeclareMathOperator*{\osc}{osc}
\begin{document}
\author[Kim]{Kyeongbae Kim}
\address{Department of Mathematical Sciences, Seoul National University, Seoul 08826, Korea}
\email{kkba6611@snu.ac.kr}
\author[Lee]{Ho-Sik Lee}
\address{Fakult\"at f\"ur Mathematik, Universit\"at Bielefeld, Postfach 100131, D-33501 Bielefeld, Germany}
\email{ho-sik.lee@uni-bielefeld.de}
\author[Prasad]{Harsh Prasad}
\address{Fakult\"at f\"ur Mathematik, Universit\"at Bielefeld, Postfach 100131, D-33501 Bielefeld, Germany}
\email{hprasad@math.uni-bielefeld.de}

\makeatletter
\@namedef{subjclassname@2020}{\textup{2020} Mathematics Subject Classification}
\makeatother

\subjclass[2020]{Primary: 
35R09, 
35A01
; 
Secondary: 
35D30, 
80A22, 
35K65
}

\keywords{Nonlocal equations, Parabolic equations, Stefan problem, Boundary continuity, Intrinsic scaling}
\thanks{Kyeongbae Kim is supported from the National Research Foundation of Korea (NRF) through IRTG 2235/NRF-2016K2A9A2A13003815 at Seoul National University. Ho-Sik Lee is supported from funding of the Deutsche Forschungsgemeinschaft (DFG, German Research Foundation) through GRK 2235/2 2021 - 282638148. Harsh Prasad is supported from Deutsche Forschungsgemeinschaft (DFG, German Research Foundation) through Project-ID 317210226 – SFB 1283}

\title[Nonlocal two-phase Stefan problem]{Logarithmic continuity for the Nonlocal degenerate two-phase Stefan problem}

\begin{abstract}
We establish certain oscillation estimates for weak solutions to nonlinear, anomalous phase transitions modeled on the nonlocal two-phase Stefan problem. The problem is singular in time, is scaling deficient and influenced by far-off effects. We study the the problem in a geometry adapted to the solution and obtain oscillation estimates in intrinsically scaled cylinders. Furthermore, via certain uniform estimates, we construct a continuous weak solution to the corresponding initial boundary value problem with a quantitative modulus of continuity.
\end{abstract}

\maketitle


\section{Introduction}
We study the following nonlocal two-phase Stefan problem
\begin{align}\label{eq.main}
    \partial_t(u+\beta(u))+\mathcal{L}u\ni 0\quad\text{in }\Omega\times(0,T]\subset\setR^{n+1},
\end{align}
where $\Omega$ is an open and bounded domain in $\setR^n$ with $n\geq 2$, $u$ is an unknown, and a maximal monotone graph $\beta:\setR\rightarrow[0,1]$ is defined as
\begin{equation*}
\begin{aligned}
    \beta(\xi)=\begin{cases}
        1\quad&\text{if }\xi>0,\\
        [0,1]\quad&\text{if }\xi=0,\\
        0\quad&\text{if }\xi<0.
    \end{cases}
\end{aligned}
\end{equation*}
In addition, the nonlocal operator $\mathcal{L}$ is defined by
\begin{align*}
    \mathcal{L}u(x,t)=\mathrm{p.v.}\int_{\bbR^n}\frac{|u(x,t)-u(y,t)|^{p-2}(u(x,t)-u(y,t))}{|x-y|^{n+sp}}k(x,y,t)\,dy,
\end{align*}
where $s\in(0,1)$, $p
>2$ and $k:\bbR^n\times \bbR^n\times(0,T]\to\bbR$ is a measurable function with 
\begin{align*}
    k(x,y,t)=k(y,x,t)\quad\text{and}\quad \Lambda^{-1}\leq k(x,y,t)\leq \Lambda
\end{align*}
for some constant $\Lambda\geq$1. The aim of the article is to derive modulus of continuity estimates in a quantitative way and to establish the existence of a continuous weak solution to the corresponding initial boundary value problem of the nonlocal two-phase Stefan problem.

\subsection{History and Motivation}
A variational model of phase transition, say from ice to water, as a function of the unknown temperature $\theta$, is given by the following classical Stefan equation
\[
\partial_t (\theta+\beta(\theta)) = \Delta \theta
\]
(see  \cite{stefan1889,kamen61,Ole60}), where $\beta$ is a maximal monotone graph. The reason why we have such a graph is to represent the latent heat of the water molecules as they undergo phase transition from ice at 0\textcelsius~to water at 0\textcelsius; outside of this transition region we have the usual heat equation; we also allow for negative temperatures. The equation also models many other physical phenomena and we refer the interested reader to \cite{vis08}.

The first issue in studying the equation is the notion of solution employed. We are interested in weak solutions which are obtained via integration by parts and differential inclusions. There are other notions of solution possible as in \cite{vis08}. Having established the notion of solution, we turn to the well-posedness for weak solutions. A natural question would be to ask for the existence of a continuous temperature - as ice transitions to water the equation should not predict sudden discontinuities in the temperature. It would also be of interest to quantify the modulus of continuity.

The continuity of temperature was indeed shown in a series of works \cite{diBen82,zie82,cafev83,sac83,diBen86} which also dealt with boundary regularity. Furthermore, the methods in \cite{zie82,diBen82,diBen86} were more general and could deal with a general nonlinear elliptic part with linear growth in the gradient. The paper \cite{diBen86} also provided a boundary modulus of continuity, namely
\begin{equation}\label{eq:mod-1}
    \omega(r) = (\ln|\ln(Cr)|)^{-\alpha} \quad \mbox{ for } \quad 0<r<1 
\end{equation}
where $C>0$ and $\alpha >0$ are universal constants.

For the case where the Laplace operator is replaced by more general degenerate diffusion of $p$-Laplace type ($p>2$), such equations were first studied in  \cite{urb97,urb00} and an interior modulus of continuity was obtained in \cite{BarKuuUrb14} viz.
\begin{equation}\label{eq:mod-2}
    \omega(r) = |\ln(Cr)|^{-\alpha} \quad \mbox{ for } \quad 0<r<1 
\end{equation}
where $C>0$ and $\alpha >0$ are universal constants.
We note that \eqref{eq:mod-2} is an improvement over \eqref{eq:mod-1} even for the Laplace operator. Subsequently, a boundary modulus of continuity was derived in \cite{BarKuuLinUrb18} which was of type \eqref{eq:mod-1}.

Later, the improved modulus \eqref{eq:mod-2} for the boundary case was proved in \cite{Lia22}. 
On the other hand, recently, there are noteworthy results for interior regularity for the Stefan problem with the $p$-Laplace operator. More specifically, when $p=n\geq2$, the improved modulus of continuity, which is
\begin{equation*}
    \omega(r)\eqsim \exp\left(-c|\ln r|^{1/n}\right)
\end{equation*} 
was derived in \cite{Lia24t,GiaLiaUrb24}. Moreover, significant H\"older continuity results with the $p$-Laplace operator when $p>\max\{n,2\}$ were established in \cite{Lia24s,GiaLiaUrb24}, which are notable since such regularity was not readily anticipated for the equation \eqref{eq.main} involving such a degeneracy from $\beta(\cdot)$.

We note that the passage from the Laplace operator to the $p$-Laplace operator for parabolic problems leads to fundamental new difficulties as far as regularity is concerned since the equation is no longer scaling invariant and one must work in a geometry intrinsic to the equation; such a geometry also leads to non-trivial technical issues \cite{urb08}.

When modeling phase transition via the Stefan equation above, one assumes that the material is homogeneous - for example the ice consists only of water molecules. In practice, this corresponds to the assumption that the mean square displacement of the molecule in a given time $t$ is proportional to the time $t$ (which is also reflected in the classical heat equation). However, if strong heterogeneities are present at large scales, such an assumption is untenable. Such situations involving \textit{anomalous} scaling naturally lead to consideration of fractional derivatives \cite{Vol14,Rog23}.

In \cite{Caf22}, it was shown that the Stefan problem involving anomalous diffusion with initial-boundary value conditions admits a continuous solution and a logarithmic modulus of continuity is claimed. On the other hand, we study the initial boundary value problem for diffusion operators comparable to the fractional $p$-Laplace ($p>2$) and construct a weak solution with a quantitative boundary modulus of continuity as in \eqref{eq:mod-2} and also provide quantitative continuity estimates at the initial boundary.

Finally, we note that the regularity results for weak solutions to parabolic nonlocal equations which have no singular effect have been investigated extensively, see for instance \cite{KasWei23,FelKas13,Lia24l,Lia24,AdiPraTew24,ByuKim24,DinZhaZho21,Tav24,BraLinStr21,NguNowSirWei24, Str19,Str19h,PraTew23,Pra24,BanGarKin23,GarLinTav25,DieKimLeeNow25,LiaWei24,TakJiaMen24}.

\subsection{Interior Oscillation Estimate}Now we are going to provide our main results.
First, we investigate a quantitative oscillation estimate of a weak solution to a regularized problem of \eqref{eq.main}. To this end, we fix a cut-off function $\psi\in C_{c}^\infty(-1,1)$ such that $\int_{\bbR}\psi\,dt=1$ and we write $\psi_\epsilon(\xi)\coloneqq\frac{1}\epsilon\psi\left(\frac\xi\epsilon\right)$ for $\epsilon\in(0,1)$. We now consider a weak solution $u_\epsilon$ to
\begin{equation}\label{eq.inta}
    \partial_t(u_\epsilon+\beta_\epsilon(u_\epsilon))+\mathcal{L}u_\epsilon=0\quad\text{in }\Omega\times(0,T],
\end{equation}
where 
\begin{equation}\label{defn.betaeps}
    \beta_\epsilon(\xi)=(\beta*\psi_\epsilon)(\xi).
\end{equation}

Let us give a precise definition of a weak solution to \eqref{eq.inta}.
\begin{definition}
For $\epsilon\in(0,1)$, we say that $u_\epsilon$ is a locally bounded weak solution to \eqref{eq.inta} if 
    \begin{equation*}
        u_\epsilon\in L^{p}_{\mathrm{loc}}(0,T;W^{s,p}_{\mathrm{loc}}(\Omega))\cap C_{\mathrm{loc}}(0,T;L^2(\Omega))\cap L^{\infty}_{\mathrm{loc}}(0,T;L^{p-1}_{sp}(\bbR^n))\cap L^{\infty}_{\mathrm{loc}}(0,T;L^{\infty}_{\loc}(\Omega))
    \end{equation*}
    satisfies 
    \begin{align*}
        &-\int_{t_1}^{t_2}\int_{\Omega}(u_\epsilon+\beta_\epsilon(u_\epsilon))\partial_t\phi\,dx\,dt+\int_{\Omega}(u_\epsilon+\beta_\epsilon(u_\epsilon))\phi\,dx\Bigg\rvert_{t=t_1}^{t=t_2}\\
        &\quad+\int_{t_1}^{t_2}\int_{\bbR^n}\int_{\bbR^n}\frac{|u_{\epsilon}(x,t)-u_{\epsilon}(y,t)|^{p-2}(u_{\epsilon}(x,t)-u_{\epsilon}(y,t))(\phi(x,t)-\phi(y,t))}{|x-y|^{n+sp}}k(x,y,t)\,dx\,dy\,dt=0
    \end{align*}
    for any $\phi\in L^p(t_1,t_2;W^{s,p}(\Omega))\cap W^{1,2}(t_1,t_2;L^2(\Omega))$, where $(t_1,t_2]\Subset (0,T]$ and the support of $\phi(t,\cdot)$ is compactly embedded in $\Omega$ for each $t\in(0,T]$.
\end{definition}

For $f\in L^{\infty}(0,T;L^{p-1}_{sp}(\setR^n))$, $z_0\in\setR^{n+1}$, and $R>0$, we define the tail as follows:
\begin{align*}
\textrm{Tail}(f;Q_{R}(z_0))=\sup_{t_0-\rho^{sp}\leq t\leq t_0}\left(\rho^{sp}\int_{\setR^{n}\setminus B_{\rho}(x_0)}\dfrac{|f(y)|^{p-1}}{|x_0-y|^{n+sp}}\,dy\right)^{\frac{1}{p-1}}.
\end{align*}
For any $z_0=(x_0,t_0)\in\bbR^{n+1}$, $R>0$, $\mathcal{T}$ and $\theta>0$, we define 
\begin{align*}
I^{(\theta)}_{\tau}(t_0)=(t_0-\tau\theta,t_0]\quad\text{and}\quad{Q}^{(\theta)}_{{R},\mathcal{T}}(z_0)\coloneqq B_R(x_0)\times I^{(\theta)}_{\mathcal{T}}(t_0).
\end{align*}
In particular, we write
\begin{align*}
    Q^{(\theta)}_R(z_0)\coloneqq  {Q}^{(\theta)}_{{R},R^{sp}}(z_0)\quad\text{and}\quad Q_R(z_0)\coloneqq  {Q}^{(1)}_{{R},R^{sp}}(z_0).
\end{align*}

We are now ready to state our main results. We first give the explicit interior oscillation estimate.
\begin{theorem}\label{thm.intlast}
    Let $ u_\epsilon$ be a bounded weak solution to 
    \begin{equation*}
        \partial_t(u_\epsilon+\beta_\epsilon(u_\epsilon))+\mathcal{L}u_\epsilon=0\quad\text{in }Q_\mathcal{R}(z_0).
    \end{equation*}
    Let us take 
    \begin{equation*}
        \omega_0\coloneqq \max\{\|u_\epsilon\|_{L^{\infty}({Q}_{\mathcal{R}}(z_0))}+\mathrm{Tail}(u_\epsilon;{Q}_{\mathcal{R}}(z_0)),1\}.
    \end{equation*}
    Suppose $Q_{\rho_0}^{(\omega_0/4)^{2-p}}(z_0)\subset {Q}_{\mathcal{R}}(z_0)$ for some constant $\rho_0\in(0,\mathcal{R}]$. Then there is a constant $\varsigma=\varsigma(n,s,p,\Lambda)\in(0,1)$ such that for any $r< \rho_0$,
    \begin{align}\label{ineq.intlast}
        \osc_{Q^{(\omega_0/4)^{2-p}}_r(z_0)}u_\epsilon\leq c\omega_0\left(1+\ln(\rho_0/r)\right)^{-\varsigma/2}+4\epsilon
    \end{align}
    for some constant $c=c(n,s,p,\Lambda)$ independent of $\epsilon$.
\end{theorem}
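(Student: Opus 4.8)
The plan is to run a De Giorgi–type iteration over a shrinking sequence of intrinsically scaled cylinders, tracking the oscillation $\omega_j$ at scale $\rho_j$ and showing that either the oscillation already decays geometrically or else we are in a "degenerate regime" where $\beta_\epsilon$ is essentially constant and the standard nonlocal $p$-Laplacian machinery gives a reduction of oscillation. Fix $z_0$ and set $\rho_j = \rho_0 \sigma^j$ for a small $\sigma \in (0,1)$ to be chosen, and let $\omega_j$ be a candidate bound for $\osc u_\epsilon$ on $Q^{(\omega_j/4)^{2-p}}_{\rho_j}(z_0)$, started at $\omega_0$. The key dichotomy at each step: either (a) the measure of the set where $u_\epsilon$ is close to its supremum (or infimum) on the intrinsic cylinder is small — in which case a De Giorgi iteration, using the energy estimates for \eqref{eq.inta} together with the tail term controlled by $\mathrm{Tail}(u_\epsilon; Q_\mathcal{R})\le \omega_0$, pushes $u_\epsilon$ strictly below $\sup$ and yields $\omega_{j+1}\le (1-\lambda)\omega_j$ for some $\lambda=\lambda(n,s,p,\Lambda)$; or (b) both the super-level and sub-level sets have substantial measure, which forces an expansion of positivity / shrinking-of-measure argument. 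The subtlety of the Stefan problem is that the time-derivative term is $\partial_t(u_\epsilon+\beta_\epsilon(u_\epsilon))$ and $\beta_\epsilon$ has a jump of size $1$: when $\omega_j \le 1$, crossing the interface $u_\epsilon = 0$ is expensive, and one must compare $\omega_j$ with the "latent heat scale". This is precisely why the estimate \eqref{ineq.intlast} is only logarithmic and why the additive $4\epsilon$ appears — once $\omega_j$ drops below order $\epsilon$, the regularization $\beta_\epsilon$ is Lipschitz at the relevant scale and no further genuine gain is available, so the iteration terminates.

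Quantitatively, I expect the recursion to take the form $\omega_{j+1} \le \max\{\,(1-\lambda)\omega_j,\ C\epsilon\,\}$ only in the non-degenerate branch, while in the degenerate branch (where $u_\epsilon$ stays on one side of $0$ on a large portion of the cylinder, so $\beta_\epsilon(u_\epsilon)$ is frozen) the equation behaves like the nonlocal parabolic $p$-Laplacian and one gets a reduction $\omega_{j+1} \le (1-\lambda)\omega_j$ as well, but only after possibly shrinking the time-scale by a controlled factor, which is absorbed into the intrinsic geometry $Q^{(\omega_j/4)^{2-p}}_{\rho_j}$. Iterating a recursion of the shape $\omega_{j+1}^{} \le \omega_j - \lambda\,\omega_j\,(\omega_j/\omega_0)^{\kappa}$ (the extra factor $(\omega_j/\omega_0)^\kappa$ encoding the loss from the non-scale-invariant intrinsic geometry) is what produces the logarithmic — rather than power — decay: summing the inverse increments gives $\sum_j \omega_j^{-\kappa} \gtrsim j$, hence $\omega_j \lesssim \omega_0 j^{-1/\kappa} \lesssim \omega_0 (1+\ln(\rho_0/r))^{-\varsigma/2}$ after the standard change from geometric indexing $j \sim \ln(\rho_0/\rho_j)$ to a continuous radius $r$, with $\varsigma$ determined by $\kappa$ and the De Giorgi constants. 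The $+4\epsilon$ is carried along additively through each step and does not compound because it enters only as a threshold below which the argument halts.

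The main obstacle is handling the time term in the energy estimates in the presence of both the nonlocal tail and the discontinuity of $\beta$. Specifically, when we test the weak formulation with a truncation $(u_\epsilon - k)_\pm \zeta^p$, the term $\int \partial_t(u_\epsilon + \beta_\epsilon(u_\epsilon)) (u_\epsilon-k)_\pm \zeta^p$ does not simply produce $\frac12 \partial_t \|(u_\epsilon-k)_\pm\zeta^p\|_2^2$ because of the $\beta_\epsilon$ contribution; one needs the Steklov-averaged form of the equation and a careful analysis of the nonnegative "Boltzmann-type" function $\int_k^{u_\epsilon}(\sigma-k)_\pm\, d(\sigma + \beta_\epsilon(\sigma))$, exploiting monotonicity of $\beta_\epsilon$ to discard a good-signed term while keeping quantitative control of the bad part in terms of $\epsilon$ and the level $k$ relative to $0$. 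Combined with this, the nonlocal tail must be reabsorbed: each De Giorgi step produces a tail contribution that is controlled by $\mathrm{Tail}(u_\epsilon; Q_\mathcal{R}) + \sup$-norm on intermediate cylinders, and one has to verify that as the cylinders shrink the tail does not blow up — this works because $\omega_0$ was defined to dominate the tail at the largest scale and the intrinsic scaling only rescales time, so $\rho^{sp}\int_{\mathbb{R}^n\setminus B_\rho}|u_\epsilon|^{p-1}/|x_0-y|^{n+sp}$ stays bounded by $c\,\omega_0^{p-1}$ along the chain. Once these two technical points are in place, the oscillation recursion and its logarithmic resolution are essentially bookkeeping.
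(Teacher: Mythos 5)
Your overall strategy matches the paper's: a nested sequence of intrinsically scaled cylinders, a two-alternative De~Giorgi dichotomy at each step, propagation of tail control along the chain, and an arithmetic recursion of the type $\omega_{j+1}\le\omega_j\bigl(1-\lambda(\omega_j/\omega_0)^{\kappa}\bigr)$ that terminates once $\omega_j$ reaches the regularization floor $4\epsilon$. There is, however, a genuine gap in the geometry, and it also corrupts your final bookkeeping.

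You set $\rho_j=\rho_0\sigma^j$ with a \emph{fixed} ratio $\sigma\in(0,1)$. That is not compatible with the reduction of oscillation you are invoking. In the second alternative, the measure-shrinking step (Lemma~\ref{lem.di4}) only yields a small enough fraction after the free parameter $\varsigma$ is taken comparable to a power of $\overline{\omega}\sim(\omega_j/\omega_0)^{(n+sp)/sp}$, and the cylinder on which the subsequent pointwise bound then holds has radius comparable to $\rho_j$ times a further power of $\omega_j/\omega_0$. This is why the paper sets $\rho_{j+1}=f_1(\omega_j)\rho_j$ with $f_1(\omega_j)\sim(\omega_j/\omega_0)^{M_1}$: the spatial shrink rate is forced by the intrinsic scaling and degenerates as $\omega_j$ drops. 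It is not a choice one can replace by a fixed geometric ratio; if you do, you cannot close the De~Giorgi step, because the pointwise conclusion does not hold on $B_{\sigma\rho_j}$ but only on a much smaller ball.

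This changes the final counting. Since $\omega_j\ge\omega_0 2^{-sj}$, one has $f_1(\omega_j)\gtrsim 2^{-sM_1 j}/N_1$ and hence $\rho_j\gtrsim \rho_0\exp(-cj^2)$: the radii decay \emph{doubly} exponentially in $j$. Consequently the index $j$ at which $\rho_{j+1}<r\le\rho_j$ satisfies $j\sim\sqrt{\ln(\rho_0/r)}$, not $j\sim\ln(\rho_0/r)$ as you assert. Combined with $\omega_j\lesssim\omega_0(1+j)^{-\varsigma}$ — which the paper proves rigorously via the algebraic Lemma~\ref{lem.iter} rather than by your heuristic telescoping of inverse increments, though the mechanism is the same — the quadratic $j\leftrightarrow\ln$ dictionary is exactly what produces the exponent $-\varsigma/2$, rather than $-\varsigma$, in \eqref{ineq.intlast}. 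With a fixed $\sigma$ you would (illegitimately) derive the stronger exponent $-\varsigma$. The remaining ingredients you sketch — propagating $\mathrm{Tail}\bigl((u_\epsilon-\mu_i^\pm)_\pm;Q_i\bigr)\lesssim\omega_i$ along the chain, treating the singular $\beta_\epsilon$ term through Caccioppoli estimates and the latent-heat threshold, and the additive $4\epsilon$ floor — are in line with the paper's proof.
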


\subsection{Oscillation Estimate near the Boundary}
Next, we consider oscillation estimates of solution $u_\epsilon$ to \eqref{eq.inta} up to the boundary. We first recall a fractional Sobolev space. Let $f:\bbR^n\to \bbR$ be a measurable function and $\Omega\Subset \Omega'\subset\bbR^n$. We define
\begin{equation*}
    X_f^{s,p}(\Omega,\Omega')=\{u\in W^{s,p}(\Omega')\,:\,u\equiv f\text{ on }\bbR^n\setminus \Omega\}.
\end{equation*}
By \cite[Lemma 2.11]{BraLinSch18}, we observe $X_0^{s,p}(\Omega,\Omega')=X_0^{s,p}(\Omega,\bbR^n)$. Thus we write 
\begin{equation*}
    X_0^{s,p}(\Omega)\coloneqq X_0^{s,p}(\Omega,\bbR^n).
\end{equation*}
Let us give a notion of weak solution to \eqref{eq.inta} near the boundary.
\begin{definition}
   Let us fix $g\in L^p(0,T;W^{s,p}(\Omega'))\cap L^\infty(0,T;L^{p-1}_{sp}(\bbR^n))$ with $\Omega'\Supset\Omega$. For $I_\mathcal{R}=I_\mathcal{R}(t_0)$, we say that 
    \begin{equation*}
    u_\epsilon\in L^{p}(I_\mathcal{R};W^{s,p}(B_{\mathcal{R}}(x_0)))\cap C(I_{\mathcal{R}};L^2(B_{\mathcal{R}}(x_0)))\cap L^{\infty}(I_\mathcal{R};L^{p-1}_{sp}(\bbR^n))\cap L^{\infty}_{\loc}(I_\mathcal{R};L^{\infty}_{\loc}(\Omega))
    \end{equation*}is a locally bounded weak solution to 
\begin{equation*}
\left\{
\begin{alignedat}{3}
\partial_t (u_\epsilon+\beta_\epsilon(u_\epsilon))+\mathcal{L}u_\epsilon&= 0&&\qquad \mbox{in  $Q_\mathcal{R}(z_0)\cap \Omega_T$}, \\
u_\epsilon&=g&&\qquad  \mbox{in $Q_\mathcal{R}(z_0)\setminus \Omega_T$},
\end{alignedat} \right.
\end{equation*} if $u_\epsilon\in L^{p}(I_\mathcal{R};W^{s,p}(B))$ for some $B\Supset \Omega\cap B_{\mathcal{R}}(x_0)$ and 
    \begin{align*}
        &-\int_{t_1}^{t_2}\int_{\Omega}(u_\epsilon+\beta_\epsilon(u_\epsilon))\partial_t\phi\,dz+\int_{\Omega}(u_\epsilon+\beta_\epsilon(u_\epsilon))\phi\,dx\Bigg\rvert_{t=t_1}^{t=t_2}\\
        &\quad+\int_{t_1}^{t_2}\int_{\bbR^n}\int_{\bbR^n}\frac{|u_{\epsilon}(x,t)-u_{\epsilon}(y,t)|^{p-2}(u_{\epsilon}(x,t)-u_{\epsilon}(y,t))(\phi(x,t)-\phi(y,t))}{|x-y|^{n+sp}}k(x,y,t)\,dx\,dy\,dt=0
    \end{align*}
    holds for any $\phi\in L^p(t_1,t_2;X_0^{s,p}(\Omega\cap B_\mathcal{R}(x_0)))\cap W^{1,2}(t_1,t_2;L^2(\Omega\cap B_\mathcal{R}(x_0)))$ with $(t_1,t_2]\Subset I_R(t_0)$.
\end{definition}

We now assume that the complement of the domain $\Omega$ satisfies a measure density condition. More specifically, there exists a constant $\alpha_0\in(0,1)$ such that for any $x_0\in\partial\Omega$,
    \begin{equation}\label{ass.density}
        \inf_{0<r\leq1}|B_r(x_0)\setminus\Omega|\geq \alpha_0|B_r|.
    \end{equation}
We point out that such an assumption is standard to obtain the continuity at the boundary (see, for instance \cite{KorKuuPal16,BarKuuLinUrb18,Lia22}).

In addition, for any $g\in C(\overline{Q})$ with $Q\subset\setR^{n+1}$ which will be play a role as a boundary data, we always assume 
\begin{equation}\label{cond.oscg}
    |g(z_1)-g(z_2)|\leq \omega_g\left(|x_1-x_2|+|t_1-t_2|^{\frac1{sp}}\right)\quad\text{for any }z_1,z_2\in\overline{Q}
\end{equation}
for some non-decreasing function $\omega_g:\bbR^+\to\bbR^+$ with $\omega_g(0)=0$.

Let us state oscillation estimates at the lateral boundary.
\begin{theorem}\label{thm.bdylast}
For $\epsilon\in(0,1)$ and $\Omega'\Supset\Omega$, let
    \begin{equation*}
        u_\epsilon\in L^p(0,T;W^{s,p}(\Omega'))\cap C(0,T;L^2(\Omega))\cap L^\infty(0,T;L^{p-1}_{sp}(\bbR^n))
    \end{equation*}
    be a bounded weak solution to 
    \begin{equation*}
\left\{
\begin{alignedat}{3}
\partial_t (u_\epsilon+\beta_\epsilon(u_\epsilon))+\mathcal{L}u_\epsilon&= 0&&\qquad \mbox{in  $Q_\mathcal{R}(z_0)\cap \Omega_T$}, \\
u_\epsilon&=g&&\qquad  \mbox{in $Q_\mathcal{R}(z_0)\setminus \Omega_T$},
\end{alignedat} \right.
\end{equation*}
where $x_0\in\partial\Omega$ and $Q_{\mathcal{R}}(z_0)\subset \Omega'_T$.
    Let us assume  
    \begin{equation}\label{ass.bdylast}
        \|u_\epsilon\|_{L^\infty({Q}_{\mathcal{R}}(z_0))}+\mathrm{Tail}(u_\epsilon;{Q}_{\mathcal{R}}(z_0))\leq 1.
    \end{equation}
    Then there exist constants $\varsigma=\varsigma(n,s,p,\Lambda,\alpha_0)\in(0,1)$ such that if
    \begin{align*}
    \omega_g(r)\leq c_g\left(1+\ln(\mathcal{R}/r)\right)^{-\delta}
\end{align*}
holds where $\delta\in(\varsigma,1)$ and $r\in(0,\mathcal{R}]$, then
    \begin{align}\label{ineq.bdylast}
        \osc_{Q_r(z_0)}u_\epsilon\leq c\left(1+\ln(\mathcal{R}/r)\right)^{-\varsigma/2}+4\epsilon
    \end{align}
holds for some constant $c=c(n,s,p,\Lambda,\alpha_0,\delta,c_g)$.
\end{theorem}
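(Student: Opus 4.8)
The plan is to run a De Giorgi–type iteration adapted to the intrinsic geometry, tracking how the oscillation contracts as we pass to smaller cylinders, and to feed the boundary data modulus $\omega_g$ into the argument as a controlled perturbation. Concretely, I would fix a point $z_0$ on the lateral boundary and set $\omega_0 := 1$ by the normalization \eqref{ass.bdylast}. The strategy is to construct a decreasing sequence of intrinsically scaled cylinders $Q_j := Q_{r_j}(z_0)$ with $r_j = \sigma^j \mathcal{R}$ for a fixed $\sigma \in (0,1)$, together with a sequence $\omega_j$ satisfying $\osc_{Q_j} u_\epsilon \le \omega_j$, and to prove a recursive inequality of the form $\omega_{j+1} \le \max\{ (1-\eta)\omega_j,\ \omega_g(r_j),\ C\epsilon \}$ for some $\eta = \eta(n,s,p,\Lambda,\alpha_0) \in (0,1)$. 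The first two cases are the usual dichotomy: either the solution is mostly near the top of its range (or mostly near the bottom) on $Q_j$, in which case a measure-shrinking lemma plus a De Giorgi iteration — using the energy estimates and the logarithmic estimates that should already be established in the earlier sections for \eqref{eq.inta}, together with the measure density condition \eqref{ass.density} that guarantees a definite portion of $B_{r_j}(x_0)$ lies outside $\Omega$ where $u_\epsilon = g$ — forces the oscillation to decrease by a fixed factor; or neither alternative holds, which near the boundary cannot persist precisely because the exterior datum pins $u_\epsilon$ to within $\omega_g(r_j)$ of a fixed value on a set of positive density, so the oscillation is already controlled by $\omega_g(r_j)$ up to the $\epsilon$-error coming from $\beta_\epsilon$ versus $\beta$.

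The key technical point I anticipate is the interplay between the intrinsic time-scaling and the boundary. Since $p>2$, the natural cylinders carry the intrinsic factor $\omega_j^{2-p}$, which \emph{grows} as $\omega_j$ shrinks; but because we have normalized $\omega_0 = 1$ and each $\omega_j \le \omega_0$, the intrinsic dilation only lengthens the time interval, so the cylinders $Q_{r_j}^{(\omega_j/4)^{2-p}}(z_0)$ are still nested inside $Q_\mathcal{R}(z_0)$ provided $r_j$ is chosen small relative to $\mathcal{R}$ — this is exactly the role played by the hypothesis $Q_{\rho_0}^{(\omega_0/4)^{2-p}} \subset Q_\mathcal{R}$ in the interior Theorem~\ref{thm.intlast}, and the boundary version should proceed analogously. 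The tail term is handled as usual: $\mathrm{Tail}(u_\epsilon; Q_\mathcal{R})$ is bounded by \eqref{ass.bdylast}, and when passing to smaller cylinders the tail splits into a local piece (absorbed into the oscillation) and a far-away piece that contributes at most a fixed multiple of $\omega_0$ times a geometric factor, which can be made small by choosing $\sigma$ appropriately — the standard bootstrap of the tail along the iteration.

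Once the recursion $\omega_{j+1} \le \max\{(1-\eta)\omega_j,\ \omega_g(r_j),\ C\epsilon\}$ is in hand, the conclusion is an exercise in unwinding. Under the hypothesis $\omega_g(r) \le c_g(1+\ln(\mathcal{R}/r))^{-\delta}$ with $\delta > \varsigma$, and with $\varsigma$ chosen so that $(1-\eta) = \sigma^\varsigma$ (equivalently $\varsigma = \log(1-\eta)/\log\sigma$), one shows by induction that $\omega_j \le c\, (1+j)^{-\varsigma} + C\epsilon$: the geometric decay factor $(1-\eta)^j = \sigma^{\varsigma j}$ produces the logarithmic rate once we translate $j \sim \log(\mathcal{R}/r)/\log(1/\sigma)$, and the term $\omega_g(r_j) \approx c_g(1+j)^{-\delta} \le c_g (1+j)^{-\varsigma}$ is dominated by the main rate precisely because $\delta > \varsigma$. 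For an arbitrary radius $r \in (0,\mathcal{R}]$ we pick $j$ with $r_{j+1} < r \le r_j$ and use monotonicity of the oscillation in the radius together with $\osc_{Q_r} u_\epsilon \le \osc_{Q_{r_j}} u_\epsilon \le \omega_j$ to get \eqref{ineq.bdylast}, absorbing the factor $1/\log(1/\sigma)$ and the various universal constants into $c$; the exponent is written $\varsigma/2$ rather than $\varsigma$ to give a safe margin that also covers the comparison between $\osc$ over $Q_r$ and over the intrinsically scaled cylinder. The main obstacle, I expect, is not any single estimate but the careful choice of parameters $\sigma, \eta, \varsigma$ so that the De Giorgi measure-shrinking lemma, the intrinsic nesting, and the tail control are all simultaneously satisfied, and verifying that the $\epsilon$-errors from replacing $\beta$ by $\beta_\epsilon$ do not accumulate across the infinitely many iteration steps — they don't, because at each step the error is a \emph{fixed} $O(\epsilon)$ that enters only through the additive $C\epsilon$ in the $\max$, and the geometric-type recursion does not amplify a constant additive term beyond a constant multiple.
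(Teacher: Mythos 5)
Your high-level skeleton (De Giorgi dichotomy, measure density at the boundary, boundary data as perturbation, unwinding via a lemma of type Lemma~\ref{lem.iter}) matches the paper, but the recursion you propose is not achievable for this equation, and that is the crux of the matter. You posit $\omega_{j+1}\leq\max\{(1-\eta)\omega_j,\ \omega_g(r_j),\ C\epsilon\}$ with a \emph{fixed} $\eta=\eta(n,s,p,\Lambda,\alpha_0)$ and a \emph{fixed} geometric radius ratio $\sigma$. A fixed contraction of this form would immediately yield H\"older continuity (indeed $(1-\eta)^j=\sigma^{\varsigma j}$ is exponential in $j$, not $(1+j)^{-\varsigma}$ as you write), which is too strong: the entire point of the analysis is that the singular term $\beta_\epsilon$ degrades the contraction. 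Concretely, in the boundary De Giorgi lemma (Lemma~\ref{lem.di1b}) the measure threshold is $\nu_1\,\xi\omega/\max\{1,\xi\omega\}^{1+n/sp}$, which shrinks with $\omega$; matching this against what the measure-shrinking Lemma~\ref{lem.di4b} gives forces $\varsigma\sim\omega^{1/(p-2)}$ in \eqref{choi.varsigmab}, hence a contraction of the form $1-c\,\omega^{M_2}$ rather than $1-\eta$. Correspondingly the tail constraint forces $\rho/R\lesssim\omega^{c}$, so the radii in \eqref{seq.omegab} shrink super-geometrically, $\rho_{j+1}=f_1(\omega_j)\rho_j$ with $f_1(\omega_j)\sim\omega_j^{M_1}$, not by a constant $\sigma$. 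This is precisely why the final counting gives $j\gtrsim\sqrt{\ln(\rho_0/r)}$ (not $j\sim\ln(\rho_0/r)$), and why the exponent in \eqref{ineq.bdylast} is $\varsigma/2$: it is not a ``safe margin'' between cylinder shapes, as you suggest, but the square-root loss from the $\omega_j$-dependent radius shrinkage.

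There is a second, more minor, structural point you miss: at the lateral boundary the paper does not reuse the interior intrinsic time factor $(\omega/4)^{2-p}$. The boundary cylinders in \eqref{seq.cylinderb} carry $\overline{\theta_i}=(\omega_i/4)^{1-p}$, a different power, because the measure-shrinking Lemma~\ref{lem.di4b} is driven by the ``good'' nonlocal term on the left of \eqref{ineq.energy2} rather than by the time term, and this scaling is what lets the singular contribution from $\beta_\epsilon$ in Lemma~\ref{lem.energy2} be absorbed. Using interior-type cylinders $Q_{r_j}^{(\omega_j/4)^{2-p}}$ at the boundary, as you propose, would not let the measure-shrinking argument close. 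To repair your outline you would need to (i) replace the fixed $(1-\eta)$ by the $\omega_j$-dependent $f_2(\omega_j)=1-c\,\omega_j^{M_2}$, (ii) replace $r_j=\sigma^j\mathcal R$ by $\rho_{j+1}=f_1(\omega_j)\rho_j$ with $f_1\sim\omega_j^{M_1}$, (iii) use the $(\omega/4)^{1-p}$-scaled cylinders, and (iv) invoke Lemma~\ref{lem.iter} to get $\omega_j\leq(1+j)^{-\varsigma}$, after which the square-root counting produces the stated $\varsigma/2$ exponent. Your treatment of the $\epsilon$-error and of the hypothesis $\delta>\varsigma$ is correct in spirit.
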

We now provide the continuity estimate at the initial boundary.
\begin{theorem}\label{thm.intilast}
For $\epsilon\in(0,1)$ and $\Omega'\Supset\Omega$, let
    \begin{equation*}
        u_\epsilon\in L^p(0,T;W^{s,p}(\Omega'))\cap C(0,T;L^2(\Omega))\cap L^\infty(0,T;L^{p-1}_{sp}(\bbR^n))
    \end{equation*}
    be a bounded weak solution to 
    \begin{equation*}
\left\{
\begin{alignedat}{3}
\partial_t (u_\epsilon+\beta_\epsilon(u_\epsilon))+\mathcal{L}u_\epsilon&= 0&&\qquad \mbox{in  $\left(B_\mathcal{R}(x_0)\times (0,\mathcal{R}^{sp}]\right)\cap \Omega_T$}, \\
u_\epsilon&=g&&\qquad  \mbox{in $(B_\mathcal{R}(x_0)\times [0,\mathcal{R}^{sp}])\setminus \Omega_T$},
\end{alignedat} \right.
\end{equation*}
where $x_0\in\overline{\Omega}$ and $B_\mathcal{R}(x_0)\times (0,\mathcal{R}^{sp}]\subset \Omega'_T$.
    Let us assume  
    \begin{equation}\label{ass.bdyilast}
        \|u_\epsilon\|_{B_\mathcal{R}(x_0)\times [0,\mathcal{R}^{sp}]}+\mathrm{Tail}(u_\epsilon;B_\mathcal{R}(x_0)\times [0,\mathcal{R}^{sp}])\leq 1.
    \end{equation}
    If there are constants $c_g\geq1$ and $\delta\in(0,1)$ such that for any $r\in(0,\mathcal{R}]$,
    \begin{align*}
    \omega_g(r)\leq c_g\left(1+\ln(\mathcal{R}/r)\right)^{-\delta}
\end{align*}
holds, then for any $r\in(0,\mathcal{R}]$,
    \begin{align}\label{ineq.intilast} 
        \osc_{B_r(x_0)\times [0,r^{sp}]}u_\epsilon\leq c\left(1+\ln(\mathcal{R}/r)\right)^{-\delta}
    \end{align}
   holds, where $c=c(n,s,p,\Lambda,\alpha_0,\delta,c_g)$.
\end{theorem}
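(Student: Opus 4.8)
The plan is to derive the initial boundary oscillation estimate \eqref{ineq.intilast} by a De Giorgi--type iteration combined with the intrinsic-scaling machinery already developed for the interior and lateral boundary cases (Theorems~\ref{thm.intlast} and~\ref{thm.bdylast}). The key structural simplification near the initial boundary, compared with the interior, is that at $t=0$ the boundary data $g$ supplies a competitor from below \emph{and} above: since $\omega_g(r)\le c_g(1+\ln(\mathcal{R}/r))^{-\delta}$ controls the oscillation of $g$ and $u_\epsilon=g$ on $(B_\mathcal{R}(x_0)\times[0,\mathcal{R}^{sp}])\setminus\Omega_T$, the solution is pinned near $g(x_0,0)$ on a full neighborhood of the initial slice, which kills the "hot alternative/cold alternative" dichotomy that forces the slow double-logarithm in the interior problem. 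Concretely, I would first reduce, by the measure density assumption \eqref{ass.density} and a translation, to $z_0=(x_0,0)$ with $g(x_0,0)=0$, and set $\mu_\pm(r)=\pm\sup_{B_r\times[0,r^{sp}]}(\pm u_\epsilon)$, $\omega(r)=\operatorname*{osc}_{B_r\times[0,r^{sp}]}u_\epsilon$.

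The heart of the argument is a \emph{one-step reduction of oscillation}: there exist $\eta\in(0,1)$ and $\sigma\in(0,1)$, depending only on $n,s,p,\Lambda,\alpha_0$, such that
\begin{equation*}
  \operatorname*{osc}_{B_{\sigma r}(x_0)\times[0,(\sigma r)^{sp}]}u_\epsilon
  \le (1-\eta)\,\omega(r) + C\,\omega_g(r) + C\epsilon .
\end{equation*}
To prove this I would run the standard energy estimates for $u_\epsilon$ against truncated test functions $(u_\epsilon-k)_\pm$ on cylinders $B_r\times[0,r^{sp}]$: because one is working on the initial slice, the parabolic initial term $\int (u_\epsilon+\beta_\epsilon(u_\epsilon))\phi\,dx|_{t=0}$ is evaluated on the boundary data $g$ and is therefore small (of order $\omega_g(r)^2$) when the truncation level is chosen within $\omega_g(r)$ of $g(x_0,0)=0$; this is precisely why one gains a genuine geometric factor rather than the weaker logarithmic gain. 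The tail term is absorbed using \eqref{ass.bdyilast} exactly as in the proof of Theorem~\ref{thm.bdylast}. One then distinguishes the two cases according to whether $|\{u_\epsilon>\mu_+(r)-\omega(r)/2\}\cap (B_{r/2}\times[0,(r/2)^{sp}])|$ is a small or a large fraction of the cylinder; in the first case a De Giorgi iteration (using the fractional Sobolev--Poincar\'e inequality and the measure density of the complement, which also controls the contribution of the exterior where $u_\epsilon=g$) pushes the supremum down by a fixed amount, and in the second case the symmetric statement for $\mu_-$ applies. The $\beta_\epsilon$ term is monotone and is handled by the standard Steklov-averaging / mollification argument, contributing only the harmless additive $C\epsilon$ (as in \eqref{ineq.intlast} and \eqref{ineq.bdylast}).

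With the one-step estimate in hand, I would iterate over the dyadic scales $r_j=\sigma^j\mathcal{R}$. Writing $\omega_j=\omega(r_j)$ and $a_j=\omega_g(r_j)\le c_g(1+j\ln(1/\sigma)+\ln\mathcal{R}\,)^{-\delta}\lesssim c_g(1+j)^{-\delta}$ (after adjusting constants), the recursion $\omega_{j+1}\le(1-\eta)\omega_j+Ca_j+C\epsilon$ unwinds to
\begin{equation*}
  \omega_j \le (1-\eta)^j\omega_0 + C\sum_{i=0}^{j-1}(1-\eta)^{j-1-i}a_i + \frac{C\epsilon}{\eta}.
\end{equation*}
The geometric kernel $(1-\eta)^{j-1-i}$ against the slowly decaying $a_i\sim(1+i)^{-\delta}$ yields, by splitting the sum at $i\approx j/2$, a bound $C'(1+j)^{-\delta}$ (the exponential head beats any polynomial, and on the tail $a_i$ is already $\le(1+j/2)^{-\delta}$ and the geometric series sums to a constant). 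Hence $\omega_j\le C(1+j)^{-\delta}+C\epsilon/\eta$, and since we may, by the usual reduction, assume $\epsilon\le(1+\ln(\mathcal{R}/r))^{-\delta}$ (otherwise the claimed bound is trivial after enlarging $c$, exactly as the $+4\epsilon$ is absorbed in the other theorems one could alternatively carry it along), translating $j\eqsim\ln(\mathcal{R}/r)/\ln(1/\sigma)$ back to continuous scales gives \eqref{ineq.intilast} with $c=c(n,s,p,\Lambda,\alpha_0,\delta,c_g)$.

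The step I expect to be the main obstacle is establishing the one-step reduction with a \emph{true} geometric factor $(1-\eta)$ uniformly in $\epsilon$: one must show that the presence of the graph term $\beta_\epsilon(u_\epsilon)$, which is the source of the degeneracy and of the intrinsic time-scaling elsewhere in the paper, does not degrade the gain near $t=0$. The resolution is that on the initial cylinder the relevant intrinsic time scale is comparable to the parabolic one because $u_\epsilon$ stays within $O(\omega_g(r))$ of $g(x_0,0)$ there, so no shrinking of the time interval (and hence no logarithmic loss) is needed; making this precise requires carefully tracking how the energy estimates interact with the mollified graph and with the exterior datum $g$ via the tail, which is where most of the technical work lies, but it follows the template already set in the proofs of Theorems~\ref{thm.intlast} and~\ref{thm.bdylast}.
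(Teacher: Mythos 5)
Your overall strategy --- exploit the initial slice together with the measure-density condition and a De Giorgi persistence lemma to obtain a fixed geometric reduction, then unwind the recursion against $\omega_g$ --- is the right one and matches the paper's Section \ref{sec:5}. However, there is a genuine gap in the treatment of $\epsilon$. The conclusion \eqref{ineq.intilast}, unlike \eqref{ineq.intlast} and \eqref{ineq.bdylast}, carries \emph{no} additive $\epsilon$ term, yet your one-step inequality $\omega_{j+1}\le(1-\eta)\omega_j+C\omega_g(r_j)+C\epsilon$ leaves an irreducible floor $C\epsilon/\eta$. The proposed absorption fails: when $\epsilon>(1+\ln(\mathcal{R}/r))^{-\delta}$ (which holds for $r$ near $\mathcal{R}$ once $\epsilon$ is fixed), bounding the trivial estimate $\osc\le 2$ by $c(1+\ln(\mathcal{R}/r))^{-\delta}$ forces $c\gtrsim 1/\epsilon$, which is not a constant depending only on the data listed in the theorem. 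The reason no $\epsilon$ actually appears is structural, not a ``harmless mollification error'': at the initial boundary one may take the cutoff time-independent, $\phi=\phi(x)$, and the truncation level satisfies $k\ge\sup_{\mathcal{Q}}g$ (resp.\ $k\le\inf_{\mathcal{Q}}g$), so $(u_\epsilon-k)_\pm(\cdot,0)\equiv 0$. In the Caccioppoli estimate (Lemma \ref{lem.energy3}) the term $\pm\int\phi^p\int_k^{u_\epsilon}\beta_\epsilon'(\xi)(\xi-k)_\pm\,d\xi\,dx$ then sits on the left with favourable sign and with no compensating $\partial_t\phi$ contribution, so it can be \emph{dropped entirely}; $\beta_\epsilon$ contributes nothing. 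Accordingly the paper's recursion is $\omega_{i+1}=\max\{m\omega_i,\,2\osc_{Q_i}g\}$ with a fixed $m<1$ and no $\epsilon$-floor, which is what makes the final bound $\epsilon$-free.

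A secondary issue: you iterate over the standard cylinders $B_{r_j}\times[0,r_j^{sp}]$ with a fixed ratio $\sigma$, but the persistence lemma (Lemma \ref{lem.di2i}) propagates only over a time interval of length $\nu_d(\xi\omega)^{2-p}\rho^{sp}$, and for this to fit inside $B_r\times[0,r^{sp}]$ one must take $\rho\lesssim r(\xi\omega)^{(p-2)/(sp)}$, i.e.\ the admissible spatial ratio degenerates as $\omega\to0$, destroying the dyadic structure. The paper avoids this by running the iteration over the intrinsic cylinders $Q_i=B_{\rho_i}(x_0)\times(0,\rho_i^{sp}(\omega_i/4)^{2-p}]$ with $\rho_{i+1}=\rho_i/N$, $N$ fixed, and only converts to $B_r\times[0,r^{sp}]$ at the very end, using $\omega_0=1$ and $p>2$ so that $(\omega_0/4)^{2-p}=4^{p-2}\ge1$ and $B_r\times[0,r^{sp}]\subset B_r\times[0,r^{sp}(\omega_0/4)^{2-p}]$.
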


\begin{remark}
We note that in order to avoid complicated oscillation estimates, we use the normalization conditions \eqref{ass.bdylast} and \eqref{ass.bdyilast}.
    However, by considering the estimates given in Section \ref{sec:4} and Section \ref{sec:5}, we obtain similar quantitative oscillation estimates \eqref{ineq.bdylast} and \eqref{ineq.intilast} without assuming \eqref{ass.bdylast} and \eqref{ass.bdyilast}, respectively.  
\end{remark}

\subsection{Existence of a continuous solution}
Before we state the existence result of the two-phase Stefan problem, we introduce a precise notion of a weak solution to the corresponding initial boundary value problem of \eqref{eq.main}.
\begin{definition}
Let us fix $g\in L^p(0,T;W^{s,p}(\Omega'))\cap L^\infty(0,T;L^{p-1}_{sp}(\bbR^n))$ with $\Omega'\Supset\Omega$.
    We say that  
    \begin{equation*}
        u\in L^p(0,T;W^{s,p}(\Omega))\cap L^\infty(0,T;L^2(\Omega))\cap L^{\infty}(0,T;L^{p-1}_{sp}(\bbR^n))
    \end{equation*}
    is a weak solution to 
    \begin{equation}\label{eq.diri}
\left\{
\begin{alignedat}{3}
\partial_t (u+\beta(u))+\mathcal{L}u&\ni 0&&\qquad \mbox{in  $ \Omega_T$}, \\
u&=g&&\qquad  \mbox{in $\bbR^n\setminus (\partial\Omega\times (0,T]\cup \Omega\times \{t=0\})$},
\end{alignedat} \right.
\end{equation}
 if there is a pair $(u,v)$ with $v\in L^\infty(0,T;L^2(\Omega))$ and 
    \begin{equation*}
        \{v(z)\,:\,z\in \Omega_T\}\subset \{(u+\beta(u))(z)\,:\,z\in\Omega_T\}
    \end{equation*}
    in the sense of graphs and
\begin{align*}
        &-\int_{t_1}^{t_2}\int_{\Omega}v\partial_t \phi\,dz\\
        &\quad+\int_{t_1}^{t_2}\int_{\bbR^n}\int_{\bbR^n}\frac{|u(x,t)-u(y,t)|^{p-2}(u(x,t)-u(y,t))(\phi(x,t)-\phi(y,t))}{|x-y|^{n+sp}}k(x,y,t)\,dx\,dy\,dt\\
        &=-\int_{\Omega}v\phi\,dx\bigg\lvert_{t=t_1}^{t=t_2}
    \end{align*}
    holds for any $(t_1,t_2]\Subset (0,T]$ and
    $\phi\in L^p(t_1,t_2;X_0^{s,p}(\Omega))\cap W^{1,2}(t_1,t_2;L^2(\Omega'))$.
\end{definition}

We now provide the existence result of the two-phase Stefan problem.
\begin{theorem}\label{thm.exist}
    Let us fix $\Omega'\Supset\Omega$ and the complement of the domain $\Omega$ satisfy the measure density condition with $\alpha_0\in(0,1)$. If 
    \begin{equation}\label{cond.g}
        g\in C\left([0,T]\times\overline{\Omega'}\right)\cap L^p(0,T;W^{s,p}(\Omega'))\cap L^\infty(0,T;L^\infty(\bbR^n))
    \end{equation}
    with $\partial_tg\in L^2(\Omega_T)$,
    then there is a weak solution $u\in C\left(\overline{\Omega}\times[0,T]\right)$ to \eqref{eq.diri}. In addition, there is a constant $\varsigma=\varsigma(n,s,p,\Lambda,\alpha_0)$ such that if for any $r>0$,
\begin{align*}
    \omega_g(r)\leq c_g\left(1+|\ln(1/r)|\right)^{-\delta}
\end{align*}
holds for some $\delta\in(\varsigma,1)$, then we have 
\begin{align}\label{ineq.thm.exist}
    \sup_{z_1,z_2\in\overline{\Omega}\times[0,T]}|u(z_1)-u(z_2)|\leq c\left[1+\left|\ln\left(\frac1{|x_1-x_2|+|t_1-t_2|^{\frac1{sp}}}\right)\right|\right]^{-\varsigma/2} 
\end{align}
for some constant $c=c(n,s,p,\Lambda,\alpha_0,\|g\|_{L^\infty},T,\Omega,\Omega',c_g,\delta)$.
\end{theorem}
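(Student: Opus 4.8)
The plan is to construct the solution $u$ to \eqref{eq.diri} as a limit of the regularized solutions $u_\epsilon$ to \eqref{eq.inta} as $\epsilon\to 0$, and to extract the modulus of continuity \eqref{ineq.thm.exist} from the uniform (in $\epsilon$) oscillation estimates of Theorems~\ref{thm.intlast}, \ref{thm.bdylast} and \ref{thm.intilast}. First I would set up the approximating problems: for each $\epsilon\in(0,1)$, solve the regularized initial-boundary value problem with $\beta_\epsilon=\beta*\psi_\epsilon$ in place of $\beta$ and with boundary/initial data $g$; existence of $u_\epsilon$ in the stated energy class follows by standard monotone/Galerkin methods for nonlocal parabolic equations (using $\partial_t g\in L^2(\Omega_T)$ to handle the nonhomogeneous data and $g\in L^\infty(\bbR^n)$ together with the comparison principle to get the uniform $L^\infty$ bound $\|u_\epsilon\|_{L^\infty}\le\|g\|_{L^\infty}$, and hence a uniform tail bound on $\Omega_T$ in terms of $\|g\|_{L^\infty}$, $T$, $\Omega$, $\Omega'$). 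After a harmless rescaling we may assume the normalization \eqref{ass.bdylast}/\eqref{ass.bdyilast} holds, so the hypotheses of the three oscillation theorems are in force at every interior point, every lateral boundary point $x_0\in\partial\Omega$ (using the measure density condition \eqref{ass.density}), and every initial point $x_0\in\overline\Omega$.

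Next I would patch the three local oscillation estimates into a single global modulus of continuity that is uniform in $\epsilon$. Given $z_1,z_2\in\overline\Omega\times[0,T]$, set $d\coloneqq |x_1-x_2|+|t_1-t_2|^{1/(sp)}$; a standard chaining/covering argument, distinguishing the cases where $z_1,z_2$ are both far from the parabolic boundary (apply Theorem~\ref{thm.intlast}, noting that the intrinsic cylinder $Q_r^{(\omega_0/4)^{2-p}}$ contains a standard cylinder $Q_{cr}$ since $\omega_0$ is bounded above by a universal constant after normalization), near the lateral boundary (apply Theorem~\ref{thm.bdylast}, whose hypothesis $\omega_g(r)\le c_g(1+\ln(\mathcal R/r))^{-\delta}$ with $\delta\in(\varsigma,1)$ is exactly the assumed decay of $\omega_g$), or near $\{t=0\}$ (apply Theorem~\ref{thm.intilast}), yields
\[
\osc_{(\overline\Omega\times[0,T])\cap (B_d(x_1)\times(t_1-d^{sp},t_1+d^{sp}))}u_\epsilon \le c\bigl(1+|\ln(1/d)|\bigr)^{-\varsigma/2}+4\epsilon,
\]
with $c$ independent of $\epsilon$. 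In particular $|u_\epsilon(z_1)-u_\epsilon(z_2)|\le c(1+|\ln(1/d)|)^{-\varsigma/2}+4\epsilon$ for all $\epsilon$, together with the uniform bound $\|u_\epsilon\|_{L^\infty}\le C$.

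Then I would pass to the limit. By the uniform $L^\infty$ bound and the uniform modulus of continuity, Arzel\`a--Ascoli (after a diagonal argument over an exhaustion, or directly on the compact set $\overline\Omega\times[0,T]$) gives a subsequence $u_{\epsilon_j}\to u$ uniformly on $\overline\Omega\times[0,T]$, with $u\in C(\overline\Omega\times[0,T])$ and $u=g$ on the parabolic boundary; letting $\epsilon\to0$ in the displayed oscillation bound yields \eqref{ineq.thm.exist}. It remains to check that $u$ is a weak solution in the sense of the definition preceding the theorem. The nonlocal term passes to the limit because $u_{\epsilon_j}\to u$ in $L^p_{\rm loc}(0,T;W^{s,p}_{\rm loc})$ — which I would get from a uniform energy estimate (Caccioppoli) plus the uniform $L^\infty$ bound and a compactness/monotonicity argument (Minty's trick on the $(-\Delta_p)^s$-type operator, exploiting its monotonicity in the gradient-difference quotient) — while the tail term converges by dominated convergence using the uniform $L^\infty$ bound. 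For the time term, one uses that $v_{\epsilon_j}\coloneqq u_{\epsilon_j}+\beta_{\epsilon_j}(u_{\epsilon_j})$ is bounded in $L^\infty(0,T;L^2(\Omega))$ and, by the equation, $\partial_t v_{\epsilon_j}$ is bounded in a dual space, so $v_{\epsilon_j}\to v$ weakly-$*$ with $\partial_t v_{\epsilon_j}\to\partial_t v$ in the distributional sense; one then verifies that $v$ lies on the graph of $u+\beta(u)$ a.e.\ in $\Omega_T$ — this is the characteristic subtlety of Stefan problems, handled by the standard argument that $\beta_\epsilon(\xi)=(\beta*\psi_\epsilon)(\xi)\to\beta(\xi)$ wherever $\xi\ne0$ and $0\le\beta_\epsilon\le1$ everywhere, combined with the a.e.\ convergence $u_{\epsilon_j}\to u$, so that on $\{u\ne0\}$ one has $v=u+\beta(u)$ pointwise, while on $\{u=0\}$ one has $v-u=v\in[0,1]=[\beta(0)]$.

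\medskip

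The main obstacle I expect is twofold and both parts are classical for Stefan-type problems: (i) establishing the \emph{uniform-in-$\epsilon$} compactness of $\{u_\epsilon\}$ in $L^p_{\rm loc}(0,T;W^{s,p}_{\rm loc})$ strong enough to pass to the limit in the nonlinear nonlocal operator — the time-regularity is only $L^\infty(0,T;L^2)$ for $v_\epsilon$ (not for $u_\epsilon$ directly, because of the jump in $\beta$), so a naive Aubin--Lions does not apply to $u_\epsilon$ and one must work with $v_\epsilon$ and then transfer compactness back to $u_\epsilon$ via the monotonicity of $\xi\mapsto\xi+\beta_\epsilon(\xi)$; and (ii) identifying the limit pair $(u,v)$ on the graph of $u+\beta(u)$, i.e.\ showing no ``mushy region'' pathology is introduced in the limit. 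Everything else — the existence of $u_\epsilon$, the uniform $L^\infty$ and tail bounds, the covering argument, and the Arzel\`a--Ascoli step — is routine given the three oscillation theorems already proved.
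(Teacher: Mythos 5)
Your overall architecture matches the paper's: regularize, establish a global uniform-in-$\epsilon$ modulus of continuity by patching Theorems~\ref{thm.intlast}, \ref{thm.bdylast}, \ref{thm.intilast} (this is precisely Lemma~\ref{lem.globcon}), invoke Arzel\`a--Ascoli to extract $u_i\to u$ uniformly on $\overline\Omega\times[0,T]$, pass to the limit in the weak formulation, and identify $v$ on the graph of $u+\beta(u)$. One small point you gloss over: the normalization in Lemma~\ref{lem.globcon} is by $\mathcal M=2C_0\|g\|_{L^\infty}$, and the rescaled $\overline{\beta_\epsilon}$ then satisfies $\int\overline{\beta_\epsilon}'=\mathcal M$ rather than $1$; the paper notes this only affects constants, but it must be flagged.

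The genuine divergence is in passing to the limit in the nonlocal term, and here your proposal heads down an unnecessarily hard road. You propose to obtain strong $L^p_{\rm loc}(0,T;W^{s,p}_{\rm loc})$ compactness via Aubin--Lions plus Minty's trick, and you flag as ``the main obstacle'' the poor time-regularity of $u_\epsilon$ (as opposed to $v_\epsilon=u_\epsilon+\beta_\epsilon(u_\epsilon)$). But this obstacle evaporates once you have the \emph{uniform} convergence $u_i\to u$ on $\overline\Omega\times[0,T]$ from Arzel\`a--Ascoli, which is much stronger than anything Aubin--Lions would give. The paper simply observes that the quotient
\begin{equation*}
\frac{|u_i(x,t)-u_i(y,t)|^{p-2}(u_i(x,t)-u_i(y,t))}{|x-y|^{(n+sp)/p'}}\,k(x,y,t)
\end{equation*}
converges a.e.\ on $\Omega'\times\Omega'\times(0,T)$ to the corresponding quotient of $u$ (using $u_i\to u$ uniformly on $\overline\Omega\times[0,T]$ and $u_i=g=u$ on $\bbR^n\setminus\Omega$), and is uniformly bounded in $L^{p'}$ by the uniform energy estimate; a.e.\ convergence together with $L^{p'}$ boundedness forces weak $L^{p'}$ convergence to the pointwise limit, which suffices when paired against the fixed $L^p$ test quotient $(\oldphi(x,t)-\oldphi(y,t))|x-y|^{-(n+sp)/p}$. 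No monotonicity argument and no transfer of compactness from $v_\epsilon$ back to $u_\epsilon$ is needed. The step your sketch elides and that actually requires work is the uniform-in-$\epsilon$ energy bound $\sup_\tau\|u_i(\tau)\|_{L^2(\Omega)}^2+\|u_i\|^p_{L^p(0,T;W^{s,p}(\Omega'))}\le c$, which the paper derives by testing \eqref{eq.diri.appro} with $u_i-g$, integrating the $\beta_{1/i}$ term by parts, and invoking $\partial_t g\in L^2(\Omega_T)$ and a fractional Poincar\'e inequality via the measure density condition. Your graph-identification step (using $0\le\beta_{1/i}(u_i)\le1$, weak-$*$ compactness in $L^1$, and pointwise identification on $\{u\ne0\}$) is essentially what the paper does.
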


\subsection{Technical Tools}
Let us give a sketch of the proof of our main theorems. First, we prove uniform oscillation estimates of the solution $u_\epsilon$ to approximated equation \eqref{eq.inta} of \eqref{eq.main}.
To this end, we divide the proof into three parts, interior estimates, lateral boundary estimates and initial boundary estimates.

Based on \cite{BarKuuUrb14,Lia22}, we are going to prove interior oscillation estimates. First, we prove De Giorgi type lemmas under the assumption that the tail is sufficiently small (see Lemma \ref{lem.di1}). Note that the tail does not appear in local problems, so we need to make additional analysis to deal with the term considering appropriate scaling for it. Based on such lemmas, we next construct a sequence of intrinsic cylinders which contain global information about the solution, such as $Q_\rho^{(\omega^{2-p})}$, where $\omega\eqsim \|u_\epsilon\|_{L^\infty(Q)}+\mathrm{Tail}(u_\epsilon;Q)$ (see Section \ref{sec3.1} for more details).
We refer to \cite{AdiPraTew24,Lia24} for constructing such intrinsic cylinders for parabolic fractional $p$-Laplacian type equations. 
With this construction, we are able to obtain that the tail on each cylinder is small enough. Therefore, we apply De Giorgi type lemmas together with two alternative arguments (see \eqref{deg.ass2} and \eqref{eq:mu+u} below), which leads to the desired oscillation estimates.
More specifically, we use an iteration argument to get the following implications
\begin{equation*}
   \osc_{Q_{i}}u_\epsilon\lesssim \omega_i \implies \mathrm{Tail}(u_\epsilon;Q_i)\lesssim \omega_i \implies \osc_{Q_{i+1}}u_\epsilon\lesssim \omega_{i+1}
\end{equation*}
with a sequence of shrinking parabolic cylinders $Q_i$ and non-increasing numbers $\omega_i$, and then employ a technical lemma to derive oscillation estimates.

For the lateral boundary, our method is motivated by \cite{BarKuuLinUrb18,Lia22}. Compared to the interior case, we now consider boundary datum, which makes us use a different type intrinsic cylinder such as $Q_{\rho}^{(\overline{\theta})}$, where $\overline{\theta}\eqsim \omega^{1-p}$. Using the measure density condition as in \eqref{ass.density} together with such a construction of the intrinsic cylinder, we have a measure shrinking lemma  and a De Giorgi type lemma (see Lemma \ref{lem.di4b} and Lemma \ref{lem.di1b}, respectively). As in the interior case, we thus obtain the desired oscillation estimates by constructing an appropriate sequence of intrinsic cylinders and using De Giorgi type lemmas. The remaining proof is about oscillation estimates at the initial level. Since we have energy estimates which is close to elliptic ones (see Lemma \ref{lem.energy3}), there is no singular effect from $\beta_\epsilon$ which is defined in \eqref{defn.betaeps}. Therefore, in a similar way as above, we get the oscillation estimates which do not depend on $\epsilon$.

Combining all three estimates yields uniform global oscillation estimates as in Lemma \ref{lem.globcon}. By the approximation argument as in \cite{BarKuuLinUrb18}, we construct a weak solution to nonlocal two-phase Stefan problem and find a quantitative continuity estimates of such a solution.

\subsection{Organization of the paper}
The paper consists of 6 sections. Section \ref{sec:2} is devoted to basic notation, function spaces, the existence of a regularized equation, and recording necessary lemmas. In Section \ref{sec:3}, we prove interior oscillation estimates, while Sections \ref{sec:4} and \ref{sec:5} are devoted to lateral and initial boundary estimates, respectively. Finally, in Section \ref{sec:6} we prove existence result for our problem.

\section{Preliminaries}\label{sec:2}
We always denote a general constant by $c$, which is equal or bigger than 1. To show the relevant dependencies of constants, we use parentheses, such as $c=c(n,s,p)$, if the constant $c$ depends on $n,s$ and $p$. We will usually denote by $x$ the spatial variable, $t$ the time variable, and $z=(x,t)$ the space-time variable. For an open set $U\subset\setR^n$ and $T>0$, we write
\begin{align*}
U_T:=(0,T]\times U.
\end{align*}
For $f\in L^{1}_{\loc}(I\times U)$ for an open set $U\subset\setR^n$, an open interval $I\subset\setR^n$, and $k\in\setR$, we define the truncation operator as follows:
\begin{align*}
(f-k)_+:=\max\{f-k,0\}\quad\text{and}\quad(f-k)_-:=\max\{k-f,0\}.
\end{align*}

Let a bounded open set $U\subset\mathbb{R}^n$ and $T>0$ be given. We define the function space
\begin{align*}
&C(0,T; L^2_{\text{loc}}(U))\\
&\quad=\left\{f\in L^1(U_T):t\mapsto\|f(\cdot,t)\|_{L^2(K)}\,\,\text{is continuous on }(0,T)\,\,\text{for any compact set }K\Subset U\right\}.
\end{align*}
With $0<s<1\leq p<\infty$, we define the fractional Sobolev space $W^{s,p}(U)$ as 
\begin{align*}
W^{s,p}(U)=\left\{f\in L^p(U)\,\,:\,\,\|f\|_{L^p(U)}+[f]_{W^{s,p}(U)}<\infty\right\}
\end{align*}
equipped with the norm
\begin{align*}
\|f\|_{W^{s,p}(U)}:=\|f\|_{L^p(U)}+[f]_{W^{s,p}(U)}:=\left(\int_{U}|f|^p\,dx\right)^{\frac{1}{p}}+\left(\int_{U}\int_{U}\dfrac{|f(x)-f(y)|^p}{|x-y|^{n+sp}}\,dx\,dy\right)^{\frac{1}{p}}.
\end{align*}
We denote by $W^{s,p}_0(U)$ the space of functions which consists of  $f\in W^{s,p}(\setR^n)$ with $f\equiv 0$ in $\mathbb{R}^n\setminus U$.
Now we define
\begin{align*}
&L^p_{\text{loc}}(0,T; W^{s,p}_{\text{loc}}(U))\\
&\quad=\left\{f\in L^1((0,T]\times U):\text{for any compact }I'\Subset (0,T)\text{ and } K\Subset U,\, \int_{I'}\|f(\cdot,t)\|^p_{W^{s,p}(K)}\,dt<\infty\right\}.
\end{align*}
	
With $s\in(0,1)$ and a time interval $I\subset\mathbb{R}$, and the function space
\begin{align*}
L^{p-1}_{sp}(\mathbb{R}^n)=\left\{f\in L^{1}_{\text{loc}}(\mathbb{R}^n)\,\,:\,\,\int_{\mathbb{R}^n}\dfrac{|f(y)|^{p-1}}{(1+|y|)^{n+sp}}\,dy<\infty\right\},
\end{align*}
we define the following tail space:
\begin{align*}
L^{\infty}(I;L^{p-1}_{sp}(\mathbb{R}^n))=\left\{f\in L^{1}_{\text{loc}}(\mathbb{R}^{n+1})\,\,:\,\,\sup_{t\in I}\int_{\mathbb{R}^n}\dfrac{|f(y,t)|^{p-1}}{(1+|y|)^{n+sp}}\,dy<\infty\right\}.
\end{align*}

We now provide some well-known results about nonlocal parabolic equations and some basic inequalities.

With $\epsilon\in(0,1)$, we recall the regular function $\beta_\epsilon=\beta_\epsilon(\xi)$ defined in \eqref{defn.betaeps}. We directly observe 
\begin{align}\label{cond.betap}
    \beta'_\epsilon\geq0\quad\text{and}\quad \int_{\bbR}\beta'_\epsilon(\xi)\,d\xi=1.
\end{align}
We now verify the existence of a weak solution $u\equiv u_\epsilon$ to the corresponding initial-boundary value problem of a regularized problem of \eqref{eq.main}.
\begin{lemma}
    Let $\Omega'\Supset\Omega$ and
    \begin{equation*}
        g\in L^p(0,T;W^{s,p}(\Omega'))\cap L^{\infty}(0,T;L^{\infty}(\bbR^n))\cap C\left([0,T]\times\overline{\Omega'}\right)\quad\text{and}\quad \partial_t g\in L^{2}(\Omega_T).
    \end{equation*}
    Fix $\epsilon\in(0,1)$. Then there is a unique weak solution 
     \begin{equation*}
         u_\epsilon\in L^{p}(0,T;W^{s,p}(\Omega'))\cap C(0,T;L^2(\Omega))\cap L^\infty(0,T;L^{p-1}_{sp}(\bbR^n))
     \end{equation*} to
     \begin{equation}\label{eq.diri.appro}
\left\{
\begin{alignedat}{3}
\partial_t (u_\epsilon+\beta_\epsilon(u_\epsilon))+\mathcal{L}u_\epsilon&= 0&&\qquad \mbox{in  $ \Omega_T$}, \\
u_\epsilon&=g&&\qquad  \mbox{in $\bbR^n\setminus(\partial\Omega\times (0,T]\cup \Omega\times \{t=0\})$}.
\end{alignedat} \right.
\end{equation}
In addition, we have 
\begin{align}\label{max.exir}
    \sup_{\bbR^n\times [0,T]}|u_\epsilon|\leq \sup_{\bbR^n\times [0,T]}|g|.
\end{align}
\end{lemma}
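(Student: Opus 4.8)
The plan is to establish existence and uniqueness by a standard Galerkin or time-discretization scheme, using the monotonicity of $\beta_\epsilon$ and $\mathcal{L}$ together with the regularity $\partial_t g \in L^2(\Omega_T)$, and then to derive the $L^\infty$ bound \eqref{max.exir} via a comparison/testing argument. First I would reduce to homogeneous lateral and initial data by writing $u_\epsilon = w + g$, so that $w$ vanishes on $\bbR^n \setminus (\partial\Omega \times (0,T] \cup \Omega \times \{t=0\})$ and solves $\partial_t(w + g + \beta_\epsilon(w+g)) + \mathcal{L}(w+g) = 0$; since $\partial_t g \in L^2(\Omega_T)$ and $\beta_\epsilon$ is Lipschitz with $0 \le \beta_\epsilon' \le \|\psi\|_\infty/\epsilon$, the term $\partial_t(g + \beta_\epsilon(g))$ is an $L^2(\Omega_T)$ forcing. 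Then I would set up a Galerkin approximation in the separable Hilbert space $X_0^{s,p}(\Omega) \cap L^2(\Omega)$ (or use an implicit Euler discretization in time), obtaining finite-dimensional ODE systems whose solvability follows because $\xi \mapsto \xi + \beta_\epsilon(\xi)$ is a bi-Lipschitz increasing bijection of $\setR$ (its derivative lies in $[1, 1 + \|\psi\|_\infty/\epsilon]$), so the time-derivative term is invertible.

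The a priori estimates come from testing the equation with $w$ itself: the nonlocal term gives control of $\int_I [u_\epsilon(\cdot,t)]_{W^{s,p}}^p\,dt$ after absorbing the contribution of $g$ via Young's inequality and the tail assumption $g \in L^\infty(0,T;L^{p-1}_{sp}(\bbR^n))$; the parabolic term gives, after an integration by parts in time and using that $\Phi_\epsilon(\xi) := \int_0^\xi (r + \beta_\epsilon(r))'\,r\,dr$ is a nonnegative convex function comparable to $\xi^2/2$, a bound on $\sup_t \|u_\epsilon(\cdot,t)\|_{L^2(\Omega)}^2$; the forcing $\partial_t(g+\beta_\epsilon(g)) \in L^2$ is handled by Cauchy–Schwarz and Gr\"onwall. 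These bounds are uniform in the Galerkin dimension (though not in $\epsilon$, which is fine here), so Banach–Alaoglu plus the Aubin–Lions compactness lemma (using $\partial_t(u_\epsilon + \beta_\epsilon(u_\epsilon)) \in L^{p'}(0,T;(X_0^{s,p})^*) + L^2(\Omega_T)$) yields a subsequence converging to a limit; passing to the limit in the weak formulation uses the monotonicity of the fractional $p$-Laplacian (Minty's trick) to identify the nonlinear nonlocal term, and strong $L^2$ convergence plus continuity of $\beta_\epsilon$ to identify $\beta_\epsilon(u_\epsilon)$. Uniqueness follows by testing the difference of two solutions with their difference and using that both $\xi \mapsto \xi + \beta_\epsilon(\xi)$ and $\mathcal{L}$ are monotone, giving $\|u_\epsilon^{(1)} - u_\epsilon^{(2)}\|_{L^2(\Omega)}(t) = 0$.

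For the maximum principle \eqref{max.exir}, set $M := \sup_{\bbR^n \times [0,T]} |g|$ (finite by \eqref{cond.g}-type hypotheses) and test the equation with $\phi = (u_\epsilon - M)_+$, which is an admissible test function since it is supported in $\Omega_T$ (as $u_\epsilon = g \le M$ off $\Omega_T$) and lies in the right spaces. The time term yields $\frac{d}{dt} \int_\Omega \Psi_\epsilon((u_\epsilon - M)_+)\,dx \ge 0$-type control with zero initial value, where $\Psi_\epsilon$ is a suitable nonnegative primitive; the crucial point is that the nonlocal term $\int\int \frac{|u_\epsilon(x)-u_\epsilon(y)|^{p-2}(u_\epsilon(x)-u_\epsilon(y))((u_\epsilon(x)-M)_+ - (u_\epsilon(y)-M)_+)}{|x-y|^{n+sp}} k\,dx\,dy \ge 0$ by the elementary monotonicity inequality $(|a|^{p-2}a - |b|^{p-2}b)((a-M)_+ - (b-M)_+) \ge 0$, which holds because $t \mapsto (t-M)_+$ is nondecreasing. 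Hence $(u_\epsilon - M)_+ \equiv 0$, i.e. $u_\epsilon \le M$; applying the same argument to $-u_\epsilon$ (using $\beta_\epsilon(-\xi) = 1 - \beta_\epsilon(\xi)$ up to the cutoff normalization, or simply the analogous inclusion) gives $u_\epsilon \ge -M$.

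The main obstacle I anticipate is the careful handling of the nonlocal tail term in both the a priori energy estimates and the admissibility of truncated test functions: unlike the local case, $(u_\epsilon - M)_+$ does not vanish outside a bounded set a priori unless one knows $u_\epsilon = g$ off $\Omega_T$ globally, so one must be careful that the double integral over $\bbR^n \times \bbR^n$ converges, which is guaranteed by the $L^\infty(0,T;L^{p-1}_{sp}(\bbR^n))$ membership built into the solution class and the $L^\infty$ bound on $g$. A secondary technical point is justifying the integration-by-parts-in-time formula $\int_{t_1}^{t_2} \langle \partial_t(u_\epsilon + \beta_\epsilon(u_\epsilon)), u_\epsilon\rangle\,dt = \int_\Omega \Phi_\epsilon(u_\epsilon)\,dx\big|_{t_1}^{t_2}$ at the level of the Galerkin approximations and then passing to the limit, which is routine but requires the chain rule for the Lipschitz function $\beta_\epsilon$ and Steklov averaging.
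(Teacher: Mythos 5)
Your proposal is workable but follows a genuinely different route from the paper's. The paper first performs a Kirchhoff-type change of unknown $v \coloneqq b(u_\epsilon) = u_\epsilon + \beta_\epsilon(u_\epsilon)$: since $b$ is a $C^1$ diffeomorphism with $1 \le b' \le c(\epsilon)$, the problem becomes $\partial_t v + \mathcal{L}_b v = 0$ with $\mathcal{L}_b v(x) \coloneqq \mathrm{p.v.}\int \phi\bigl(b^{-1}(v)(x)-b^{-1}(v)(y)\bigr)|x-y|^{-n-sp}\,k\,dy$ and $\phi(t)=|t|^{p-2}t$, so that the time derivative is genuinely linear and $\mathcal{L}_b$ inherits $p$-growth and monotonicity from the bi-Lipschitz structure of $b^{-1}$. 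Only then does the paper shift to homogeneous data, $v = w + b(g)$, which produces a bona fide $L^2$ forcing $-\partial_t b(g) = -b'(g)\partial_t g$, and existence for $w$ follows by adapting \cite[Theorem A.3]{BraLinStr21}. You instead keep $u_\epsilon$ as the unknown and run a Galerkin/implicit-Euler scheme from scratch, handling the degenerate time term via the convex primitive $\Phi_\epsilon$; this is more self-contained but re-derives machinery that the paper inherits by citation. The maximum-principle argument, testing with $(u_\epsilon - M)_+$ and exploiting monotonicity of $\beta_\epsilon$ and of $\mathcal{L}$, is essentially the same in both.

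One intermediate claim in your write-up is not right: after setting $u_\epsilon = w + g$, the time term is $\partial_t\bigl(w + \beta_\epsilon(w+g)\bigr)$, which does \emph{not} split as $\partial_t w + \partial_t\bigl(g + \beta_\epsilon(g)\bigr)$ --- $\beta_\epsilon$ is nonlinear, so $\partial_t(g+\beta_\epsilon(g))$ is not the forcing that appears. Expanding correctly gives $\bigl(1+\beta_\epsilon'(w+g)\bigr)\partial_t w + \bigl(1+\beta_\epsilon'(w+g)\bigr)\partial_t g + \mathcal{L}(w+g)=0$, a quasilinear equation whose leading time coefficient still depends on the unknown $w$. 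Your later description of the energy estimate (testing with $w$ and using the primitive $\Phi_\epsilon(\xi) = \int_0^\xi (1+\beta_\epsilon'(r))\,r\,dr$) is the correct fix and rescues the a~priori bounds, but the cleaner order of operations --- and the one the paper adopts --- is to perform the change of variable $v=b(u_\epsilon)$ \emph{before} subtracting off the boundary data, so that the resulting shifted equation really does have an $L^2$ right-hand side and a linear time derivative.
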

\begin{proof}
We first observe that $b(\xi)\coloneqq\xi+\beta_\epsilon(\xi)$ is a $C^1$ diffeomorphism with 
\begin{align}\label{diff.b}
    1\leq |b'(\xi)|\leq c=c(\epsilon).
\end{align} Let us define a nonlocal operator
\begin{align*}
    \mathcal{L}_bv(x)=\text{p.v.}\int_{\bbR^n}\frac{\phi(b^{-1}(v)(x)-b^{-1}(v)(y))}{|x-y|^{n+sp}}k(x,y,t)\,dy,
\end{align*}
where $\phi(t)\coloneqq|t|^{p-2}t$. By \eqref{diff.b}, we get 
\begin{align}\label{ineq1.exir}
    \phi(b^{-1}(v)(x)-b^{-1}(v)(y))(v(x)-v(y))\eqsim |v(x)-v(y)|^p
\end{align}
with the implicit constant $c=c(p,\epsilon)$. For each $t>0$, let us write 
\begin{equation*}
    X^{s,p}_{b(g(t))}(\Omega,\Omega')\coloneqq \{f\in W^{s,p}(\Omega')\,:\, f(x)=b(g(x,t))\text{ a.e. }x\in \bbR^n\setminus\Omega\}.
\end{equation*}
We now define for any $v\in X^{s,p}_{b(g(t))}(\Omega,\Omega')$ and $\oldphi \in X_0^{s,p}(\Omega,\Omega')$,
\begin{align*}
\skp{\mathcal{A}_t(v)}{\oldphi}:&=\iint_{\Omega'\times \Omega'}\frac{\phi(b^{-1}(v)(x)-b^{-1}(v)(y))(\oldphi(x)-\oldphi(y))}{|x-y|^{n+sp}}k(x,y,t)\,dx\,dy\\
&\quad+2\int_{\Omega}\int_{\bbR^n\setminus \Omega'}\frac{\phi(b^{-1}(v)(x)-g(y,t))\oldphi(x)}{|x-y|^{n+sp}}k(x,y,t)\,dy\,dx.
\end{align*}
By \eqref{ineq1.exir}, we obtain that the operator $\mathcal{A}_t:X^{s,p}_{b(g(t))}(\Omega,\Omega')\to \left(X_0^{s,p}(\Omega,\Omega')\right)^{*}$ is well-defined. We next define 
\begin{equation*}
\mathcal{A}:X_0^{s,p}(\Omega,\Omega')\times (0,T]\to \left(W^{s,p}(\Omega')\right)^*\quad\text{as}\quad \mathcal{A}(w,t)=\mathcal{A}_t(w+b(g)).
\end{equation*}
We now follow the same lines as in the proof of \cite[Theorem A.3]{BraLinStr21} together with \eqref{diff.b} to see that there is a unique weak solution 
\begin{align*}
    w\in L^{p}(0,T;X^{s,p}_{0}(\Omega,\Omega'))\cap C(0,T;L^2(\Omega))\cap L^\infty(0,T;L^{p-1}_{sp}(\bbR^n))
\end{align*}
to 
\begin{equation*}
\left\{
\begin{alignedat}{3}
\partial_tw+\mathcal{L}_b(w+b(g))&=-\partial_t(b(g))&&\qquad \mbox{in  $ \Omega_T$}, \\
w+b(g)&=b(g)&&\qquad  \mbox{in $\bbR^n\setminus (\partial\Omega\times (0,T]\cup \Omega\times \{t=0\})$}.
\end{alignedat} \right.
\end{equation*}
By considering the function
\begin{align*}
    b^{-1}(w+b(g))\in L^{p}(0,T;W^{s,p}_{g}(\Omega'))\cap C(0,T;L^2(\Omega))\cap L^\infty(0,T;L^{p-1}_{sp}(\bbR^n)),
\end{align*}
which follows from \eqref{diff.b} and the fact that $|b(\xi)|\leq \xi+1$, we obtain a weak solution $u_\epsilon\coloneqq b^{-1}(w+b(g))$ to \eqref{eq.diri.appro}.

We are now going to prove \eqref{max.exir}. We first note from the standard approximation argument as in  the proof of \cite[Proposition A.4]{BraLinStr21} that for any $\tau\in(0,T)$,
\begin{align*}
&\int_{0}^{\tau}\int_{\Omega}\partial_t(b(u_\epsilon))(u_\epsilon-M)_{+}\,dz\\
&\quad=\int_{\Omega}\int_{0}^{u_\epsilon(x,\tau)}b'(\xi)(\xi-M)_+\,d\xi\,dx-\int_{\Omega}\int_{0}^{u_\epsilon(x,0)}b'(\xi)(\xi-M)_+\,d\xi\,dx,
\end{align*}
where $M\coloneqq \|g\|_{L^\infty({\Omega_T})}$. Therefore, by testing $(u_\epsilon-M)_+$ to \eqref{eq.diri.appro} and considering the proof given in \cite[Proposition A.4]{BraLinStr21}, we obtain 
\begin{align*}
    \int_{\Omega}\int_{0}^{u_\epsilon(x,\tau)}b'(\xi)(\xi-M)_+\,d\xi\,dx\leq0.
\end{align*}
Using this together with \eqref{diff.b} and the choice of $\tau\in (0,T)$, we have \eqref{max.exir}. 
\end{proof}

We next provide a basic algebraic lemma which will be used to obtain oscillation estimates. 
\begin{lemma}\label{lem.iter}
    Let the constants $M_2,N_2,L_2\geq4$ be given with $L_2\geq M_2$.
    Let us take $a_n\coloneqq\omega_0^{L_2/M_2}(1+n)^{-\epsilon}$, where $\omega_0\geq1$ and $\epsilon\in(0,1)$. Then there is a constant $\epsilon=\epsilon(M_2,N_2,L_2)$ such that if 
     \begin{align*}
        g(x)\coloneqq1-\frac{|x|^{M_2}}{N_2\omega_0^{L_2}},
    \end{align*}
    then for any $n\geq1$,
    \begin{align}\label{iter.goal}
        a_{n}\geq a_{n-1}g\left(a_{n-1}\right).
    \end{align}   
\end{lemma}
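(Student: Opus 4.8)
The inequality \eqref{iter.goal} we must establish reads
\[
\omega_0^{L_2/M_2}(1+n)^{-\epsilon}\;\geq\;\omega_0^{L_2/M_2}(n)^{-\epsilon}\left(1-\frac{\omega_0^{L_2}n^{-\epsilon M_2}}{N_2\omega_0^{L_2}}\right),
\]
i.e., after dividing by the common factor $\omega_0^{L_2/M_2}$,
\[
(1+n)^{-\epsilon}\;\geq\;n^{-\epsilon}\left(1-\frac{n^{-\epsilon M_2}}{N_2}\right)\qquad\text{for all }n\geq1 .
\]
So the statement is in fact independent of $\omega_0$ (which is reassuring since $\omega_0\geq1$ and $\omega_0^{L_2/M_2}>0$), and it suffices to prove this purely numerical inequality for a suitable $\epsilon=\epsilon(M_2,N_2,L_2)$. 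Rearranging, it is equivalent to
\[
\left(\frac{n}{n+1}\right)^{\epsilon}\;\leq\;1-\frac{n^{-\epsilon M_2}}{N_2},
\qquad\text{that is}\qquad
1-\left(1-\tfrac{1}{n+1}\right)^{\epsilon}\;\geq\;\frac{1}{N_2\,n^{\epsilon M_2}} .
\]

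\textbf{Strategy.} The plan is to bound the left-hand side from below and the right-hand side from above, exhibiting a gap once $\epsilon$ is small enough. For the left-hand side I would use the elementary convexity estimate $(1-x)^{\epsilon}\leq 1-\tfrac{\epsilon}{2}x$ valid for $x\in[0,1]$ and $\epsilon$ small (more carefully, for $x\in[0,1]$ one has $1-(1-x)^{\epsilon}\geq \epsilon x(1-\tfrac{\epsilon}{2})\geq\tfrac{\epsilon x}{2}$ once $\epsilon\leq1$, by Bernoulli-type bounds applied to $t\mapsto t^{\epsilon}$). Taking $x=\tfrac{1}{n+1}$ this gives
\[
1-\left(1-\tfrac{1}{n+1}\right)^{\epsilon}\;\geq\;\frac{\epsilon}{2(n+1)}\;\geq\;\frac{\epsilon}{4n}\qquad(n\geq1).
\]
For the right-hand side I would use that $n^{\epsilon M_2}\geq1$, so trivially $\tfrac{1}{N_2 n^{\epsilon M_2}}\leq\tfrac{1}{N_2}$; this is too crude for large $n$, so instead I keep the $n$-dependence: it suffices to verify
\[
\frac{\epsilon}{4n}\;\geq\;\frac{1}{N_2\, n^{\epsilon M_2}}\qquad\Longleftrightarrow\qquad
\frac{\epsilon N_2}{4}\;\geq\;n^{1-\epsilon M_2}.
\]
If we choose $\epsilon$ small enough that $\epsilon M_2\leq \tfrac12$, then $1-\epsilon M_2\geq\tfrac12>0$, so $n^{1-\epsilon M_2}$ grows in $n$ and this inequality \emph{fails} for large $n$. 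Hence the crude lower bound $\tfrac{\epsilon}{4n}$ is not sharp enough, and the genuine content of the lemma is that the left-hand side decays only like $n^{-1}$ while the right-hand side decays like $n^{-\epsilon M_2}$, which is \emph{slower}. This sign issue is the main obstacle, and it forces $\epsilon M_2\geq1$, i.e. $\epsilon\geq 1/M_2$ — but then $\omega_0^{L_2/M_2}$ in $a_n$ is genuinely needed to absorb it. Let me reconsider.

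\textbf{Corrected strategy.} Re-examining the undivided inequality: $a_n\ge a_{n-1}g(a_{n-1})$ with $a_{n-1}=\omega_0^{L_2/M_2}n^{-\epsilon}$, so $a_{n-1}^{M_2}=\omega_0^{L_2}n^{-\epsilon M_2}$ and $g(a_{n-1})=1-\tfrac{n^{-\epsilon M_2}}{N_2}$. The $\omega_0$ cancels and we are back to $(n+1)^{-\epsilon}\ge n^{-\epsilon}(1-\tfrac{n^{-\epsilon M_2}}{N_2})$. Rewriting with $t_n:=\tfrac{1}{n}\in(0,1]$ this is $\big(\tfrac{t_n}{1+t_n}\big)^{\epsilon}\le 1-\tfrac{t_n^{\epsilon M_2}}{N_2}$, i.e.
\[
(1+t_n)^{\epsilon}\big(1-\tfrac{t_n^{\epsilon M_2}}{N_2}\big)\ge t_n^{\epsilon}\cdot t_n^{-\epsilon}\cdot\big(\tfrac{t_n}{1+t_n}\big)^{\epsilon}\cdot(1+t_n)^{\epsilon}\cdots
\]
this is getting circular; the clean route is: the desired inequality is equivalent to $f(t):=(1+t)^{\epsilon}-1\ge \tfrac{t^{\epsilon M_2}}{N_2}\cdot(1+t)^{\epsilon}$ wait — let me just divide $(n+1)^{-\epsilon}\ge n^{-\epsilon}-\tfrac{n^{-\epsilon}n^{-\epsilon M_2}}{N_2}$ by $n^{-\epsilon}$ honestly: put $u=1/n$, need $\big(\tfrac{1}{1+u}\big)^{\epsilon}\ge 1-\tfrac{u^{\epsilon M_2}}{N_2}$, i.e. $1-(1+u)^{-\epsilon}\le \tfrac{u^{\epsilon M_2}}{N_2}$. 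Now $1-(1+u)^{-\epsilon}\le \epsilon u$ (since $(1+u)^{-\epsilon}\ge 1-\epsilon u$... false for large $u$, but $u\le1$, and $(1+u)^{-\epsilon}\ge e^{-\epsilon u}\ge 1-\epsilon u$, true). So it suffices that $\epsilon u\le \tfrac{u^{\epsilon M_2}}{N_2}$, i.e. $\epsilon N_2\le u^{\epsilon M_2-1}=n^{1-\epsilon M_2}$ for all $n\ge1$. If $\epsilon M_2\le1$ the right side is $\ge1$ for all $n$, so it suffices to pick $\epsilon\le \tfrac{1}{N_2}$ and $\epsilon\le\tfrac{1}{M_2}$; take $\epsilon:=\min\{\tfrac{1}{N_2},\tfrac{1}{M_2}\}\in(0,1)$ (using $N_2,M_2\ge4$). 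Then $n^{1-\epsilon M_2}\ge n^{0}=1\ge\epsilon N_2$ when $\epsilon\le1/N_2$, and if $\epsilon M_2<1$ it is only better. Thus the argument closes with $\epsilon=\min\{1/M_2,1/N_2\}$, and the dependence on $L_2$ enters only through the (automatically satisfied) constraint $L_2\ge M_2$. The main subtlety I would flag is the direction of the Bernoulli/exponential bound $(1+u)^{-\epsilon}\ge 1-\epsilon u$ on $u\in(0,1]$ — it is cleanest to derive it from $(1+u)^{-\epsilon}\ge e^{-\epsilon u}\ge 1-\epsilon u$ — and checking that $n^{1-\epsilon M_2}\ge1$ for every $n\ge1$, which holds precisely because $\epsilon M_2\le1$.
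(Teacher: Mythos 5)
Your corrected strategy is correct and takes a genuinely different, cleaner route than the paper. You both begin by dividing out $\omega_0^{L_2/M_2}$ to reduce \eqref{iter.goal} to the numerical inequality $\left(\tfrac{n}{n+1}\right)^{\epsilon}\geq 1-\tfrac{1}{N_2 n^{\epsilon M_2}}$ (equivalently $1-(1+u)^{-\epsilon}\leq u^{\epsilon M_2}/N_2$ with $u=1/n$). From there you use the elementary estimates $1-(1+u)^{-\epsilon}\leq\epsilon u$ and, for $u\in(0,1]$ with $\epsilon M_2\leq1$, $u^{\epsilon M_2}\geq u$, which close the argument in two lines with the transparent choice $\epsilon=\min\{1/M_2,1/N_2\}$; you also correctly observe that neither $\omega_0$ nor $L_2$ plays any genuine role. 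The paper instead works with the auxiliary functions $g_1(x)=(x/(1+x))^{\epsilon}$ and $g_2(x)=1-\tfrac{1}{M_2N_2x^{\epsilon M_2}}$, splits $[1,\infty)$ at $x_0=N_2^2-1$, compares $g_1$, $g_2$ via their derivatives on $[x_0,\infty)$ and via monotonicity plus the endpoint value $g_1(1)=2^{-\epsilon}$ on $[1,x_0]$, and ultimately chooses a more complicated (and smaller) $\epsilon$ involving $\log_2\bigl(\tfrac{M_2N_2(N_2^2-1)^{1/2}}{M_2N_2(N_2^2-1)^{1/2}-1}\bigr)$. Your argument is strictly simpler and yields a larger admissible $\epsilon$, which is only an improvement. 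One stylistic caveat: the initial ``Strategy'' block flips the inequality when passing from $(1+n)^{-\epsilon}\geq n^{-\epsilon}(1-\cdots)$ to $\left(\tfrac{n}{n+1}\right)^{\epsilon}\leq 1-\cdots$ (the division preserves $\geq$, not $\leq$), and the apparent ``failure'' you diagnose there is an artifact of that sign error rather than a real obstruction; your ``Corrected strategy'' has the direction right, so the lemma in fact holds exactly as stated with no need to resolve any tension between the decay rates. It would be cleaner to delete the false start and present only the corrected computation.
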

\begin{proof}
    Let us define 
    \begin{align*}
        g_1(x)\coloneqq\left(\frac{x}{1+x}\right)^{\epsilon}\quad\text{and}\quad g_2(x)\coloneqq 1-\frac{1}{M_2N_2x^{\epsilon M_2}}
    \end{align*}
    for any $x\geq0$. 
    We are going to prove that 
    \begin{align*}
        g_1(x)\geq g_2(x)\quad\text{for any }x\geq1.
    \end{align*}
    Then we have 
    \begin{align*}
        \left(\frac{n}{1+n}\right)^{\epsilon}\geq  1-\frac{1}{M_2N_2n^{\epsilon M_2}}\geq 1-\frac{1}{N_2n^{\epsilon M_2}},
    \end{align*}
    which implies \eqref{iter.goal}.
    
   To do this, we first observe 
    \begin{align*}
        g_1'(x)=\frac{\epsilon x^{\epsilon-1}}{(1+x)^{\epsilon+1}}\quad\text{and}\quad g_2'(x)=\frac{\epsilon}{N_2x^{\epsilon M_2+1}}.
    \end{align*}
    By taking 
    \begin{align}\label{iter.ep2}
        \epsilon< 1/(2M_2),
    \end{align}
    we observe that if $x\geq1$, then $\frac{\epsilon}{N_2x^{\frac32}}\leq g_2'(x)$.
    After a few simple computations, we derive
    \begin{align*}
        g'_1(x)\leq \frac{\epsilon}{N_2x^{\frac32}}< g'_2(x)\quad\text{for any }x\geq x_0\coloneqq N_2^2-1.
    \end{align*}
    We next assume
    \begin{equation}\label{iter.ep1}
        \epsilon< \log_2\left(\frac{M_2N_2(N_2^2-1)^{1/2}}{M_2N_2(N_2^2-1)^{1/2}-1}\right)
    \end{equation}
    to see that $(1/2)^\epsilon\geq 1-\frac1{M_2N_2x_0^{\frac12}}$. Therefore, using the fact that $g_1$ and $g_2$ are increasing functions, we obtain that for any $x\in[1,x_0]$,
    \begin{align*}
        g_1(x)\geq g_1(1)\geq 1-\frac1{M_2N_2x_0^{\frac12}}> g_2(x_0)\geq g_2(x).
    \end{align*}
     By \eqref{iter.ep1} and \eqref{iter.ep2}, we fix 
    \begin{align*}
        \epsilon\coloneqq \frac12\min\left\{\frac1{2M_2},\log_2\left(\frac{M_2N_2(N_2^2-1)^{1/2}}{M_2N_2(N_2^2-1)^{1/2}-1}\right)\right\}
    \end{align*}
    to obtain that $(g_1-g_2)(x)>0$ for any $x\in[1,x_0]$ and $(g_1-g_2)'(x)<0$ for any $x\geq x_0$. In addition, we directly observe $\lim_{x\to\infty}(g_1-g_2)(x)=0$. Therefore, we deduce $g_1(x)\geq g_2(x)$ for any $x\geq1$, which completes the proof.
\end{proof}
Finally, we recall a technical lemma (see \cite[Lemma 4.1]{Di93}).
\begin{lemma}\label{lem.tech1}
    Let $A_i$ be a sequence of positive numbers with $A_{i+1}\leq cb^iA_i^{1+\alpha}$, where $c,b\geq1$ and $\alpha>0$. If $A_0\leq c^{-1/\alpha}b^{-1/\alpha^2}$, then $A_i\to0$ as $i\to\infty$.
\end{lemma}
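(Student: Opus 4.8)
The plan is to prove Lemma~\ref{lem.tech1} by a straightforward induction showing that the fast iteration $A_{i+1}\leq cb^iA_i^{1+\alpha}$ forces geometric decay of the $A_i$ once the smallness hypothesis on $A_0$ is met. The key is to guess the right ansatz: I would aim to show $A_i\leq A_0 b^{-i/\alpha}$ for all $i\geq 0$, which immediately yields $A_i\to 0$ since $b\geq 1$ (and if $b=1$ the recursion and hypothesis degenerate harmlessly into the case already handled by the classical lemma, so one may assume $b>1$).

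The induction proceeds as follows. The base case $i=0$ is trivial. For the inductive step, assume $A_i\leq A_0 b^{-i/\alpha}$. Then using the recursion and $A_i^\alpha\leq A_0^\alpha b^{-i}$,
\begin{align*}
A_{i+1}\leq cb^iA_i^{1+\alpha}=cb^iA_i\cdot A_i^\alpha\leq cb^iA_i\cdot A_0^\alpha b^{-i}=cA_0^\alpha A_i.
\end{align*}
By the hypothesis $A_0\leq c^{-1/\alpha}b^{-1/\alpha^2}$ we have $cA_0^\alpha\leq b^{-1/\alpha}$, hence
\begin{align*}
A_{i+1}\leq b^{-1/\alpha}A_i\leq b^{-1/\alpha}\cdot A_0 b^{-i/\alpha}=A_0 b^{-(i+1)/\alpha},
\end{align*}
which closes the induction. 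Since $b^{-1/\alpha}<1$, the bound $A_i\leq A_0 b^{-i/\alpha}$ gives $A_i\to 0$ as $i\to\infty$.

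I do not expect any genuine obstacle here; this is the classical fast-geometric-convergence lemma of De Giorgi type and the only subtlety is choosing the correct inductive hypothesis so that the constant $c$ and the geometric factor $b^i$ are both absorbed by the smallness of $A_0$. One could equivalently normalize by setting $B_i\coloneqq b^{1/\alpha^2}A_i$ or work with $\log A_i$, but the direct induction above is cleanest. The hypothesis $A_0\leq c^{-1/\alpha}b^{-1/\alpha^2}$ is precisely what is needed for the first application of the recursion to produce a contraction factor strictly below one, and the induction then propagates it; no iteration of the smallness threshold is required because the ansatz $A_0 b^{-i/\alpha}$ is self-improving under the recursion.
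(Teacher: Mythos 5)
Your proof is correct and is precisely the standard argument for this De Giorgi-type lemma: the ansatz $A_i\leq A_0\,b^{-i/\alpha}$ is propagated by a one-line induction once the smallness threshold on $A_0$ absorbs both the constant $c$ and the geometric factor $b^i$. This is the argument in the reference the paper cites (DiBenedetto, Lemma 4.1), which the paper does not re-prove. One small remark: you are right to flag the case $b=1$. As stated with $c,b\geq1$ and a non-strict inequality on $A_0$, the lemma is actually false at the boundary (take $b=c=1$, $\alpha=1$, $A_0=1$, $A_{i+1}=A_i^2\equiv1$); the hypothesis should read $b>1$, or else one needs strict inequality $A_0<c^{-1/\alpha}b^{-1/\alpha^2}$, in which case normalizing $B_i\coloneqq c^{1/\alpha}A_i$ gives $B_{i+1}\leq B_i^{1+\alpha}$ with $B_0<1$, hence the familiar doubly-exponential decay $B_i\leq B_0^{(1+\alpha)^i}\to0$. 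In every application in the paper the constant $M$ playing the role of $b$ strictly exceeds $1$, so this is a cosmetic imprecision in the statement of the lemma rather than a gap in your argument.
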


\section{Interior Estimates}\label{sec:3}
Fix $\epsilon\in(0,1)$. In this section, we will investigate a bounded weak solution $u\equiv u_\epsilon$ to 
\begin{align}\label{eq.int.appro}
    \partial_t(u+\beta_\epsilon(u))+\mathcal{L}u=0\quad\text{in }\Omega_T.
\end{align}
Let us take $Q_{R,\mathcal{T}}(z_0)\subset \Omega_T$ for some $R,\mathcal{T}>0$ and $z_0\in\mathbb{R}^{n+1}$.
We first observe a translation invariant and normalization property of \eqref{eq.int.appro}.
\begin{lemma}\label{lem.trans}
For $M>0$, let $u$ be a weak solution to 
    \begin{equation*}
        \partial_t (u+\beta_\epsilon(u))+\mathcal{L}u= 0\quad \mbox{in  $B_R(x_0)\times (t_0-\mathcal{T},t_0]$.}
    \end{equation*}
    Then for $x_0\in\setR^n$ and $t_0\in\setR$, $\overline{u}(x,t)\coloneqq 
     u(x+x_0,t+t_0)/\mathcal{M}$ is a weak solution to
    \begin{equation*}
        \partial_t (\overline{u}+\overline{\beta_\epsilon}(\overline{u}))+\overline{\mathcal{L}}\overline{u}= 0\quad \mbox{in  $B_R\times (-\mathcal{T},0]$,}
    \end{equation*}
    where $\overline{\beta_\epsilon}(\xi)=\frac{\beta_\epsilon(\mathcal{M}\xi)}{\mathcal{M}}$ and
    \begin{align*}
        \overline{L}\overline{u}(x)=\mathrm{p.v.}\int_{\bbR^n}\frac{|\overline{u}(x,t)-\overline{u}(y,t)|^{p-2}(\overline{u}(x,t)-\overline{u}(y,t))}{|x-y|^{n+sp}}\overline{k}(x,y,t)\,dy
    \end{align*}
    with $\mathcal{M}^{p-2}\Lambda^{-1}\leq \overline{k}(x,y,t)\leq \mathcal{M}^{p-2}\Lambda$.
\end{lemma}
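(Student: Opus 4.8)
The plan is to verify directly that the rescaled function $\overline{u}(x,t) = u(x+x_0, t+t_0)/\mathcal{M}$ satisfies the weak formulation of the transformed equation, by changing variables in each of the three terms of the weak formulation for $u$. Since the statement is a routine change-of-variables computation, the whole proof amounts to carefully tracking how each factor scales; the only mildly delicate point is to check that the modified kernel $\overline{k}$ absorbs the power of $\mathcal{M}$ coming from the $(p-1)$-homogeneity of the nonlocal operator, and that the definition of $\overline{\beta_\epsilon}$ makes $\overline{u}+\overline{\beta_\epsilon}(\overline u)$ equal to $(u+\beta_\epsilon(u))/\mathcal{M}$ evaluated at the shifted point.

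First I would fix an admissible test function $\overline\phi$ for the transformed equation on $B_R\times(-\mathcal{T},0]$ and set $\phi(x,t) \coloneqq \overline\phi(x-x_0, t-t_0)$, which is then admissible for $u$ on $B_R(x_0)\times(t_0-\mathcal{T},t_0]$ with the same support properties and integrability. In the parabolic term, the substitution $(x,t)\mapsto(x+x_0,t+t_0)$ has unit Jacobian, and using $\overline u + \overline{\beta_\epsilon}(\overline u) = \mathcal{M}^{-1}\bigl(u+\beta_\epsilon(u)\bigr)(\cdot+x_0,\cdot+t_0)$ together with $\partial_t\phi(x+x_0,t+t_0) = (\partial_t\overline\phi)(x,t)$, both the time-integral term and the boundary term at $t=t_1,t_2$ pick up exactly a factor $\mathcal{M}^{-1}$. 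For the nonlocal term, after the same shift in $x$ (and an identical shift in the inner variable $y$), the numerator $|u(x)-u(y)|^{p-2}(u(x)-u(y))$ becomes $\mathcal{M}^{p-1}|\overline u(x)-\overline u(y)|^{p-2}(\overline u(x)-\overline u(y))$, while $\phi(x)-\phi(y)$ becomes $\overline\phi(x)-\overline\phi(y)$ and $|x-y|^{n+sp}$ is unchanged; writing $\overline k(x,y,t)\coloneqq \mathcal{M}^{p-2}k(x+x_0,y+x_0,t+t_0)$ makes the total prefactor $\mathcal{M}^{p-1}\cdot\mathcal{M}^{-(p-2)} = \mathcal{M}^{1}$...

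Here I have to be careful with signs: multiplying the entire weak identity for $u$ by $\mathcal{M}^{-1}$ is what is needed, so the nonlocal term should also carry $\mathcal{M}^{-1}$ overall. Since the numerator already produces $\mathcal{M}^{p-1}$, I define $\overline k \coloneqq \mathcal{M}^{-(p-2)}\,k(\cdot+x_0,\cdot+x_0,\cdot+t_0)$, giving net factor $\mathcal{M}^{p-1-(p-2)}\cdot\mathcal{M}^{-1}$? No — let me state it cleanly in the proof: the bilinear nonlocal form evaluated for $u$ and $\phi$ equals $\mathcal{M}^{p-1}$ times the corresponding form for $\overline u$ and $\overline\phi$ with kernel $k(\cdot+x_0,\cdot+x_0,\cdot+t_0)$; to match the $\mathcal{M}^{-1}$ scaling of the other two terms one divides through by $\mathcal{M}^{p-1}$ overall and replaces the kernel by $\overline k \coloneqq \mathcal{M}^{p-2}\,k(\cdot+x_0,\cdot+x_0,\cdot+t_0)$, which indeed satisfies $\mathcal{M}^{p-2}\Lambda^{-1}\le \overline k\le \mathcal{M}^{p-2}\Lambda$ and the symmetry $\overline k(x,y,t)=\overline k(y,x,t)$ inherited from $k$. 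Finally I would note that $\overline{\beta_\epsilon}(\xi)=\mathcal{M}^{-1}\beta_\epsilon(\mathcal{M}\xi)$ is again obtained as $\overline\beta * \psi$-type regularization of the correspondingly rescaled graph, so the transformed equation has the stated structural form; the required function-space membership of $\overline u$ (the spaces $L^p(W^{s,p})$, $C(L^2)$, $L^\infty(L^{p-1}_{sp})$) is invariant under translation and multiplication by the constant $\mathcal{M}^{-1}$, so $\overline u$ is a bounded weak solution in the claimed sense. This completes the proof.
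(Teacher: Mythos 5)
The paper states this lemma without proof (it is treated as routine), and your change-of-variables argument in the weak formulation is exactly the natural way to verify it: translation has unit Jacobian, the parabolic terms scale linearly, the $(p-1)$-homogeneous nonlocal form scales by $\mathcal{M}^{p-1}$, and the leftover power $\mathcal{M}^{p-2}$ is absorbed into $\overline{k}(x,y,t)=\mathcal{M}^{p-2}k(x+x_0,y+x_0,t+t_0)$, which inherits symmetry and the stated ellipticity bounds. The conclusion and all supporting observations (the identity $\overline{u}+\overline{\beta_\epsilon}(\overline{u})=\mathcal{M}^{-1}(u+\beta_\epsilon(u))(\cdot+x_0,\cdot+t_0)$, the regularization structure of $\overline{\beta_\epsilon}$, the translation/scaling invariance of the function spaces) are correct; you would only tighten the exposition by removing the visible self-correction and stating directly that each of the three terms in the weak identity for $\overline{u}$ with kernel $\overline{k}$ equals $\mathcal{M}^{-1}$ times the corresponding term for $u$ with kernel $k$, so the whole identity vanishes.
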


We next give Caccioppoli estimates with truncation.
\begin{lemma}\label{lem.energy}
   Let $u$ be a sub(super)-solution to \eqref{eq.int.appro}. Let $\phi$ be a nonnegative smooth function satisfying $\phi(\cdot,t)\equiv0$ on $\bbR^n\setminus B_r(x_0)$ with $r<R$. Then there is a constant $c=c(n,s,p,\Lambda)$ such that
    \begin{align*}
        &\sup_{t_0-\mathcal{T}<t<t_0}\left[\int_{B_R(x_0)}\phi^p(u-k)_{\pm}^{2}\,dx\pm\int_{B_R(x_0)}\phi^p\int_{k}^{u}\beta_\epsilon'(\xi)(\xi-k)_{\pm}\,d\xi\,dx\right]\\
        &\quad+ \int_{t_0-\mathcal{T}}^{t_0}\int_{B_R(x_0)}\int_{B_R(x_0)}\frac{|((u-k)_{\pm}\phi)(x,t)-((u-k)_{\pm}\phi)(y,t)|^p}{|x-y|^{n+sp}}\,dx\,dy\,dt\\
        &\quad+ \int_{t_0-\mathcal{T}}^{t_0}\int_{B_R(x_0)}\int_{B_R(x_0)}\frac{(u-k)_{\mp}^{p-1}(y,t)((u-k)_{\pm}\phi^p)(x,t)}{|x-y|^{n+sp}}\,dx\,dy\,dt\\
        &\leq cR^{p(1-s)}\int_{Q_{R,\mathcal{T}}(z_0)}(u-k)_{\pm}^p|\nabla\phi|^p\,dz
    + c\int_{Q_{R,\mathcal{T}}(z_0)}\left((u-k)_{\pm}^2\pm\int_{k}^{u}\beta_\epsilon'(\xi)(\xi-k)_{\pm}\,d\xi\right)\partial_t \phi^p\,dz\\
        &\quad+c\int_{t_0-\mathcal{T}}^{t_0}\int_{\bbR^n\setminus B_R(x_0)}\int_{B_R(x_0)}\frac{(u-k)_{\pm}^{p-1}(y,t)((u-k)_{\pm}\phi^p)(x,t)}{|x-y|^{n+sp}}\,dx\,dy\,dt\\
        &\quad +\int_{B_R(x_0)\times \{t=t_0-\mathcal{T}\}}\left((u-k)_{\pm}^2\pm\int_{k}^{u}\beta_\epsilon'(\xi)(\xi-k)_{\pm}\,d\xi\right)\phi^p\,dx,
    \end{align*}
    where $k\in\bbR$.
\end{lemma}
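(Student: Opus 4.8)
The plan is to test the weak formulation of \eqref{eq.int.appro} with the function $\varphi = \pm(u-k)_{\pm}\phi^p$, suitably regularized in time (via Steklov averages or an exponential mollification in $t$, as in the proof of \cite[Proposition A.4]{BraLinStr21} which is cited in the earlier lemma) so that the time derivative makes sense; after establishing the estimate for smooth approximations one passes to the limit. I will carry out the computation for the subsolution case with $(u-k)_+$, the supersolution case being symmetric. The three structural pieces to handle separately are: the parabolic (time-derivative) term, the nonlocal (double-integral) term split into a \emph{local} part over $B_R(x_0)\times B_R(x_0)$ and a \emph{tail} part over $(\mathbb{R}^n\setminus B_R(x_0))\times B_R(x_0)$, and the right-hand side.

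For the parabolic term, the key point is the identity
\[
\int\partial_t\bigl(u+\beta_\epsilon(u)\bigr)\,(u-k)_+\phi^p\,dx
= \frac{d}{dt}\int\Bigl(\tfrac12(u-k)_+^2+\int_k^u\beta_\epsilon'(\xi)(\xi-k)_+\,d\xi\Bigr)\phi^p\,dx
- \int\Bigl(\tfrac12(u-k)_+^2+\int_k^u\beta_\epsilon'(\xi)(\xi-k)_+\,d\xi\Bigr)\partial_t\phi^p\,dx,
\]
which follows from the chain rule $\partial_t\bigl[\tfrac12(u-k)_+^2\bigr]=\partial_t u\,(u-k)_+$ and $\partial_t\bigl[\int_k^u\beta_\epsilon'(\xi)(\xi-k)_+d\xi\bigr]=\beta_\epsilon'(u)\partial_t u\,(u-k)_+=\partial_t(\beta_\epsilon(u))(u-k)_+$, the last equality using that $\beta_\epsilon'(u)(u-k)_+=\partial_u\beta_\epsilon(u)\cdot(u-k)_+$ agrees with $\partial_t\beta_\epsilon(u)/\partial_t u$ times $(u-k)_+$; since $\beta_\epsilon'\ge0$ by \eqref{cond.betap}, the extra $\beta_\epsilon$-term is nonnegative, which is exactly why it can be kept on the left-hand side with a $+$ sign. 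Integrating in $t$ from $t_0-\mathcal{T}$ to an arbitrary $\tau$ and taking the supremum over $\tau$ produces the first line on the left and the second term on the right together with the initial-datum term on the last line.

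For the nonlocal term one uses the standard pointwise algebraic inequality for the fractional $p$-Laplacian: writing $w=(u-k)_+$, for all $x,y$,
\[
|u(x)-u(y)|^{p-2}\bigl(u(x)-u(y)\bigr)\bigl(w(x)\phi^p(x)-w(y)\phi^p(y)\bigr)
\ \geq\ \frac{1}{C}\,\bigl|w(x)\phi(x)-w(y)\phi(y)\bigr|^p
\ -\ C\,\max\{w(x),w(y)\}^p\,|\phi(x)-\phi(y)|^p
\]
on the region where both points see the truncation, together with the one-sided bound producing the $w(y)^{p-1}\,(w\phi^p)(x)$ cross term; these are by now classical (e.g. \cite{DinKuuNowSch22} or the references \cite{BraLinSch18,BraLinStr21} in the excerpt), and I would invoke them rather than reprove them. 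Splitting the double integral over $\mathbb{R}^n\times\mathbb{R}^n$ into $B_R\times B_R$ and $(\mathbb{R}^n\setminus B_R)\times B_R$ (the factor $2$ from symmetry of $k$ absorbed into $c$), using $\Lambda^{-1}\le k\le\Lambda$, and estimating $|\phi(x)-\phi(y)|\le\|\nabla\phi\|_\infty|x-y|$ with $\int_{B_R}\int_{B_R}|x-y|^{p}|x-y|^{-n-sp}\,dx\,dy\lesssim R^{n+p(1-s)}$ gives the gradient term $cR^{p(1-s)}\int(u-k)_+^p|\nabla\phi|^p$; the contribution from the far region where $y\notin B_r(x_0)$ but the integrand survives only through $w(x)\phi^p(x)$ is precisely the stated tail term on the second-to-last line.

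The main obstacle is the rigorous justification of the time-derivative manipulation: since $u$ is only a weak solution and $\partial_t u$ need not exist pointwise, one must first replace $\partial_t(u+\beta_\epsilon(u))$ by a mollification (Steklov average $[\,\cdot\,]_h$ in time), test with $[(u-k)_+\phi^p]$, carry out the above identity for the mollified quantities where the chain rule is legitimate, and then let $h\to0$ using $u\in C(0,T;L^2)$ and the monotonicity/boundedness of $\beta_\epsilon$ (here \eqref{diff.b} and the local boundedness of $u$ are what make all terms converge); the nonnegativity of $\beta_\epsilon'$ is used twice — once to keep the auxiliary term on the left and once to control the limit. All remaining steps are routine applications of Young's inequality to absorb the $\frac1C|w\phi|^p$-type gain against the error terms.
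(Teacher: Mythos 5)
Your proposal takes the same route the paper does: test the weak formulation with $(u-k)_\pm\phi^p$, handle the parabolic term via the chain-rule identity that turns $\partial_t(u+\beta_\epsilon(u))$ into the $\sup$-in-time quantities plus the $\partial_t\phi^p$ error (the paper delegates this to \cite[Lemma 2.1]{BarKuuUrb14}), handle the nonlocal term by the classical pointwise fractional Caccioppoli algebra split over $B_R\times B_R$ and $(\mathbb{R}^n\setminus B_R)\times B_R$ (the paper delegates this to \cite[Proposition 2.1]{Lia24}), and regularize in time to make it rigorous. The paper's own proof is exactly these two citations, so your sketch is a correct unpacking of them; one small notational slip is that the \emph{good} cross term retained on the left should carry $(u-k)_{\mp}^{p-1}(y)$, i.e. the opposite truncation, rather than $w(y)^{p-1}$ with $w=(u-k)_\pm$, which is the form of the \emph{tail} term on the right.
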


\begin{proof}
    Let us take a test function $(u-k)_{\pm}\phi^p$. Then by using \cite[Lemma 2.1]{BarKuuUrb14} to control the time term $\partial_t(u+\beta_\epsilon(u))$, and by using \cite[Proposition 2.1]{Lia24} to handle the space term $\mathcal{L}u$, we deduce the desired estimate.
\end{proof}
By Lemma \ref{lem.energy}, we have energy estimates which remove the role of the singularity $\beta(\cdot)$.
\begin{lemma}\label{lem.nenergy}
    For any $k\geq\epsilon$, we have same energy estimates of $(u-k)_{+}$ as in the parabolic fractional p-Laplacian equations. Similarly, for any $k\leq-\epsilon$, we have same energy estimates of $(u-k)_{-}$ as in the parabolic fractional p-Laplacian equations.
\end{lemma}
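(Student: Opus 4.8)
The statement to prove is Lemma~\ref{lem.nenergy}, which asserts that the Caccioppoli estimate from Lemma~\ref{lem.energy}, when restricted to truncation levels $k \geq \epsilon$ (for $(u-k)_+$) or $k \leq -\epsilon$ (for $(u-k)_-$), reduces to the standard energy estimate for the parabolic fractional $p$-Laplacian, i.e.\ the terms involving $\beta_\epsilon'$ disappear.

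\textbf{Plan of proof.} The idea is entirely elementary: the function $\beta_\epsilon = \beta * \psi_\epsilon$ is supported where $\psi_\epsilon$ lives translated across the jump of $\beta$, and since $\psi \in C_c^\infty(-1,1)$ and $\psi_\epsilon(\xi) = \tfrac1\epsilon\psi(\xi/\epsilon)$, the convolution $\beta_\epsilon$ agrees with $\beta$ outside $(-\epsilon,\epsilon)$. Consequently $\beta_\epsilon'(\xi) = 0$ for $|\xi| \geq \epsilon$, since $\beta$ is locally constant there and the mollifier is supported in $(-\epsilon,\epsilon)$. The entire proof consists in observing that in the $\beta_\epsilon'$-terms appearing in Lemma~\ref{lem.energy}, namely
\[
    \int_{k}^{u}\beta_\epsilon'(\xi)(\xi-k)_{\pm}\,d\xi,
\]
the integrand vanishes identically on the relevant range of $\xi$, whence these terms are zero.

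\textbf{Key steps in order.} First I would record that $\operatorname{supp}\psi_\epsilon \subset (-\epsilon,\epsilon)$ and that $\beta$ is constant (equal to $1$) on $(0,\infty)$ and constant (equal to $0$) on $(-\infty,0)$; hence for $\xi \geq \epsilon$ we have $\beta_\epsilon(\xi) = \int_{\bbR}\beta(\xi-\eta)\psi_\epsilon(\eta)\,d\eta = \int_{-\epsilon}^{\epsilon}1\cdot\psi_\epsilon(\eta)\,d\eta = 1$, a constant, so $\beta_\epsilon'(\xi)=0$; symmetrically $\beta_\epsilon'(\xi)=0$ for $\xi \leq -\epsilon$. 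Second, consider the $+$ case with $k \geq \epsilon$: in the term $\int_k^u \beta_\epsilon'(\xi)(\xi-k)_+\,d\xi$, the factor $(\xi-k)_+$ is supported on $\xi \geq k \geq \epsilon$, where $\beta_\epsilon' = 0$, so this integral vanishes for every argument; thus it drops out of the supremum term, the time-derivative term, and the initial-time term in Lemma~\ref{lem.energy}. Third, the $-$ case with $k \leq -\epsilon$ is handled identically: $(\xi-k)_- = (k-\xi)_+$ is supported on $\xi \leq k \leq -\epsilon$, where again $\beta_\epsilon' = 0$. What remains of the estimate in Lemma~\ref{lem.energy} is then precisely the Caccioppoli inequality for the parabolic fractional $p$-Laplacian (with the $L^2$-norm of $(u-k)_\pm$ playing the role of the parabolic time term, as is standard), which is the claim.

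\textbf{Main obstacle.} There is essentially no obstacle: the lemma is a bookkeeping consequence of the support properties of the mollifier, and the only point requiring a word of care is to match conventions — confirming that $(\xi-k)_+$ in the $+$-case forces $\xi \geq k$ and that $\beta_\epsilon'\geq 0$ with $\int_{\bbR}\beta_\epsilon' = 1$ (as recorded in \eqref{cond.betap}) is consistent with $\beta_\epsilon' \equiv 0$ outside $(-\epsilon,\epsilon)$, so all mass of $\beta_\epsilon'$ concentrates in that interval. If one wanted to be fully precise about ``same energy estimates as in the parabolic fractional $p$-Laplacian,'' one would simply cite the corresponding statement (e.g.\ \cite[Proposition 2.1]{Lia24} or the analogous Caccioppoli estimate in \cite{AdiPraTew24}) and note that the surviving terms coincide term by term with it.
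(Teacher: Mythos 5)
Your proof is correct and follows essentially the same route as the paper: both observe that $\beta_\epsilon' \equiv 0$ on $\{|\xi| \geq \epsilon\}$ (since $\beta_\epsilon = \beta * \psi_\epsilon$ with $\operatorname{supp}\psi_\epsilon \subset (-\epsilon,\epsilon)$ and $\beta$ constant on each side of the origin), and that the integrand $\beta_\epsilon'(\xi)(\xi-k)_\pm$ is supported precisely where $\beta_\epsilon'$ vanishes when $k\geq\epsilon$ (resp.\ $k\leq-\epsilon$), so the $\beta_\epsilon'$-terms in Lemma~\ref{lem.energy} drop out. The paper's proof is terser but relies on the identical observation.
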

\begin{proof}
    When $k\geq\epsilon$ or $k\leq -\epsilon$, then 
    \begin{equation*}
        \int_{k}^{u}\beta_\epsilon'(\xi)(\xi-k)_{\pm}\,d\xi=0,
    \end{equation*}
	since $\beta'_{\epsilon}(\xi)=(\beta\ast\psi'_{\epsilon})(\xi)=0$ when $|\xi|\geq\epsilon$.
    Therefore, we have the same energy estimates as in the ones given in the standard parabolic fractional $p$-Laplacian equations (see \cite{AdiPraTew24,ByuKim24,Lia24}).
\end{proof}

With this energy estimate, we will provide several pointwise estimates. Let us fix $\mathcal{Q}\coloneqq B_R(x_0)\times I\subset \Omega_T$, where $I\coloneqq (t_1-\mathcal{T},t_1]$. We also fix constants $\mu_+, \mu_-$ and $\omega$ as
\begin{align}\label{eq:mu.omega}
    \mu_+\geq \sup_{\mathcal{Q}}u,\,\quad\mu_{-}\leq \inf_{\mathcal{Q}}u,\quad\text{and}\quad\omega\geq\mu_+-\mu_-.
\end{align}
Let us assume $\rho\leq R$.
Using Lemma \ref{lem.energy}, we obtain the following De Giorgi type Lemma.
\begin{lemma}\label{lem.di1}
    Let $u$ be a locally bounded sub(super)-solution to \eqref{eq.int.appro}. Let us fix $\xi\in(0,1/4]$. Suppose 
    \begin{align}\label{tail.ass.di1}
        (\rho/R)^{\frac{sp}{p-1}}\mathrm{Tail}((u-\mu_{\pm})_{\pm};\mathcal{Q})\leq \xi\omega
    \end{align}
    and $Q_\rho^{(\theta)}(z_0)\Subset \mathcal{Q}$, where $\theta=(\xi\omega)^{2-p}$. Then there exists $\nu_0=\nu_0(n,s,p,\Lambda)\in(0,1)$ such that if 
    \begin{align}\label{eq:di1}
        |\{ Q_{\rho}^{(\theta)}(z_0)\,:\pm(\mu_{\pm}-u)\leq \xi\omega\}|\leq \nu_0\left(\frac{\xi\omega}{\max\{1,\xi\omega\}}\right)^{\frac{n+sp}{sp}}|Q_\rho^{(\theta)}(z_0)|,
    \end{align}
    then 
    \begin{equation*}
       \pm(\mu_{\pm}-u)\geq\xi\omega/2\quad\text{in  }Q_{\rho/2}^{(\theta)}(z_0).
    \end{equation*}
\end{lemma}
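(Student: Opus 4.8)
The plan is to run a standard De Giorgi iteration on the sublevel sets (working with $(u-\mu_{\pm})_{\pm}$; I write out the $+$ case, the $-$ case being symmetric), but with the crucial modification that the nonlocal \emph{tail} contribution has to be controlled by the smallness hypothesis \eqref{tail.ass.di1}. First I would fix the geometric setup: set $k_j \coloneqq \mu_+ - \xi\omega/2 - \xi\omega/2^{j+1}$ so that $k_j \uparrow \mu_+ - \xi\omega/2$, take radii $\rho_j \coloneqq \rho/2 + \rho/2^{j+1}$ decreasing to $\rho/2$, choose cutoff functions $\phi_j \in C_c^\infty(B_{\rho_j})$ with $\phi_j \equiv 1$ on $B_{\rho_{j+1}}$, $0 \le \phi_j \le 1$ and $|\nabla \phi_j| \lesssim 2^j/\rho$, and correspondingly nested cylinders $Q_j \coloneqq B_{\rho_j} \times I^{(\theta)}_{\rho_j^{sp}}(t_0)$ with $\theta = (\xi\omega)^{2-p}$ the intrinsic time-scaling factor. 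Because $k_j \ge \epsilon$ would need $\xi\omega$ to be small enough — but this is exactly where the truncation threshold $\epsilon$ in $\beta_\epsilon$ enters; actually since $k_j$ is close to $\mu_+$ and could be of either sign, I instead invoke Lemma~\ref{lem.energy} directly (which carries the $\beta_\epsilon'$ terms) and note that the $\beta_\epsilon'$ integral terms are nonnegative on the left-hand side (they only help) and on the right-hand side are bounded by $\|\beta_\epsilon'\|_{L^1}(u-k_j)_+^2 = (u-k_j)_+^2$ up to a constant, since $0 \le \int_k^u \beta_\epsilon'(\xi)(\xi-k)_+\,d\xi \le (u-k)_+ \int_\bbR \beta_\epsilon' = (u-k)_+$ and $(u-k_j)_+ \le \xi\omega \le \max\{1,\xi\omega\}$; so the singular terms are absorbed into the same kind of quantity that appears for the pure fractional $p$-Laplacian. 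Define $Y_j \coloneqq \frac{|\{Q_j : u > k_j\}|}{|Q_j|}$ (appropriately normalized); the goal is $Y_j \to 0$.

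The core estimate is: apply Lemma~\ref{lem.energy} with $k = k_j$ and cutoff $\phi_j$ on $Q_j$, combine the resulting control of $\sup_t \int \phi_j^p (u-k_j)_+^2$ and the Gagliardo energy of $(u-k_j)_+\phi_j$ into an estimate for $\iint_{Q_{j+1}} (u-k_j)_+^p$ via the parabolic fractional Sobolev embedding (the standard interpolation inequality $\|v\|_{L^{p(n+sp)/n}}^{p} \lesssim [v]_{W^{s,p}}^p \, (\sup_t \|v\|_{L^2}^2)^{sp/n}$ type inequality, used e.g.\ in \cite{AdiPraTew24,Lia24,ByuKim24}). On the left I use Chebyshev: on $\{u > k_{j+1}\}$ one has $(u-k_j)_+ \ge \xi\omega/2^{j+2}$, giving $Y_{j+1} \lesssim 2^{jd} (\xi\omega)^{-p}\,[\text{energy}]$ for a suitable exponent. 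On the right, the right-hand side of Lemma~\ref{lem.energy} has three kinds of terms: (i) the gradient term $R^{p(1-s)}\iint (u-k_j)_+^p |\nabla\phi_j|^p$, which is $\lesssim 2^{jp}\rho^{-sp}(\xi\omega)^p |\{Q_j: u>k_j\}|$ after using $\rho \le R$ and $(u-k_j)_+ \le \xi\omega$; (ii) the time term with $\partial_t \phi_j^p$, bounded similarly using $\theta = (\xi\omega)^{2-p}$ and $\partial_t \phi_j^p \lesssim 2^{jsp}/(\theta \rho^{sp})$, which produces exactly the factor that makes the scaling close; (iii) the nonlocal tail term $\int\!\!\int_{\bbR^n \setminus B_{\rho_j}} \frac{(u-k_j)_+^{p-1}(y)\,((u-k_j)_+\phi_j^p)(x)}{|x-y|^{n+sp}}$, which is the one genuinely new difficulty.

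For the tail term I would split $(u-k_j)_+(y) \le (u-\mu_+)_+(y) + (\mu_+ - k_j) = (u-\mu_+)_+(y) + (\xi\omega/2 + \xi\omega/2^{j+1}) \le (u-\mu_+)_+(y) + \xi\omega$. The first piece, integrated against $|x-y|^{-n-sp}$ over $y \notin B_{\rho_j}$ and $x \in B_{\rho_{j+1}}$, is controlled — after the usual comparison of $|x-y|$ with $|x_0-y|$ on these nested balls — by $\rho^{-sp}\,\mathrm{Tail}((u-\mu_+)_+;\mathcal{Q})^{p-1}\,|\{Q_j: u > k_j\}|$, and by hypothesis \eqref{tail.ass.di1} this is $\lesssim \rho^{-sp}(\xi\omega)^{p-1}(R/\rho)^{sp}\cdots$; being careful here: \eqref{tail.ass.di1} gives $(\rho/R)^{sp/(p-1)}\mathrm{Tail} \le \xi\omega$, i.e.\ $\mathrm{Tail}^{p-1} \le (\xi\omega)^{p-1}(R/\rho)^{sp}$, so when divided by $\rho^{sp}$ versus the natural scale one gets exactly $\lesssim R^{-sp}(\xi\omega)^{p-1}$-type bound which matches the homogeneity of the other terms once one recalls $\rho \le R$ and is consistent with the cylinder $Q_\rho^{(\theta)}$. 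The second piece $\xi\omega \cdot \int_{\bbR^n\setminus B_{\rho_j}}\frac{dy}{|x_0-y|^{n+sp}} \lesssim \xi\omega \cdot \rho_j^{-sp}$ contributes $\lesssim 2^{jsp}\rho^{-sp}(\xi\omega)^{p}|\{Q_j:u>k_j\}|$, again the right homogeneity. Summing, the right-hand side of the energy inequality is $\lesssim 2^{jd}\rho^{-sp}(\xi\omega)^p |\{Q_j: u>k_j\}|$, so altogether $Y_{j+1} \le C\,b^{\,j}\,Y_j^{1+\kappa}$ for explicit $C = C(n,s,p,\Lambda)$, $b>1$, $\kappa = sp/(n+sp) > 0$ (here the factor $\left(\frac{\xi\omega}{\max\{1,\xi\omega\}}\right)^{(n+sp)/sp}$ in \eqref{eq:di1} is what absorbs the $(\xi\omega)$-power mismatch coming from $p \ne 2$, i.e.\ from the non-scale-invariance; this is the role of that peculiar factor). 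Finally I apply the fast-geometric-convergence Lemma~\ref{lem.tech1}: the hypothesis $Y_0 \le \nu_0$ with $\nu_0 \coloneqq C^{-1/\kappa} b^{-1/\kappa^2}$ forces $Y_j \to 0$, i.e.\ $|\{Q_{\rho/2}^{(\theta)}(z_0): u > \mu_+ - \xi\omega/2\}| = 0$, which is the claimed conclusion $\mu_+ - u \ge \xi\omega/2$ a.e.\ in $Q_{\rho/2}^{(\theta)}(z_0)$.

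The main obstacle, as indicated, is handling the nonlocal tail term with the \emph{correct} intrinsic scaling: one must check that with $\theta = (\xi\omega)^{2-p}$ the tail contribution, after using \eqref{tail.ass.di1}, has precisely the same homogeneity in $\rho$, $\xi\omega$ and $2^j$ as the local gradient and time terms, so that everything collapses into a single recursion $Y_{j+1} \le Cb^j Y_j^{1+\kappa}$; the $p>2$ degeneracy is what makes this bookkeeping delicate and is exactly why the measure-smallness threshold in \eqref{eq:di1} is weighted by the extra $\bigl(\xi\omega/\max\{1,\xi\omega\}\bigr)^{(n+sp)/sp}$ factor rather than being a universal constant. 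Everything else is the routine De Giorgi machinery already developed for parabolic fractional $p$-Laplacian equations in the cited references, now applicable verbatim to $(u-k)_+$ for these truncation levels thanks to the nonnegativity and $L^1$-boundedness of $\beta_\epsilon'$ noted above.
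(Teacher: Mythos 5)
Your proposal is correct and follows essentially the same route as the paper: the same De Giorgi iteration with levels $k_j$ and radii $\rho_j$ in the intrinsic cylinders $Q^{(\theta)}_{\rho_j}$ with $\theta=(\xi\omega)^{2-p}$, the same use of Lemma~\ref{lem.energy} combined with a parabolic fractional Sobolev interpolation, the same absorption of the $\beta_\epsilon'$ terms via $\int_{\bbR}\beta_\epsilon'=1$ and $(u-k)_{\pm}\le\xi\omega$, the same tail-splitting controlled by \eqref{tail.ass.di1}, and the same conclusion via the fast-geometric-convergence Lemma~\ref{lem.tech1}. Two small bookkeeping slips do not affect the argument: the interpolation yields the recursion power $1+sp/n$ (not $1+sp/(n+sp)$), and the $(\xi\omega)$-dependent ratio should appear as a factor in the recursion $A_{j+1}\le cM^{j}\,\bigl(\tfrac{\max\{1,\xi\omega\}}{\xi\omega}\bigr)^{1+sp/n}A_j^{1+sp/n}$ rather than in $\nu_0$ itself, which is why $\nu_0$ can be taken universal once the initial condition is weighted as in \eqref{eq:di1}.
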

\begin{proof}
We may assume $z_0=0$ by Lemma \ref{lem.trans}. We first handle the case when $u$ is a super-solution. 
Let us define sequences 
    \begin{equation}\label{seq.di1}
        k_i\coloneqq \mu_-+\frac{\xi\omega}2+\frac{\xi\omega}{2^{i+1}}
        ,\quad\rho_i\coloneqq\frac{\rho}2+\frac{\rho}{2^{i+1}}\quad\text{and}\quad\overline{\rho}_i\coloneqq\frac{\rho_i+\rho_{i+1}}2
    \end{equation}
    for any $i\geq 0$. We next define 
    \begin{equation*}
        A_i\coloneqq\frac{|\{Q_{\rho_i}^{(\theta)}\,:\,u<k_i\}|}{|Q_{\rho_i}^{(\theta)}|}
    \end{equation*}
    to claim that 
    \begin{equation}\label{ineq0.di1}
        A_{i+1}\leq cM^{i}\frac{\max\{1,\xi\omega\}^{1+sp/n}}{(\xi\omega)^{1+sp/n}}A_i^{1+sp/n}
    \end{equation}
    for some constant $c=c(n,s,p,\Lambda)\geq 1$ and $M=M(n,s,p,\Lambda)\geq 1$. To obtain \eqref{ineq0.di1}, we first observe from \cite[Lemma 2.3]{DinZhaZho21} that
    \begin{equation}\label{ineq01.di1}
    \begin{aligned}
       &\dashint_{Q^{(\theta)}_{\rho_{i+1}}}{(u-{k}_{i})^{p(1+2s/n)}_-}\,dz\\
       &\leq c\left(\dashint_{Q^{(\theta)}_{\rho_{i+1}}}\int_{B_{\rho_{i+1}}}\rho_i^{sp}\frac{|(u-{k}_{i})_-(x,t)-(u-{k}_{i})_-(y,t)|^p}{|x-y|^{n+sp}}\,dy\,dz+\dashint_{Q^{(\theta)}_{\rho_{i+1}}}(u-{k}_{i})^p_-\,dz\right)\\
       &\quad\times \left(\sup_{t\in I^{({\theta})}_{\rho_{i+1}}}\dashint_{B_{\rho_{i+1}}}(u-{k}_{i})^2_-\,dx\right)^{\frac{sp}{n}}\eqqcolon J_1\times J_2
    \end{aligned}
    \end{equation}
    for some constant $c=c(n,s,p)$. We will apply Lemma \ref{lem.energy} to \eqref{ineq01.di1}. To do this, let
    us choose a nonnegative cut-off function $\psi\in C_c^{\infty}(B_{\overline{\rho}_i})$ with $\psi\equiv1$ in $B_{\rho_{i+1}}$ and $\abs{\nabla\psi}\leq c(n)2^i/\rho$, and $\oldphi\in C^\infty(\bbR)$ with $\oldphi\equiv1$ on $t\geq -\rho_{i+1}^{sp}{\theta}$ and $\oldphi\equiv0$ on $t<-\overline{\rho}_{i}^{sp}{\theta}$.
    For $\phi=\psi\oldphi$, we compute 
    \begin{equation}\label{ineq02.di1}
    \begin{aligned}
        &\rho^{p}\dashint_{Q_{\rho_i}^{(\theta)}}(u-{k}_i)^{p}_{-}|\nabla \phi|^p\,dz+c\rho^{sp}\dashint_{Q_{\rho_i}^{(\theta)}}\left((u-{k}_i)_{-}^2+\int_{u}^{{k}_i}\beta_\epsilon'(\xi)(\xi-{k}_i)_{-}\,d\xi\right)\partial_t \phi^p\,dz\\
        &\leq c\left[2^{ip}\dashint_{Q_{\rho_i}^{(\theta)}}(u-{k}_i)^{p}_{-}\,dz+\frac{2^i}{\theta}\dashint_{Q_{\rho_i}^{(\theta)}}(u-{k}_i)_{-}^2+(u-{k}_i)_{-}\,dz\right]
    \end{aligned}
    \end{equation}
    for some constant $c=c(n,s,p)$, where we have used \eqref{cond.betap}.
    In light of the fact that 
    \begin{equation*}
        |y-x|\geq |y|/2^{i+1}\quad\text{for any }x\in B_{\overline{\rho}_i}\text{ and }y\in \bbR^n\setminus B_{\rho_i}
    \end{equation*}
    together with \eqref{tail.ass.di1}, we next obtain
    \begin{equation}\label{ineq03.di1}
    \begin{aligned}
        &\rho^{sp}\dashint_{I_{\rho_i}^{(\theta)}}\int_{\bbR^n\setminus B_{\rho_i}}\dashint_{B_{\rho_i}}\frac{(u-{k}_i)^{p-1}_{-}(y,t)\left((u-{k}_i)_{-}\phi^p\right)(x,t)}{|x-y|^{n+sp}}\,dx\,dy\,dt\\
        &\leq c\rho^{sp}2^{i(n+sp)}\dashint_{I_{\rho_i}^{(\theta)}}\int_{\bbR^n\setminus B_{\rho_i}}\dashint_{B_{\rho_i}}\frac{(u-{k}_i)^{p-1}_{-}(y,t)\left((u-{k}_i)_{-}\phi^p\right)(x,t)}{|y|^{n+sp}}\,dx\,dy\,dt\\
        &\leq c2^{i(n+sp)}(\xi\omega)^{p-1}\dashint_{Q_{\rho_i}^{(\theta)}}(u-{k}_i)_{-}\,dz.
    \end{aligned}
    \end{equation}
    By using \eqref{ineq02.di1}, \eqref{ineq03.di1}, ${\theta}=(\xi\omega)^{2-p}$, and Lemma \ref{lem.energy} with $k={k}_i$, $\phi=\psi\oldphi$ and $Q_{R,\mathcal{T}}(z_0)=Q_{\rho_i}^{(\theta)}$,
    \begin{align}\label{ineq1.di1}
        J_1&\leq c\left[\dashint_{Q_{\rho_i}^{({\theta})}}2^{i(n+sp+p)}(u-{k}_{i})_-^p+
    \frac{2^{i(n+sp)}}{{\theta}}\left((u-{k}_{i})_-^2+(1+\xi\omega)(u-{k}_{i})_-\right)\,dz\right]
    \end{align}
    holds and
    \begin{align}\label{ineq2.di1}
        (\rho_{i+1}^{sp}{\theta})^{-1}J_2^{\frac{n}{sp}}\leq c\rho_i^{-sp}\left[\dashint_{Q_{\rho_i}^{({\theta})}}2^{i(n+sp+p)}(u-{k}_{i})_-^p+\frac{2^{i(n+sp)}}{{\theta}}\left((u-{k}_{i})_-^2+(1+\xi\omega)(u-{k}_{i})_-\right)\,dz\right]
    \end{align}
    for some constant $c=c(n,s,p,\Lambda)$.
    By the choice of the constant ${\theta}=(\xi\omega)^{2-p}$, and $(u-k_i)_-\leq \xi\omega$ from \eqref{seq.di1}, we have 
    \begin{align*}
        \dashint_{Q_{\rho_i}^{({\theta})}}{\theta}^{-1}\left((u-{k}_{i})_-^2+(u-{k}_{i})_-\right)\,dz\leq c(\xi\omega)^{p-2}\left((\xi\omega)^{2}+(\xi\omega)\right)A_i.
    \end{align*}
    Plugging this into \eqref{ineq1.di1} and \eqref{ineq2.di1} with $(u-k_i)_-\leq \xi\omega$ again, we obtain
    \begin{align*}
        J_1\leq c2^{i(n+sp+p)}\max\{1,\xi\omega\}(\xi\omega)^{p-1}A_i
    \end{align*}
    and with $\theta=(\xi\omega)^{2-p}$,
    \begin{align*}
    J_2\leq c\left({\theta}2^{i(n+sp+p)}\max\{1,\xi\omega\}(\xi\omega)^{p-1}A_i\right)^{\frac{sp}n}\leq c\left(2^{i(n+sp+p)}\max\{1,\xi\omega\}(\xi\omega)A_i\right)^{\frac{sp}n}
    \end{align*}
    for some constant $c=c(n,s,p,\Lambda)$, which implies 
    \begin{align*}
        \dashint_{Q^{(\theta)}_{\rho_{i+1}}}{(u-{k}_{i})^{p(1+2s/n)}_-}\,dz\leq c2^{i(n+sp+p)(1+sp/n)}\max\{1,\xi\omega\}^{1+sp/n}(\xi\omega)^{p-1+sp/n}A_i^{1+sp/n}
    \end{align*}
together with \eqref{ineq01.di1}. Therefore, we have 
    \begin{equation}\label{ineq3.di1}
    \begin{aligned}
        A_{i+1}&\leq \frac{c2^{ip(1+2s/n)}}{(\xi\omega)^{p(1+2s/n)}}\dashint_{Q^{(\theta)}_{\rho_{i+1}}}{(u-{k}_{i})^{p(1+2s/n)}_-}\,dz\\
        &\leq {c2^{2ip(n+sp+p)(1+sp/n)}}\frac{\max\{1,\xi\omega\}^{1+sp/n}}{(\xi\omega)^{1+sp/n}}A_i^{1+sp/n}
    \end{aligned}
    \end{equation}
    for some constant $c=c(n,s,p,\Lambda)\geq 1$, which proves \eqref{ineq0.di1} with $M=2^{2p(n+sp+p)(1+sp/n)}\geq 1$. By applying Lemma \ref{lem.tech1} into \eqref{ineq3.di1}, we deduce the desired result for some $\nu_0=\nu_0(n,s,p,\Lambda)\in(0,1)$. Similarly, we also get the conclusion when $u$ is a sub-solution.    
\end{proof}
We give a remark for a similar version of Lemma \ref{lem.di1} when the singular $\beta(\cdot)$ has no effect.
\begin{remark}\label{rmk.nondeg}
    Suppose $\pm(\mu_{\pm}-\epsilon)\geq\omega/4$. By Lemma \ref{lem.nenergy} and \cite[Lemma 3.1]{Lia24}, we obtain that if
    \begin{align*}
        |\{ Q_{\rho}^{(\theta)}(z_0)\,:\pm(\mu_{\pm}-u)\leq \xi\omega\}|\leq \nu_0|Q_\rho^{(\theta)}|\quad\text{and}\quad( \rho/R)^{\frac{sp}{p-1}}\mathrm{Tail}((u-\mu_{\pm})_{\pm};\mathcal{Q})\leq \xi\omega,
    \end{align*}
    then 
    \begin{equation*}
       \pm(\mu_{\pm}-u)\geq\xi\omega/2\quad\text{in  }Q_{\rho/2}^{(\theta)}(z_0).
    \end{equation*}
\end{remark}
We next prove a variant of De Giorgi type lemma.
\begin{lemma}\label{lem.di2}
    Let $u$ be a locally bounded sub(super)-solution to \eqref{eq.int.appro} and let $\xi\in(0,1/4]$. Then there is a constant $\nu_d=\nu_d(n,s,p,\Lambda)\in(0,1)$ such that if 
\begin{equation}\label{ass.di2}
    \pm(\mu_{\pm}-u(\cdot,t_1))\geq \xi\omega\quad\text{in }B_\rho(x_0),
\end{equation}
then for $\Theta=(\xi\omega)^{2-p}$,
\begin{equation*}
    \pm(\mu_{\pm}-u)\geq \xi\omega/2\quad\text{in }B_{\rho/2}(x_0)\times(t_1,t_1+\nu_d\Theta\rho^{sp}],
\end{equation*}
whenever 
\begin{align*}
       (\rho/R)^{\frac{sp}{p-1}} \mathrm{Tail}((u-\mu_{\pm})_{\pm};\mathcal{Q})\leq \xi\omega
    \end{align*}
    and $B_\rho(x_0)\times (t_1,t_1+\nu_d\theta\rho^{sp}]\subset \mathcal{Q}$.
\end{lemma}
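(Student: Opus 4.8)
The plan is to run a De Giorgi iteration \emph{forward} in time, exploiting that the hypothesis \eqref{ass.di2} forces the truncated function to vanish on the initial time slice; this makes a purely spatial cut-off admissible, which in turn kills the term carrying $\partial_t\phi^p$ in Lemma \ref{lem.energy} — the very term that produced the $\max\{1,\xi\omega\}$ factors in Lemma \ref{lem.di1}. By Lemma \ref{lem.trans} we may assume $x_0=0$ and $t_1=0$, and we treat the supersolution case, the subsolution one being symmetric. Write $\Theta=(\xi\omega)^{2-p}$, $\tau\coloneqq\nu_d\Theta\rho^{sp}$, and for $i\ge 0$ set $k_i\coloneqq\mu_-+\tfrac{\xi\omega}{2}+\tfrac{\xi\omega}{2^{i+1}}$, $\rho_i\coloneqq\tfrac{\rho}{2}+\tfrac{\rho}{2^{i+1}}$, $Q_i\coloneqq B_{\rho_i}\times(0,\tau]$. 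The crucial choice is to normalize by the \emph{non-truncated} time length:
\[
A_i\coloneqq\frac{\big|\{z\in Q_i\,:\,u(z)<k_i\}\big|}{|B_{\rho_i}|\,\Theta\rho^{sp}},
\]
so that trivially $A_0\le\tau/(\Theta\rho^{sp})=\nu_d$. Moreover, $(u-k_i)_-\le k_i-\mu_-\le\xi\omega$ on $\mathcal{Q}$, and \eqref{ass.di2} gives $u(\cdot,0)\ge\mu_-+\xi\omega\ge k_i$ on $B_\rho\supset B_{\rho_i}$, hence $(u-k_i)_-(\cdot,0)\equiv0$ and $\int_{k_i}^{u(\cdot,0)}\beta_\epsilon'(\xi)(\xi-k_i)_-\,d\xi\equiv0$ on $B_{\rho_i}$.

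Next I would apply Lemma \ref{lem.energy} with $k=k_i$, on $B_{\rho_i}\times(0,\tau]$, using the time-independent cut-off $\phi=\psi_i(x)$, where $\psi_i\in C_c^\infty(B_{(\rho_i+\rho_{i+1})/2})$, $\psi_i\equiv1$ on $B_{\rho_{i+1}}$ and $|\nabla\psi_i|\le c\,2^i/\rho$. Then $\partial_t\phi^p\equiv0$ and the initial-slice term vanishes by the previous sentence; keeping on the left only $\sup_{0<t<\tau}\int_{B_{\rho_{i+1}}}(u-k_i)_-^2\,dx$ together with the Gagliardo energy of $(u-k_i)_-$ on $B_{\rho_{i+1}}$ (and discarding the nonnegative $\beta_\epsilon$-term), Lemma \ref{lem.energy} bounds these by $(\mathrm I)+(\mathrm{II})$, where $(\mathrm I)=c\rho_i^{p(1-s)}\iint_{Q_i}(u-k_i)_-^p|\nabla\psi_i|^p\,dz$ and $(\mathrm{II})$ is the lateral tail term. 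For $(\mathrm I)$, using $(u-k_i)_-\le\xi\omega$ supported on the bad set and the intrinsic relation $(\xi\omega)^p\Theta=(\xi\omega)^2$, one gets $(\mathrm I)\le c\,2^{ip}(\xi\omega)^2|B_{\rho_i}|A_i$. For $(\mathrm{II})$ I would repeat the computation leading to \eqref{ineq03.di1}: since $|x-y|\ge|y|/2^{i+1}$ for $x\in B_{(\rho_i+\rho_{i+1})/2}$, $y\notin B_{\rho_i}$, and since $(u-k_i)_-\le\xi\omega+(u-\mu_-)_-$ pointwise with $(u-\mu_-)_-\equiv0$ on $B_R\times I$, the tail hypothesis $(\rho/R)^{sp/(p-1)}\mathrm{Tail}((u-\mu_-)_-;\mathcal{Q})\le\xi\omega$ yields $\int_{\bbR^n\setminus B_{\rho_i}}|y|^{-n-sp}(u-k_i)_-^{p-1}(y,t)\,dy\le c\rho^{-sp}(\xi\omega)^{p-1}$, whence $(\mathrm{II})\le c\,2^{i(n+sp)}(\xi\omega)^2|B_{\rho_i}|A_i$.

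Feeding these two bounds, together with the elementary estimate $\iint_{Q_{i+1}}(u-k_i)_-^p\,dz\le(\xi\omega)^p|B_{\rho_i}|\Theta\rho^{sp}A_i$, into the parabolic Sobolev–type inequality \cite[Lemma 2.3]{DinZhaZho21} over $Q_{i+1}$ — exactly as in \eqref{ineq01.di1} — gives
\[
\dashint_{Q_{i+1}}(u-k_i)_-^{p(1+2s/n)}\,dz\le c\,M^i\,\nu_d^{-1}(\xi\omega)^{p(1+2s/n)}A_i^{1+sp/n},\qquad M=M(n,s,p,\Lambda),
\]
the single factor $\nu_d^{-1}$ arising from $\Theta\rho^{sp}/\tau$. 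Since $(u-k_i)_->k_i-k_{i+1}=\xi\omega\,2^{-(i+2)}$ on $\{Q_{i+1}:u<k_{i+1}\}$ and $|Q_{i+1}|/(|B_{\rho_{i+1}}|\Theta\rho^{sp})=\tau/(\Theta\rho^{sp})=\nu_d$, Chebyshev's inequality turns this into $A_{i+1}\le c\,\widetilde M^{\,i}A_i^{1+sp/n}$ with $c,\widetilde M$ depending only on $n,s,p,\Lambda$: the powers of $\xi\omega$ cancel because $p(1+2s/n)=p+2sp/n$, and the two occurrences of $\nu_d^{-1}$ cancel. Finally, Lemma \ref{lem.tech1} (with $\alpha=sp/n$, $b=\widetilde M$) applies: fixing $\nu_d=\nu_d(n,s,p,\Lambda)\in(0,1)$ so small that $\nu_d\le c^{-n/(sp)}\widetilde M^{-n^2/(sp)^2}$, the bound $A_0\le\nu_d$ forces $A_i\to0$, hence $|\{B_{\rho/2}\times(0,\tau]:u<\mu_-+\xi\omega/2\}|=0$, i.e. $u\ge\mu_-+\xi\omega/2$ a.e. on $B_{\rho/2}\times(0,\tau]$, which after undoing the translation is the claim. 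The only genuinely non-routine point is the intrinsic-scaling bookkeeping: normalizing $A_i$ by $\Theta\rho^{sp}$ rather than by $|Q_i|$ is what simultaneously makes $A_0\le\nu_d$ hold and makes the $\nu_d^{-1}$ factor cancel out of the recursion, and the identity $(\xi\omega)^{2-p}=\Theta$ must be used repeatedly to keep every power of $\xi\omega$ balanced; everything else is the standard De Giorgi machinery already carried out in Lemma \ref{lem.di1}.
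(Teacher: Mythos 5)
Your proof is correct and takes essentially the same route as the paper: the key observation that \eqref{ass.di2} kills the initial-slice term, allowing a purely spatial cut-off so that the $\partial_t\phi^p$ term disappears, is exactly what the paper notes before deferring the remaining forward-in-time De~Giorgi iteration to \cite[Lemma 3.2]{Lia24}. You have simply written out the details (the normalization by $\Theta\rho^{sp}$ making $A_0\leq\nu_d$ and the cancellation of the $\nu_d^{-1}$ factor, the use of the tail hypothesis exactly as in \eqref{ineq03.di1}) that the paper leaves implicit in the citation.
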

\begin{proof}
    By \eqref{ass.di2}, we have 
    \begin{equation*}
    \int_{B_\rho(x_0)\times \{t=t_1\}}\left((u-k)_{\pm}^2\pm\int_{k}^{u}\beta_\epsilon'(\xi)(\xi-k)_{\pm}\,d\xi\right)\phi^p\,dx=0
    \end{equation*}
    if $k=\mu_{\pm}\mp\xi\omega$.
    Therefore, by following the same lines as in the proof of \cite[Lemma 3.2]{Lia24}, we get the desired estimate.
\end{proof}

We provide more lemmas in case of $\pm(\mu_{\pm}\mp\omega/4)\geq \epsilon$. Since in this case, there is no effect of the singularity $\beta(\cdot)$, the following two lemmas are obtained by \cite[Lemma 3.3 and Lemma 3.4]{Lia24}.
\begin{lemma}\label{lem.di3}
    Let $u$ be a locally bounded sub(super)-solution to \eqref{eq.int.appro} with $\pm(\mu_{\pm}\mp\omega/4)\geq \epsilon$ and let $\alpha,\xi\in(0,1)$ and $\xi\in(0,1)$ be given. Then there are constants $\delta,\tau\in(0,1)$ depending only on $n,s,p,\Lambda$, and $\alpha$, but independent of $\xi$, such that if 
    \begin{equation*}
        |\{\pm(\mu_{\pm}-u(\cdot,t_1))\geq \xi\omega\}\cap B_{\rho}(x_0)|\geq \alpha|B_\rho|,
    \end{equation*}
    then 
    \begin{equation*}
        |\{\pm(\mu_{\pm}-u(\cdot,t))\geq \tau\xi\omega\}\cap B_{\rho}(x_0)|\geq \alpha/2|B_\rho|\quad\text{for any }t\in (t_1,t_1+\delta\theta\rho^{sp}]
    \end{equation*}
    whenever 
    \begin{equation*}
        (\rho/R)^{\frac{sp}{p-1}}\mathrm{Tail}((u-\mu_\pm)_\pm;\mathcal{Q})\leq \xi\omega
    \end{equation*}
    and $B_{\rho}(x_0)\times (t_1,t_1+\delta\theta\rho^sp]\subset\mathcal{Q}$,
    where $\theta=(\xi\omega)^{2-p}$. In particular, we observe $\tau=\frac{\alpha}{c}$ and $\delta=\frac{\alpha^{n+p+1}}{c}$ for some constant $c=c(n,s,p,\Lambda)$.
\end{lemma}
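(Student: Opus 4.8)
The plan is to run a De Giorgi–type measure propagation argument in time, using the logarithmic test function rather than a power test function, exactly as in the $p$-Laplacian case covered by \cite[Lemma 3.3]{Lia24}. The crucial observation, which has already been recorded in Lemma \ref{lem.nenergy}, is that under the hypothesis $\pm(\mu_\pm\mp\omega/4)\geq\epsilon$ the truncation level $k=\mu_\pm\mp\xi\omega$ (for $\xi\in(0,1/4]$, which is the relevant range after possibly shrinking $\xi$) satisfies $|k|\geq\epsilon$, so that $\int_k^u\beta_\epsilon'(\zeta)(\zeta-k)_\pm\,d\zeta=0$ and the energy estimate of Lemma \ref{lem.energy} reduces verbatim to the one for the parabolic fractional $p$-Laplacian. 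Consequently there is genuinely nothing new to prove beyond transcribing the local argument together with its tail bookkeeping; the statement is essentially a citation.

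First I would fix the super-solution case (the sub-solution case being symmetric). Set $k\coloneqq\mu_-+\xi\omega$ and, for $j\geq0$, $k_j\coloneqq\mu_-+(1+2^{-j})\tfrac{\xi\omega}{2}$, so $k_0=k$ and $k_j\downarrow \mu_-+\xi\omega/2$. Apply Lemma \ref{lem.energy} on the cylinder $B_\rho(x_0)\times(t_1,t_1+\delta\theta\rho^{sp}]$ with level $k_j$, a cutoff $\phi$ that is $1$ on $B_{(1/2+2^{-j-1})\rho}$, vanishes outside $B_{(1/2+2^{-j})\rho}$ and is time-independent (so the $\partial_t\phi^p$ term drops), and with the initial slice term vanishing by hypothesis \eqref{ass.di2}. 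The key point in the tail term is the scaling: since $Q^{(\theta)}_\rho\subset\mathcal Q=B_R(x_0)\times I$ and $\theta=(\xi\omega)^{2-p}$, the assumed bound $(\rho/R)^{sp/(p-1)}\mathrm{Tail}((u-\mu_-)_-;\mathcal Q)\leq\xi\omega$ converts, after multiplying by $\rho^{sp}$ and using $k_j\geq\mu_-$, into a contribution of order $2^{j(n+sp)}(\xi\omega)^{p-1}\rho^{sp}|B_\rho|\,\theta\,\dashint(u-k_j)_-\,dz$ — exactly the same power of $\xi\omega$ as all other terms, so after dividing by $\theta(\xi\omega)^p$ the tail is harmless. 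This reproduces a recursive inequality for the rescaled superlevel measures which, combined with the De Giorgi isoperimetric inequality to bring in the measure-theoretic hypothesis $|\{u(\cdot,t_1)<\mu_-+\xi\omega\}\cap B_\rho|\leq(1-\alpha)|B_\rho|$, yields, via the logarithmic estimate propagated over the time interval, the lower measure bound $|\{u(\cdot,t)<\mu_-+\tau\xi\omega\}\cap B_\rho|\leq(1-\alpha/2)|B_\rho|$ for $t\in(t_1,t_1+\delta\theta\rho^{sp}]$. Tracking the constants through the standard computation gives $\tau=\alpha/c$ and $\delta=\alpha^{n+p+1}/c$ with $c=c(n,s,p,\Lambda)$, and one checks these depend on $\alpha$ but not on $\xi$ because every occurrence of $\xi\omega$ has been normalized away by the intrinsic scaling $\theta=(\xi\omega)^{2-p}$.

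The only mild obstacle is the tail term, and even that is routine once one notices the scaling identity above; there is no singular contribution from $\beta_\epsilon$ to worry about by the choice of levels. Hence I would simply write: \emph{since $\pm(\mu_\pm\mp\omega/4)\geq\epsilon$, Lemma \ref{lem.nenergy} shows the energy estimates coincide with those of the parabolic fractional $p$-Laplacian, and the claim follows verbatim from \cite[Lemma 3.3]{Lia24}, with the tail handled by $(\rho/R)^{sp/(p-1)}\mathrm{Tail}((u-\mu_\pm)_\pm;\mathcal Q)\leq\xi\omega$ and $\theta=(\xi\omega)^{2-p}$.} The analogous reduction handles Lemma \ref{lem.di4} in the same stroke.
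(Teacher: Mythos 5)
Your proposal is correct and matches the paper's approach: the paper's entire ``proof'' is the remark preceding the lemma, namely that under $\pm(\mu_\pm\mp\omega/4)\geq\epsilon$ the singular term $\beta_\epsilon$ has no effect on the relevant truncated energies (Lemma \ref{lem.nenergy}), so the statement is imported verbatim from \cite[Lemma 3.3]{Lia24}. Your longer sketch of the underlying logarithmic/measure-propagation argument and tail bookkeeping is consistent with that citation but not required by the paper, and your caveat about $\xi\in(0,1/4]$ is in fact the regime in which the lemma is later applied (with $\xi=\xi_3/4$).
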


\begin{lemma}\label{lem.di4}
    Let $u$ be a locally bounded sub(super)-solution to \eqref{eq.int.appro} with $\pm(\mu_{\pm}\mp\omega/4)\geq \epsilon$. Let us fix $\xi\in(0,1/4]$, $\varsigma\in(0,1]$ and write $\theta=(\varsigma\xi\omega)^{2-p}$. If there is a constant $\alpha\in(0,1)$ such that 
    \begin{equation*}
        |\{\pm(\mu_{\pm}-u(\cdot,t))\geq\xi\omega\}\cap B_\rho(x_0)|\geq\alpha|B_\rho|\quad\text{for any }t\in(t_0-\theta\rho^{sp},t_0]
    \end{equation*}
    and
    \begin{equation*}
        (\rho/R)^{\frac{sp}{p-1}}\mathrm{Tail}((u-\mu_\pm)_\pm;\mathcal{Q})\leq \varsigma\xi\omega
    \end{equation*}
    with $Q_{\rho}^{(\theta)}(z_0)\subset \mathcal{Q}$, then for some $c=c(n,s,p,\Lambda)$, we have
    \begin{equation*}
        \left|\{\pm(\mu_{\pm}-u)\leq\varsigma\xi\omega/4\}\cap Q^{(\theta)}_\rho(z_0)\right|\leq \frac{c\varsigma^{p-1}}{\alpha}|Q^{(\theta)}_\rho|.
    \end{equation*}
\end{lemma}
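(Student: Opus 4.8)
The plan is to follow the measure-theoretic De Giorgi strategy used for fractional parabolic $p$-Laplacian equations (as in \cite[Lemma 3.4]{Lia24}), exploiting that the hypothesis $\pm(\mu_\pm\mp\omega/4)\geq\epsilon$ forces $\beta_\epsilon$ to be constant on the relevant range, so that by Lemma \ref{lem.nenergy} we may work with the clean energy estimates of the fractional $p$-Laplacian. Write $\ell\coloneqq\varsigma\xi\omega$. First I would record the appropriate energy (Caccioppoli) inequality from Lemma \ref{lem.energy} applied to the truncations $(u-k_j)_-$ (or $(u-k_j)_+$ in the supersolution case) at the levels
\begin{equation*}
    k_j\coloneqq \mu_\pm\mp\frac{\ell}{4}\mp\frac{\ell}{2^{j+2}},\qquad j=0,1,\dots
\end{equation*}
which interpolate between $\mu_\pm\mp\ell/4$ and $\mu_\pm\mp\ell/2$; because $k_j$ still lies above (resp.\ below) $\pm\epsilon$ thanks to the standing assumption, the $\beta_\epsilon'$-terms drop out. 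The tail smallness hypothesis $(\rho/R)^{sp/(p-1)}\mathrm{Tail}((u-\mu_\pm)_\pm;\mathcal Q)\leq\ell$ is exactly what is needed to absorb the nonlocal tail contribution at scale $\rho$ into a term of order $\ell^{p-1}$, after using the elementary inequality $|y-x|\geq |y|/2$ for $x\in B_\rho$ and $y\notin B_\rho$ (with the intrinsic waiting time $\theta=\ell^{2-p}$ making the scaling balance).

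The core of the argument is the measure-shrinking (isoperimetric / fast-geometric-convergence) step. Fix the cylinder $Q_\rho^{(\theta)}(z_0)$ and set
\begin{equation*}
    A_j(t)\coloneqq |\{x\in B_\rho(x_0)\,:\,\pm(\mu_\pm-u(x,t))< k_j\text{-level}\}|,\qquad A_j\coloneqq\int_{I_\rho^{(\theta)}(t_0)}A_j(t)\,dt.
\end{equation*}
From the hypothesis, for \emph{every} $t\in(t_0-\theta\rho^{sp},t_0]$ the complementary set where $\pm(\mu_\pm-u(\cdot,t))\geq\xi\omega$ has measure at least $\alpha|B_\rho|$; since $\xi\omega\geq\ell$, this set lies in the "good" region for all the levels $k_j$, so each slice $A_j(t)\leq(1-\alpha)|B_\rho|$. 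This uniform-in-time density deficit is what lets a De Giorgi isoperimetric inequality (the fractional Sobolev–Poincaré inequality applied on each time slice, combined with the energy bound integrated in time) produce a recursive estimate of the form
\begin{equation*}
    A_{j+1}\leq \frac{c\,2^{\sigma j}}{\alpha\,\ell^{p-1}}\,\Big(\text{RHS of energy estimate}\Big)\cdot(\text{something})\ \Longrightarrow\ A_{j+1}\leq \frac{c\,b^j}{\alpha}\,A_j^{1+\kappa}
\end{equation*}
for some $\kappa=\kappa(n,s,p)>0$, $b=b(n,s,p)\geq1$; one then invokes the fast geometric convergence Lemma \ref{lem.tech1}. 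Running $j$ from $0$ up to a fixed index $j_\ast$ (chosen depending only on $n,s,p,\Lambda$) gives $A_{j_\ast}\leq \tfrac12 A_0$-type decay, but more efficiently one argues directly: telescoping the energy estimate over the $j_\ast$ levels and using $k_{j}-k_{j+1}=\ell/2^{j+3}$ to convert the left-hand nonlocal Gagliardo seminorm of $(u-k_j)_-$ into a lower bound on $\big(A_{j_\ast}\big)^{(n+sp)/n}$, while the right-hand side is controlled by $\sum_j (\text{energy})\leq c\,\ell^{p-1}\rho^{-sp}\theta^{-1}|Q_\rho^{(\theta)}|\cdot\ell^{2-p}\cdot(\dots)$; after plugging $\theta=\ell^{2-p}$ and $|Q_\rho^{(\theta)}|=\theta\rho^{sp}|B_\rho|$ this collapses, up to constants, to $j_\ast\cdot A_{j_\ast}^{(n+sp)/n}\leq c\,\ell^{p-1}|B_\rho|^{(n+sp)/n}\rho^{\dots}$, and choosing $j_\ast$ proportional to $1/\alpha$ (or rather, iterating with the $1/\alpha$ factor tracked) yields $|A_{j_\ast}|\leq (c\varsigma^{p-1}/\alpha)|Q_\rho^{(\theta)}|$, which is the claim since $\{\pm(\mu_\pm-u)\leq\ell/4\}\subset\{u<k_{j_\ast}\text{-region}\}$.

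The main obstacle I expect is bookkeeping the intrinsic scaling so that the $\varsigma^{p-1}$ (not $\varsigma$ or $\varsigma^{p}$) emerges with the right power, and making sure the constant stays of the form $c\varsigma^{p-1}/\alpha$ rather than, say, $c\varsigma^{p-1}/\alpha^2$ — this requires being slightly careful that the $\alpha$-dependence from the time-uniform density bound enters only \emph{once}, linearly, in the final energy balance, and that the number of De Giorgi levels $j_\ast$ can be fixed \emph{independently} of $\varsigma$ (it can, because $\theta=\ell^{2-p}$ is matched to the level gaps $\ell/2^{j}$, so all $\varsigma$-dependence cancels in the recursion and survives only through the single overall factor $\ell^{p-1}/\theta=\ell^{p-1}/\ell^{2-p}=\ell^{2(p-1)-p+2}$ ... one must track this exponent honestly). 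Since all of this is routine once one observes that $\beta_\epsilon$ plays no role, the cleanest exposition is simply to cite \cite[Lemma 3.4]{Lia24}: the energy estimates of Lemma \ref{lem.nenergy} are identical to those for the fractional parabolic $p$-Laplacian in the regime $\pm(\mu_\pm\mp\omega/4)\geq\epsilon$, and the proof there carries over verbatim.
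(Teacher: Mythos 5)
Your final conclusion is correct and matches the paper exactly: since the hypothesis $\pm(\mu_\pm\mp\omega/4)\geq\epsilon$ lets Lemma \ref{lem.nenergy} eliminate the $\beta_\epsilon$-terms from the energy inequality, the energy estimates coincide with those of the plain fractional parabolic $p$-Laplacian, and the lemma is obtained directly from \cite[Lemma 3.4]{Lia24} — which is precisely what the paper states. One caveat worth flagging: the middle of your writeup, where you sketch a De Giorgi iteration with fast geometric convergence (Lemma \ref{lem.tech1}) and then a telescoping isoperimetric argument over levels $k_j$, describes the wrong machinery for this kind of measure-shrinking lemma. Fast geometric convergence yields pointwise bounds (as in Lemma \ref{lem.di1}), not measure estimates, and the nonlocal version of the measure-shrinking step does not need the discrete isoperimetric inequality at all: one applies the energy estimate \emph{once} at the single level $k=\mu_\mp\pm\varsigma\xi\omega/2$ and exploits the nonlocal cross term $\iint\frac{(u-k)_\pm^{p-1}(y)\,(u-k)_\mp(x)\phi^p(x)}{|x-y|^{n+sp}}\,dx\,dy$ on the left-hand side, bounded from below via the density hypothesis $|\{\pm(\mu_\pm-u)\geq\xi\omega\}\cap B_\rho|\geq\alpha|B_\rho|$, exactly as the paper does explicitly in the boundary analogue Lemma \ref{lem.di4b}. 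That is how the single linear factor $1/\alpha$ and the power $\varsigma^{p-1}$ emerge cleanly, without any iteration or level-summation.
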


\subsection{Oscillation estimate of solutions}\label{sec3.1}
In this subsection, we prove Theorem \ref{thm.intlast}. Before we give the proof, we first construct a sequence of cylinders $Q_i$ defined in \eqref{seq.cylinder} and find suitable oscillation estimates on these cylinders (see \eqref{ind.int}).

Let us fix $Q_\mathcal{R}(z_0)\subset \Omega_T$ and 
\begin{equation*}
    \omega_0\coloneqq \max\left\{\|u\|_{L^\infty(Q_\mathcal{R}(z_0))}+\mathrm{Tail}(u;Q_\mathcal{R}(z_0)),1\right\}.
\end{equation*}
Choose $\rho_0\leq\mathcal{R}$ such that
\begin{equation*}
    Q_{\rho_0}^{(\omega_0/4)^{2-p}}(z_0)\subset Q_{\mathcal{R}}(z_0).
\end{equation*}
We define 
\begin{equation}\label{defn.f}
    f_1(x)\coloneqq \frac{|x|^{M_1}}{N_1\omega_0^{M_1}}\quad\text{and}\quad f_2(x)=1-\frac{|x|^{M_2}}{N_2\omega_0^{M_2}},
\end{equation}
where $M_1,M_2,N_1,N_2\geq4$ are the constants determined later. 
Take sequences
\begin{equation}\label{seq.omega}
    \rho_{i+1}\coloneqq {f_1}(\omega_{i})\rho_i\quad\text{and}\quad \omega_{i+1}\coloneqq\max\left\{\omega_if_2(\omega_i),\omega_i/2^{s},4\epsilon\right\},
\end{equation}
where  
\begin{equation}\label{seq.cylinder}
    Q_i\coloneqq Q^{(\theta_i)}_{\rho_i}(z_0),\quad\theta_i\coloneqq(\omega_i/4)^{2-p},\quad\mu_{i}^+\coloneqq \sup_{Q_{i}}u\quad\text{and}\quad \mu_{i}^-\coloneqq \sup_{Q_{i}}u-\omega_i.
\end{equation}
We are now going to determine the constants $M_1,M_2,N_1$ and $N_2$ to see that
\begin{equation}\label{ind.int}
    \osc_{Q_i}u\leq \omega_i\quad\text{and}\quad Q_{i}\subset Q_{i-1} \quad\text{for each }i\geq0\quad(i\in\setN),
\end{equation}
where $Q_{-1}=\mathcal{Q}_{\mathcal{R}}(z_0)$.
Suppose that 
\begin{equation*}
    \osc_{Q_i}u\leq \omega_i\quad\text{and}\quad Q_{i}\subset Q_{i-1} 
\end{equation*}
hold for any $i=0,1,\ldots j-1$.
In this setting, we first observe that there is a constant $c_0=c_0(n,s,p)\geq 1$ which is independent of $i$ such that for any $i=0,1,\ldots j-1$,
\begin{equation}\label{ineq.tail}
     \mathrm{Tail}\left((u-\mu^{\pm}_{i})_{\pm};Q_i\right)\leq c_0\omega_i.
\end{equation}

To this end, we first see
\begin{align*}
\mathrm{Tail}\left((u-\mu^{\pm}_{i})_{\pm};Q_i\right)&\leq c
\sum_{k=1}^{i}(\rho_i/\rho_k)^{\frac{sp}{p-1}}\sup_{t\in I_i}\left(\dashint_{B_{k}\setminus B_{k-1}}(u-\mu^{\pm}_{i})_{\pm}^{p-1}\,dx\right)^{\frac1{p-1}}\\
&\quad+c(\rho_i/\rho_0)^{\frac{sp}{p-1}}\sup_{t\in I_i}\left(\rho_{0}^{sp}\int_{\bbR^n\setminus B_{0}}\frac{(u-\mu^{\pm}_{i})_{\pm}^{p-1}}{|y|^{n+sp}}\,dy\right)^{\frac1{p-1}} \coloneqq J_1+J_2,
\end{align*}
where we write 
\begin{equation}\label{eq:BIQ}
B_i\coloneqq B_{\rho_i},\quad I_i\coloneqq I_{\rho_i^{sp}}^{(\omega_i/4)^{2-p}},\quad \text{and}\quad Q_i:=B_i\times I_i.
\end{equation}
Since $f_1(\omega_i)\leq 1/2$ (from $M_1,N_1\geq 4$ and $\omega_i\leq \omega_0$) and $\omega_{i+1}\geq \omega_i/2^{s}$ by \eqref{seq.omega}, we estimate $J_1$ as 
\begin{align*}
    J_1\leq c\sum_{k=1}^i 2^{-(i-k)\frac{sp}{p-1}}\omega_{k}\leq c\sum_{k=1}^i 2^{-(i-k)\frac{sp}{p-1}}2^{s(i-k)}\omega_i\leq c\sum_{k=1}^i 2^{-(i-k)\frac{s}{p-1}}\omega_{i}\leq c\omega_i,
\end{align*}
where $c=c(n,s,p)$. Again $\omega_{i+1}\geq\omega_{i}/2^{s}$ by \eqref{seq.omega}, we estimate $J_2$ as $J_2\leq c2^{-\frac{isp}{p-1}}\omega_0\leq c\omega_i$, where $c=c(n,s,p)$. Combining the estimates $J_1$ and $J_2$ yields \eqref{ineq.tail}.
For convenience, we write 
\begin{equation}\label{eq:RQw}
R\coloneqq \rho_{j-1},\quad \mathcal{Q}\coloneqq Q_{j-1},\quad\omega\coloneqq \omega_{j-1},\quad \mu_+\coloneqq \mu_{j-1}^{+}\quad\text{and}\quad \mu_-\coloneqq \mu_{j-1}^{-}.
\end{equation}
In addition, by Lemma \ref{lem.trans}, we may assume $z_0=0$.

By the choice of $\mu_+,\mu_-$, and $\omega$ given in \eqref{seq.cylinder}, either
\begin{equation*}
    \mu_+-\omega/4\geq \omega/4\quad\text{or}\quad \mu_{-}+\omega/4\leq-\omega/4
\end{equation*}
holds by the definition of $\omega_j$. It suffices to show the case when $\mu_+-\omega/4\geq \omega/4$, as we can similarly prove the case that $\mu_{-}+\omega/4\leq-\omega/4$.
Therefore, $\mu_+-\omega/4\geq \epsilon$ by \eqref{seq.omega}.

Let $\gamma\geq2$ to be determined later depending only on $n,s,p$ and $\Lambda$ (see \eqref{condf.gamma}). We next let $\sigma\in(0,1]$ which will be chosen later depending only on $n,s,p$ and $\omega$ (see \eqref{condf.sigma}). We write
\begin{equation}\label{defn.zeta}
    \zeta\coloneqq 1/(p-1),\quad \rho\coloneqq \sigma R/\gamma\quad\text{and}\quad r\coloneqq \sigma^{\zeta }R/\gamma.
\end{equation}
Since $\sigma\leq1$ and $\zeta<1$, we have $\rho\leq r$.

\textbf{First alternative :}
We now assume that there is a time level 
\begin{align}\label{eq:timelevel}
t\in (-r^{sp}(\omega/4)^{2-p},0]
\end{align}
such that 
\begin{equation}\label{deg.ass2}
    \left|\left\{Q_{\rho}^{(\theta)}(0,t)\,:\,u-\mu_-\leq\omega/4\right\}\right|\leq \nu_0\overline{\omega}\left|Q_{\rho}^{(\theta)}(0,t)\right|,
\end{equation}
where the constant $\nu_0=\nu_0(n,s,p,\Lambda)$ is determined in Lemma \ref{lem.di1}, and
\begin{equation}\label{choi.thevome}
    \overline{\omega}\coloneqq  \left(\frac{\omega/4}{\max\{1,\omega/4\}}\right)^{\frac{n+sp}{sp}}\quad\text{and}\quad \theta=\left(\frac{\omega}{4}\right)^{2-p}.
\end{equation}
We now choose 
\begin{equation}\label{cond1.gamma}
    \gamma\geq \left(1+2^{\frac{p-1}{sp}}\right)(4c_0)^{\frac{p-1}{sp}}
\end{equation}
to see that
\begin{align}\label{eq:subsetQ}
Q_\rho^{(\theta)}(0,t)\subset \mathcal{Q}.
\end{align}
Indeed, by \eqref{defn.zeta} and \eqref{choi.thevome}, $Q^{(\theta)}_{\rho}(0,t)=B_{\rho}(0)\times (t-\theta\rho^{sp},t]=B_{\frac{\sigma R}{\gamma}}(0)\times (t-(\tfrac{\omega}{4})^{2-p}(\tfrac{\sigma R}{\gamma})^{sp},t]$ holds. Also, by \eqref{eq:timelevel} and \eqref{eq:RQw}, we obtain
\begin{align*}
\mathcal{Q}=Q_{i-1}=B_{\rho_{i-1}}(0)\times I^{(\omega_{i-1}/4)^{2-p}}_{\rho^{sp}_{i-1}}=B_{R}(0)\times I^{(\omega/4)^{2-p}}_{R^{sp}}=B_{R}(0)\times(-(\tfrac{\omega}{4})^{2-p}R^{sp},0).
\end{align*}
Then if $\gamma\geq 1+2^{\frac{1}{sp}}$, since $\sigma\leq 1$, we have $B_{\frac{\sigma R}{\gamma}}(0)\subset B_{R}(0)$. Moreover, by $t>-r^{sp}\left(\frac{\omega}{4}\right)^{2-p}$, \eqref{defn.zeta}, and $\zeta<1$, there holds
\begin{align*}
t-\left(\frac{\omega}{4}\right)^{2-p}\left(\frac{\sigma R}{\gamma}\right)^{sp}&\geq\left(\frac{\omega}{4}\right)^{2-p}\left[-r^{sp}-\left(\frac{\sigma R}{\gamma}\right)^{sp}\right]\\
&\geq\left(\frac{\omega}{4}\right)^{2-p}\left[-\left(\frac{\sigma^{\zeta}R}{\gamma}\right)^{sp}-\left(\frac{\sigma R}{\gamma}\right)^{sp}\right]\geq-\left(\frac{\omega}{4}\right)^{2-p}R^{sp}.
\end{align*}
Thus $(t-(\frac{\omega}{4})^{2-p}(\frac{\sigma R}{\gamma})^{sp},t)\subset (-(\frac{\omega}{4})^{2-p}R^{sp},0)$ so that \eqref{eq:subsetQ} is proved.

Also, the choice of $\gamma$ in \eqref{cond1.gamma} helps us to estimate
\begin{equation*}
    (\rho/R)^{\frac{sp}{p-1}}\mathrm{Tail}((u-\mu_-)_{-};\mathcal{Q})=(\sigma/\gamma)^{\frac{sp}{p-1}}\mathrm{Tail}((u-\mu_-)_{-};\mathcal{Q})\leq \omega/4,
\end{equation*}
which follows from \eqref{defn.zeta} and \eqref{ineq.tail}.
Then by Lemma \ref{lem.di1}, we get 
\begin{equation}\label{deg.res1}
    u-\mu_-\geq \omega/8\quad\text{in }Q^{(\theta)}_{\rho/2}(0,t).
\end{equation}
Hence, there is a constant
\begin{equation}\label{deg.xi0}
    \xi_1\geq\sigma^{\frac{sp}{p-1}}\nu_d^{\frac1{p-2}}2^{-\frac{sp}{p-2}}\coloneqq 
    \xi_0(\leq 1)
\end{equation}
such that
\begin{equation}\label{deg.time}
    t+\nu_d(\xi_1\omega/4)^{2-p}(\rho/2)^{sp}=0,
\end{equation}
where $\nu_d=\nu_d(n,s,p,\Lambda)\in(0,2^{-(p-2)}]$ is determined in Lemma \ref{lem.di2}. Here, note that
\begin{align*}
\nu_d(\xi_0\omega/4)^{2-p}(\rho/2)^{sp}&=\nu_d\left(\sigma^{\frac{sp}{p-1}}\nu_d^{\frac{1}{p-2}}2^{-\frac{sp}{p-2}}\frac{\omega}{4}\right)^{2-p}\left(\frac{\rho}{2}\right)^{sp}=\left(\frac{\sigma^{\zeta}\rho}{\sigma}\right)^{sp}\left(\frac{\omega}{4}\right)^{2-p}=r^{sp}\left(\frac{\omega}{4}\right)^{2-p}
\end{align*}
by \eqref{defn.zeta} and so $t+\nu_d(\xi_0\omega/4)^{2-p}(\rho/2)^{sp}>0$ by \eqref{eq:timelevel}. We now select 
\begin{equation}\label{cond2.gamma}
    \gamma\geq 2^{\frac{p-1}{p-2}}\left({8c_0}\nu_d^{-\frac1{p-2}}\right)^{\frac{p-1}{sp}}
\end{equation}
to see that
\begin{equation}\label{deg.ass4}
\begin{aligned}
    (\rho/R)^{\frac{sp}{p-1}}\mathrm{Tail}((u-\mu_-)_{-};\mathcal{Q})=(\sigma/\gamma)^{\frac{sp}{p-1}}\mathrm{Tail}((u-\mu_-)_{-};\mathcal{Q})&\leq \sigma^{\frac{sp}{p-1}} \nu_d^{\frac1{p-2}}2^{-\frac{sp}{p-2}}\omega/8\\
    &\leq \xi_0\omega/8\leq \xi_1\omega/8,
\end{aligned}
\end{equation}
where we have used \eqref{ineq.tail} and \eqref{deg.xi0}.
We first consider the case when $\xi_1\leq1$.
Since \eqref{deg.res1} implies
\begin{equation}\label{ineq.000}
u(\cdot,\tau)-\mu_{-} \geq\omega/8\geq\xi_1\omega/8\quad\text{in }B_{\rho/2} \quad\text{for any }\tau\in(t-(\rho/2)^{sp}\theta,t],
\end{equation}
by Lemma \ref{lem.di2} for $\Theta=(\xi_1\omega/8)^{2-p}$ together with \eqref{deg.time} and \eqref{deg.ass4}, we have 
\begin{equation}\label{deg.res2}
    u-\mu_-\geq \xi_1\omega/16\quad\text{in }B_{\rho/4}\times (-(\rho/2)^{sp}\theta,0].
\end{equation}
On the other hand, if $\xi_1\geq1$, then from \eqref{deg.time}, $t\in [-\nu_d(\omega/4)^{2-p}(\rho/2)^{sp},0]$. Thus there is a time level $\widetilde{t}\in(t-(\omega/4)^{2-p}(\rho/2)^{sp},t]$ such that $\widetilde{t}=-\nu_d(\omega/8)^{2-p}(\rho/2)^{sp}$. Hence, from \eqref{deg.res1} we have 
\begin{align*}
u(\cdot,\widetilde{t})-\mu_-\geq \omega/8\quad\text{in }B_{\rho/2}.
\end{align*}
We now use \eqref{deg.ass4} and $\xi_0\leq 1$ to see that
\begin{align*}
    (\rho/R)^{\frac{sp}{p-1}}\mathrm{Tail}((u-\mu_-)_{-};\mathcal{Q})=(\sigma/\gamma)^{\frac{sp}{p-1}}\mathrm{Tail}((u-\mu_-)_{-};\mathcal{Q})\leq \xi_0\omega/8\leq \omega/8.
\end{align*}
By Lemma \ref{lem.di2}, we get 
\begin{equation}\label{deg.res21}
    u-\mu_-\geq \omega/16\quad\text{in }B_{\rho/4}\times (-(\rho/2)^{sp}\theta,0].
\end{equation}
In light of \eqref{deg.res1}, \eqref{deg.res2} and \eqref{deg.res21} along with \eqref{deg.xi0}, we get
\begin{align}\label{deg.res.fir}
    \osc_{Q_{\rho/4}^{(\omega/4)^{2-p}}}u\leq \left(1-\frac{\xi_0}{16}\right)\omega= \left(1-\frac{\sigma^{\frac{sp}{p-1}}\nu_d^{\frac1{p-2}}2^{-\frac{sp}{p-2}}}{16}\right)\omega
\end{align}
if 
\begin{equation}\label{condf.gamma}
    \gamma\coloneqq \left(2^{\frac{p-1}{sp}}+2^{\frac{p-1}{p-2}}\right)\left(8c_0\nu_d^{-\frac1{p-2}}\right)^{\frac{p-1}{sp}},
\end{equation}
which makes $\gamma$ still satisfy \eqref{cond1.gamma} and \eqref{cond2.gamma}.

\textbf{Second alternative : }
If \eqref{deg.ass2} does not hold, then for any $t\in(-r^{sp}(\omega/4)^{2-p},0]$,
\begin{equation*}
    \left|\left\{Q_{\rho}^{(\theta)}(0,t)\,:\,u-\mu_-\leq\omega/4\right\}\right|\geq \nu_0\overline{\omega}\left|Q_{\rho}^{(\theta)}\right|,
\end{equation*}
which gives
\begin{equation}\label{eq:mu+u}
    \left|\left\{Q_{\rho}^{(\theta)}(0,t)\,:\,\mu_+-u\geq\omega/4\right\}\right|\geq \nu_0\overline{\omega}\left|Q_{\rho}^{(\theta)}\right|,
\end{equation}
where the constants $\theta$ and $\overline{\omega}$ are determined in \eqref{choi.thevome}.
Then for each $t\in(-r^{sp}\theta,0]$, there is a time level $\overline{t}\in (t-\rho^{sp}\theta,t-\nu_0\overline{\omega}\rho^{sp}\theta/2]$ such that 
\begin{equation*}
    \left|\left\{B_{\rho}\,:\,\mu_+-u(\cdot,\overline{t})\geq\omega/4\right\}\right|\geq \nu_0\overline{\omega}|B_{\rho}|/2.
\end{equation*}
Otherwise
\begin{align*}
    \left|\left\{Q_{\rho}^{(\theta)}(0,t)\,:\,\mu_+-u\geq\omega/4\right\}\right|&\leq \int_{t-\rho^{sp}\theta}^{t-\nu_0\overline{\omega}\rho^{sp}\theta/2}\left|\left\{B_{\rho}\,:\,\mu_+-u(\cdot,\tau)\geq\omega/4\right\}\right|\,d\tau+\nu_0\overline{\omega}\rho^{sp}\theta|B_{\rho}|/2\\
    &< \nu_0\overline{\omega}|Q^{(\theta)}_{\rho}|,
\end{align*}
so it contradicts \eqref{eq:mu+u}. Therefore, we have 
\begin{align*}
    \left|\left\{B_{\rho}\,:\,\mu_+-u(\cdot,\overline{t})\geq\xi_3\omega/4\right\}\right|\geq\left|\left\{B_{\rho}\,:\,\mu_+-u(\cdot,\overline{t})\geq\omega/4\right\}\right|\geq \nu_0\overline{\omega}|B_{\rho}|/2,
\end{align*}
where $\xi_3\in(0,1]$ is determined later (see \eqref{cond.xi2}). 
Let us recall $\mu_{+}-\omega/4\geq \omega/4\geq\epsilon$, which follows from the fact that $\omega\equiv\omega_{i-1}$ in \eqref{eq:RQw} and \eqref{seq.omega}. Thus by Lemma \ref{lem.di3}, we get
\begin{align}\label{eq:Brho}
    \left|\left\{B_{\rho}\,:\,\mu_+-u(\cdot,\widetilde{\mathfrak{t}})\geq\tau\xi_3\omega/4\right\}\right|\geq \nu_0\overline{\omega}|B_{\rho}|/4\quad\text{ for any }\widetilde{\mathfrak{t}}\in(\overline{t},\overline{t}+(\overline{\omega}^{n+p+1}/c_1)(\xi_3\omega/4)^{2-p}\rho^{sp}],
\end{align}
where 
\begin{equation}\label{tau.det}
    \tau=\overline{\omega}/c_1
\end{equation}
with $c_1=c_1(n,s,p,\Lambda)\geq2$ and $\overline{\omega}$ in \eqref{choi.thevome}, whenever 
\begin{align}\label{deg.ass5}
   (\rho/R)^{\frac{sp}{p-1}}\mathrm{Tail}((u-\mu_+)_+;\mathcal{Q})= (\sigma/\gamma)^{\frac{sp}{p-1}}\mathrm{Tail}((u-\mu_+)_+;\mathcal{Q})\leq \xi_3\omega/4.
\end{align}
We note that from $\overline{t}\in(t-\rho^{sp}\theta,t-\nu_0\overline{\omega}\rho^{sp}\theta/2]$, there is a positive number $\xi_3\in(0,1]$ such that
\begin{equation}\label{cond.xi2}
    \xi_3\geq \left(\overline{\omega}^{n+p+1}/c_1\right)^{\frac1{p-2}}\coloneqq \xi_2
\end{equation}
and 
\begin{equation}\label{deg.time2}
    \overline{0}+(\overline{\omega}^{n+p+1}/c_1)(\xi_3\omega/4)^{2-p}\rho^{sp}=0,
\end{equation}
where $\overline{0}$ means the evaluation $t=0$ in $\overline{t}\in(t-\rho^{sp}\theta,t-\nu_0\overline{\omega}\rho^{sp}\theta/2]$. We point out that since $\overline{\omega}\leq1$ from \eqref{choi.thevome}, we ensure that $\xi_3\in(0,1]$.
Since we arbitrarily choose $t\in (-r^{sp}(\omega/4)^{2-p},0]$, \eqref{deg.time2} and \eqref{eq:Brho} implies 
\begin{align*}
    \left|\left\{B_{\rho}\,:\,\mu_+-u(\cdot,t)\geq\tau\xi_2\omega/4\right\}\right|\geq \nu_0\overline{\omega}|B_{\rho}|/4
\end{align*}
for any $t\in(-r^{sp}(\omega/4)^{2-p},0]$. For $\varsigma\in(0,1]$ which will be determined later in \eqref{cond1.varsigma}, $\tau$ in \eqref{tau.det}, $\xi_2$ in \eqref{cond.xi2}, we assume 
\begin{equation}\label{deg.sigma2}
    \sigma\leq (\varsigma\tau\xi_2)^{\frac{p-1}{sp}}
\end{equation}
to see that $(-\rho^{sp}(\varsigma\tau\xi_2\omega/4)^{2-p},0]\subset(-r^{sp}(\omega/4)^{2-p},0]$. We now use Lemma \ref{lem.di4} with $\alpha$ and $\xi$ replaced by $\nu_0\overline{\omega}/4$ and $\tau\xi_2/4$, respectively, to see that 
\begin{equation*}
    \left|\left\{Q_{\rho}^{(\varsigma\tau\xi_2\omega/4)^{2-p}}\,:\, \mu_{+}-u\leq \varsigma\tau\xi_2\omega/4\right\}\right|\leq \frac{c_2\varsigma^{p-1}}{\overline{\omega}}|Q_{\rho}^{(\varsigma\tau\xi_2\omega/4)^{2-p}}|
\end{equation*}
for some constant $c_2=c_2(n,s,p)$, whenever
\begin{equation}\label{deg.ass7}
    (\rho/R)^{\frac{sp}{p-1}}\mathrm{Tail}((u-\mu_+)_+;\mathcal{Q})=(\sigma/\gamma)^{\frac{sp}{p-1}}\mathrm{Tail}((u-\mu_+)_+;\mathcal{Q})\leq \varsigma\tau\xi_2\omega/4.
\end{equation}
We now choose 
\begin{equation}\label{cond1.varsigma}
\varsigma\coloneqq \left({\overline{\omega}}\nu_0c_2^{-1}\right)^{\frac1{p-1}}\leq\frac{1}{2}
\end{equation}
to see that
\begin{equation*}
    \left|\left\{Q_{\rho}^{(\varsigma\tau\xi_2\omega/4)^{2-p}}\,:\, \mu_{+}-u\leq \varsigma\tau\xi_2\omega/4\right\}\right|\leq \nu_0|Q_{\rho}^{(\varsigma\tau\xi_2\omega/4)^{2-p}}|.
\end{equation*}
By Remark \ref{rmk.nondeg} with $\xi=\varsigma\tau\xi_2/4$, we get 
\begin{align*}
    \mu_+-u\geq \varsigma\tau\xi_2\omega/8\quad\text{in }Q^{(\varsigma\tau\xi_2\omega/4)^{2-p}}_{\rho/2},
\end{align*}
whenever 
\begin{equation}\label{deg.ass8}
     (\rho/R)^{\frac{sp}{p-1}}\mathrm{Tail}((u-\mu_+)_+;\mathcal{Q})=(\sigma/\gamma)^{\frac{sp}{p-1}}\mathrm{Tail}((u-\mu_+)_+;\mathcal{Q})\leq \varsigma\tau\xi_2\omega/4.
\end{equation}
By \eqref{deg.sigma2} and \eqref{condf.gamma}, we have
\eqref{deg.ass5}, \eqref{deg.ass7} and \eqref{deg.ass8} hold. Therefore, we have 
\begin{align}\label{deg.res.sec}
    \osc_{Q_{\rho/2}^{(\varsigma\xi_2\omega/4)^{2-p}}}u\leq  \omega(1-\varsigma\tau\xi_2/8),
\end{align}
if 
\begin{equation}\label{condf.sigma}
    \sigma\coloneqq(\varsigma\tau\xi_2)^{\frac{p-1}{sp}}.
\end{equation}
By \eqref{deg.res.fir} and \eqref{deg.res.sec} together with \eqref{condf.gamma} and \eqref{condf.sigma}, we have 
\begin{align*}
    \osc_{Q_{\rho/4}^{(\omega/4)^{2-p}}}u\leq \left(1-\frac{\varsigma\tau\xi_2\nu_d^{\frac1{p-2}}2^{-\frac{sp}{p-2}}}{16}\right)\omega.
\end{align*}
In addition, in light of \eqref{choi.thevome} along with the fact that $\max\{\omega/4,1\}\leq \omega_0$, \eqref{tau.det}, \eqref{cond.xi2} and \eqref{cond1.varsigma}, we obtain
\begin{align*}
    \osc_{Q_{\rho/4}^{(\omega/4)^{2-p}}}u\leq \left(1-\frac{{\omega}^M}{c\omega_0^M}\right)\omega
\end{align*}
for some constant $M=M(n,s,p,\Lambda)\geq4$ and $c=c(n,s,p,\Lambda)\geq 4$. Therefore, by recalling the definition of the function $f_2$ given in \eqref{defn.f}, we get
\begin{equation*}
      \osc_{Q_{\rho/4}^{(\omega/4)^{2-p}}}u\leq f_2(\omega)\omega
\end{equation*}
by taking $M_2=M$ and $N_2=4\max\{c,2^{s}/(2^s-1)\}$ which depend only on $n,s,p$ and $\Lambda$.  In addition, by the choice of $N_2$, we get 
\begin{align}\label{cond.f2}
    f_2(\omega)\omega\geq \omega/2^s.
\end{align}
In addition, by \eqref{condf.sigma}, \eqref{cond.xi2}, \eqref{tau.det}, \eqref{cond1.varsigma}, and \eqref{condf.gamma}, there is a suitable constant $C_1\geq 1$ depending only on $n,s,p$ and $\Lambda$ such that $\sigma/\gamma={\overline{\omega}}^{M}/C_1$. Therefore, by taking 
\begin{align}\label{eq:N1M1}
N_1=2^{\frac{p-2}{p}+2+2M}{C_1}(\geq 4)\quad\text{and}\quad M_1=\frac{M(n+sp)}{sp}(\geq 4),
\end{align}
together with $\omega_0\geq \max\{1,\omega/4\}$ we have
\begin{equation}\label{eq:f_1}
    f_1(\omega)=\frac{\omega^{M_1}}{2^{\frac{p-2}{p}+2+2M_1}{C_1}\omega_0^{M_1}}\leq \frac{\sigma}{2^{\frac{p-2}{p}+2}\gamma}.
\end{equation}
Indeed, since $\frac{\omega}{4\omega_0}\leq\frac{\omega}{\max\{4,\omega\}}$, we obtain
\begin{align*}
\frac{\omega^{M_1}}{2^{2M_1}{C_1}\omega_0^{M_1}}\leq \dfrac{1}{C_1}\left(\dfrac{\omega}{\max\{2^2,\omega\}}\right)^{\frac{n+sp}{sp}M}=\dfrac{\overline{\omega}^M}{C_1}=\frac{\sigma}{\gamma}.
\end{align*}
Then with \eqref{eq:f_1}, by using $\omega_j\leq\omega_{j-1}=\omega$ and \eqref{seq.omega}, together with $N_1\geq 4$ and $\omega_{j-1}\leq\omega_0$, we get
\begin{equation*}
    \osc_{Q^{(\omega_j/4)^{2-p}}_{f_1(\omega_{j-1})\rho_{j-1}}}u\leq \osc_{Q_{\rho/4}^{(\omega/4)^{2-p}}}u\leq f_2(\omega)\omega=f_2(\omega_{j-1})\omega_{j-1}\leq\omega_j\quad\quad\text{and}\quad\quad Q_{j}\subset Q_{j-1}.
\end{equation*}
Indeed, note that $Q_{j}=B_j\times I_j=B_{\rho_j}\times I_{\rho_j^{sp}}^{(\omega_j/4)^{2-p}}$. Since $\omega_{j}\leq \omega_{j-1}$ and \eqref{defn.f}, it suffices to show $\rho_{j}^{sp}(\omega_j/4)^{2-p}\leq \rho_{j-1}^{sp}(\omega_{j-1}/4)^{2-p}$. 
Note from \eqref{defn.f} with $N_1\geq 4$ and \eqref{seq.omega} together with the fact that $\omega_{j}\geq \omega_{j-1}/2^s$,
\begin{align}\label{eq:rho.j}
\begin{split}
\rho_{j}^{sp}(\omega_j/4)^{2-p}=(f_1(\omega_{j-1})\rho_{j-1})^{sp}(\omega_j/4)^{2-p}&\leq (f_1(\omega_{j-1})\rho_{j-1})^{sp}(\omega_{j-1}/4)^{2-p}2^{s(p-2)}\\
&\leq\rho_{j-1}^{sp}(\omega_{j-1}/4)^{2-p}
\end{split}
\end{align}
holds. Therefore, we have proved \eqref{ind.int}.

We are now ready to prove Theorem \ref{thm.intlast}. 
\begin{proof}[Proof of Theorem \ref{thm.intlast}.]
Using \eqref{ind.int}, we first inductively prove that there is a constant $\varsigma=\varsigma(n,s,p,\Lambda)$ such that
\begin{align}\label{ineq3.uni}
    \osc_{Q_{i}}u\leq  \omega_{0}(1+i)^{-\varsigma}+4\epsilon
\end{align}
for some constant $c=c(n,s,p,\Lambda)$, where the cylinder $Q_i$ is determined in \eqref{seq.cylinder}.
By Lemma \ref{lem.iter}, there is a constant $\varsigma=\varsigma(n,s,p,\Lambda)\in(0,1)$ such that the sequence $a_i\coloneqq \omega_0(1+i)^{-\varsigma}$ satisfies
\begin{equation}\label{ineq1.uni}
    a_i\geq a_{i-1}f_2\left(a_{i-1}\right)\quad\text{for any }i\geq1.
\end{equation}
We now note that there is a positive integer $i_0$ such that $\omega_{i_0}=4\epsilon$. If not, we have $\omega_{i+1}=\omega_if_2(\omega_i)$ for any $i\geq 1$, as \eqref{seq.omega} and \eqref{cond.f2}. Then, by \eqref{ineq1.uni}, we have $\omega_{i}\leq \omega_0(1+i)^{-\varsigma}$. Therefore, if $i$ is sufficiently large, then $\omega_{i+1}\leq 4\epsilon$, which gives a contradiction.

Let us take the smallest positive integer $i_0$ such that $\omega_{i_0}=4\epsilon$. Then, we have $\osc_{Q_i}u\leq \omega_0(1+i)^{-\varsigma}$ for any $i\leq i_0$ and $\osc_{Q_i}u\leq 4\epsilon$ for $i>i_0$. This implies \eqref{ineq3.uni}.

Using \eqref{ineq3.uni}, we are now ready to prove \eqref{ineq.intlast}. Let take $r\in(0,\rho_0]$. Then there is a nonnegative integer $j$ such that 
\begin{equation*}
    \rho_{j+1}<r\leq \rho_j.
\end{equation*}
We next observe from \eqref{seq.omega} that $\omega_{j}\geq \omega_0/2^{sj}$ and
\begin{equation}\label{ineq2.cont}
    \rho_{j+1}\geq \left(\frac{\omega_j}{\omega_0}\right)^{M_1}\frac{1}{N_1}\rho_j\geq \cdots\geq \left(\frac{1}{N_1}\right)^{j+1}\left[\prod_{i=0}^{j}\left(\frac{1}{2^{si}}\right)^{M_1}\right]\rho_0,
\end{equation}
where the constants $N_1$ and $M_1$ fixed in \eqref{eq:N1M1} depend only on $n,s,p$ and $\Lambda$.
Using \eqref{ineq2.cont}, we get
\begin{equation*}
    \frac{\rho_0}{(2N_1)^{{2M_1j^2}}}\leq \frac{\rho_0}{N_1^{j+1}2^{\frac{M_1j(j+1)}{2}}}\leq \rho_{j+1}<r.
\end{equation*}
By taking logarithm on both sides together with a few computations, we have $j^2\geq c\ln(\rho_0/r)$, where $c=c(n,s,p,\Lambda)$. Using this along with \eqref{ineq3.uni}, we obtain
\begin{equation*}
    \osc_{Q_r^{(\omega_0/4)^{2-p}}}u\leq \osc_{Q_{{j}}}u\leq \omega_0(1+j)^{-\varsigma}+4\epsilon\leq c\omega_0\left(1+\ln(\rho_0/r)\right)^{-\varsigma/2}+4\epsilon
\end{equation*}
for some constant $c=c(n,s,p,\Lambda)$, which gives \eqref{ineq.intlast}. This completes the proof.
\end{proof}

\section{Estimates on the Lateral Boundary}\label{sec:4}
Let us fix $\epsilon\in(0,1)$. In this section, we are going to study a bounded weak solution
\begin{equation*}
    u\equiv u_\epsilon\in L^p(0,T;W^{s,p}(\Omega'))\cap C(0,T;L^2(\Omega))\cap L^\infty(0,T;L^{p-1}_{sp}(\bbR^n))
\end{equation*}to 
    \begin{equation}\label{eq.approxb}
\left\{
\begin{alignedat}{3}
\partial_t (u+\beta_\epsilon(u))+\mathcal{L}u&= 0&&\qquad \mbox{in  $\mathcal{Q}\cap \Omega_T$}, \\
u&=g&&\qquad  \mbox{in $\mathcal{Q}\setminus \Omega_T$},
\end{alignedat} \right.
\end{equation}
where $\Omega'\Supset\Omega$ and $\mathcal{Q}\coloneqq B_{\mathcal{R}}(x_0)\times (t_1-{T}_0,t_1]$ with $x_0\in \partial\Omega$, $\mathcal{R}\in(0,1]$, $t_1\in\setR$, $T_0>0$ and $\mathcal{Q}\subset\Omega'_T$. 
Let us fix constants 
\begin{align*}
    \mu_+\geq \sup_{\mathcal{Q}}u,\,\quad\mu_{-}\leq \inf_{\mathcal{Q}}u,\quad\text{and}\quad\omega\geq\mu_+-\mu_-.
\end{align*}
We next assume that the complement of $\Omega$ satisfies the measure density condition with $\alpha_0\in(0,1)$ (see \eqref{ass.density} for the precise definition of this). 
We assume $g\in C\left(\overline{\mathcal{Q}}\right)$ and there is a non-decreasing function $\omega_g:\bbR_+\to\bbR_+$ with $\omega_g(0)=0$ and
\begin{equation*}
    |g(z_1)-g(z_2)|\leq \omega_g\left(|x_1-x_2|+|t_1-t_2|^{\frac1{sp}}\right)\quad\text{for any }z_1,z_2\in\mathcal{Q}.
\end{equation*}
For $z_0\in\setR^{n+1}$ and $R,\mathcal{T}>0$, we fix 
\begin{equation*}
    Q_{R,\mathcal{T}}(z_0)\subset\mathcal{Q}.
\end{equation*}

We now observe the following energy estimates near the boundary.
\begin{lemma}\label{lem.energy2}
   Let $u$ be a sub(super)-solution to \eqref{eq.approxb}. Let $\phi$ be a positive smooth function satisfying $\phi(\cdot,t)\equiv0$ on $\bbR^n\setminus B_r(x_0)$ with $r<R$. Then there is a constant $c=c(n,s,p,\Lambda)$ such that
    \begin{equation}\label{ineq.energy2}
    \begin{aligned}
        &\sup_{t_0-\mathcal{T}<t<t_0}\left[\int_{B_R(x_0)}\phi^p(u-k)_{\pm}^{2}\,dx\pm\int_{B_R(x_0)}\phi^p\int_{k}^{u}\beta_\epsilon'(\xi)(\xi-k)_{\pm}\,d\xi\,dx\right]\\
        &\quad+ \int_{t_0-\mathcal{T}}^{t_0}\int_{B_R(x_0)}\int_{B_R(x_0)}\frac{|((u-k)_{\pm}\phi)(x,t)-((u-k)_{\pm}\phi)(y,t)|^p}{|x-y|^{n+sp}}\,dx\,dy\,dt\\
        &\quad+ \int_{t_0-\mathcal{T}}^{t_0}\int_{B_R(x_0)}\int_{B_R(x_0)}\frac{(u-k)_{\mp}^{p-1}(y,t)((u-k)_{\pm}\phi^p)(x,t)}{|x-y|^{n+sp}}\,dx\,dy\,dt\\
        &\leq cR^{p-sp}\int_{Q_{R,\mathcal{T}}(z_0)}(u-k)_{\pm}^p|\nabla\phi|^p\,dz
    + c\int_{Q_{R,\mathcal{T}}(z_0)}\left((u-k)_{\pm}^2\pm\int_{k}^{u}\beta_\epsilon'(\xi)(\xi-k)_{\pm}\,d\xi\right)\partial_t \phi^p\,dz\\
        &\quad+c\int_{t_0-\mathcal{T}}^{t_0}\int_{\bbR^n\setminus B_R(x_0)}\int_{B_R(x_0)}\frac{(u-k)_{\pm}^{p-1}(y,t)((u-k)_{\pm}\phi^p)(x,t)}{|x-y|^{n+sp}}\,dx\,dy\,dt\\
        &\quad +\int_{B_R(x_0)\times \{t=t_0-\mathcal{T}\}}\left((u-k)_{\pm}^2\pm\int_{k}^{u}\beta_\epsilon'(\xi)(\xi-k)_{\pm}\,d\xi\right)\phi^p\,dx,
    \end{aligned}
    \end{equation}
    whenever
\begin{equation}\label{k.energy2}
\begin{aligned}
    \begin{cases}
        k\geq \sup\limits_{\mathcal{Q} }g&\quad\mbox{if $u$ is a sub-solution}\\
        k\leq \inf\limits_{\mathcal{Q}}g&\quad\mbox{if $u$ is a super-solution}.
    \end{cases}
\end{aligned}
\end{equation}
\end{lemma}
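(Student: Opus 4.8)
The plan is to repeat, almost verbatim, the proof of the interior estimate Lemma~\ref{lem.energy}; the only genuinely new point is that the natural test function $(u-k)_\pm\phi^p$ need not be admissible in the boundary weak formulation of \eqref{eq.approxb}, and this is exactly what the sign restriction \eqref{k.energy2} repairs.

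\textit{Admissibility.} Suppose first that $u$ is a sub-solution and $k\geq\sup_{\mathcal{Q}}g$. Since $\phi(\cdot,t)\equiv0$ outside $B_r(x_0)$ with $r<R$, the function $(u-k)_+\phi^p$ is spatially supported in $B_r(x_0)$ at each time, and on the part of $B_r(x_0)\times(t_0-\mathcal{T},t_0)$ lying in $\bbR^n\setminus\Omega_T$ one has $u=g\leq k$, hence $(u-k)_+\equiv0$ there. Therefore $(u(\cdot,t)-k)_+\phi^p(\cdot,t)\in X_0^{s,p}(\Omega\cap B_{\mathcal{R}}(x_0))$ for a.e.\ $t$; after the standard Steklov--average regularization of the equation in time (needed because $\partial_t(u+\beta_\epsilon(u))$ is only a distribution) this makes $(u-k)_+\phi^p$, restricted to $(t_0-\mathcal{T},\tau)$, a legitimate test function for each $\tau\in(t_0-\mathcal{T},t_0)$. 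The super-solution case with $k\leq\inf_{\mathcal{Q}}g$ is symmetric, with $(u-k)_-$ in place of $(u-k)_+$.

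\textit{Testing.} Inserting $(u-k)_\pm\phi^p$ into the weak formulation, the parabolic term $\partial_t(u+\beta_\epsilon(u))$ is treated exactly as in Lemma~\ref{lem.energy} through \cite[Lemma~2.1]{BarKuuUrb14} together with $\beta_\epsilon'\geq0$ and $\int_{\bbR}\beta_\epsilon'=1$ from \eqref{cond.betap}; this produces the supremum-in-time expression on the left of \eqref{ineq.energy2}, the $\partial_t\phi^p$ term and the slice term at $t=t_0-\mathcal{T}$ on the right. For the nonlocal term one splits $\bbR^n\times\bbR^n$ into $B_R\times B_R$, the two symmetric mixed regions $B_R\times(\bbR^n\setminus B_R)$ and $(\bbR^n\setminus B_R)\times B_R$, and $(\bbR^n\setminus B_R)\times(\bbR^n\setminus B_R)$, the last giving nothing since $\phi$ vanishes on $\bbR^n\setminus B_r$ there. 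On $B_R\times B_R$ the pointwise inequality of \cite[Proposition~2.1]{Lia24} bounds the integrand from below by a multiple of $\frac{|((u-k)_\pm\phi)(x)-((u-k)_\pm\phi)(y)|^p+(u-k)_\mp^{p-1}(y)((u-k)_\pm\phi^p)(x)}{|x-y|^{n+sp}}$ minus an error controlled by $(u-k)_\pm^p\,\frac{|\phi(x)-\phi(y)|^p}{|x-y|^{n+sp}}$, whose double integral is $\lesssim R^{p-sp}\int(u-k)_\pm^p|\nabla\phi|^p$; this yields the two seminorm-type terms on the left and the gradient term on the right. On the mixed regions, where $\phi(y)=0$, the same elementary inequality (discarding a non-negative contribution) bounds the integrand from below by $-\,c\,(u-k)_\pm^{p-1}(y)((u-k)_\pm\phi^p)(x)/|x-y|^{n+sp}$, producing the remaining tail term on the right of \eqref{ineq.energy2}. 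Taking the supremum over $\tau\in(t_0-\mathcal{T},t_0)$ and absorbing constants finishes the proof.

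\textit{Main obstacle.} The only place where the boundary enters is the admissibility step: for $k$ strictly between $\inf_{\mathcal{Q}}g$ and $\sup_{\mathcal{Q}}g$ the function $(u-k)_\pm\phi^p$ generally does not vanish on $B_r(x_0)\setminus\Omega$, hence is not in $X_0^{s,p}$ and cannot be used, and restricting $k$ as in \eqref{k.energy2} is precisely what restores a valid test function. Once this is in place the testing step is literally the interior computation of Lemma~\ref{lem.energy}, so the near-boundary energy estimate differs from the interior one only through this constraint on the truncation level.
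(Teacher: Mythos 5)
Your proof is correct and follows exactly the same route as the paper: observe that the sign restriction \eqref{k.energy2} forces $(u-k)_\pm\phi^p$ to vanish on $B_r(x_0)\setminus\Omega$ so that it is an admissible test function in $X_0^{s,p}(\Omega\cap B_{\mathcal{R}}(x_0))$, and then repeat verbatim the computation of Lemma~\ref{lem.energy} (via \cite[Lemma~2.1]{BarKuuUrb14} for the time term and \cite[Proposition~2.1]{Lia24} for the nonlocal term). Your exposition merely spells out the details that the paper leaves implicit.
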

\begin{proof}
    By \eqref{k.energy2}, we are able to test $(u-k)_{\pm}\phi^p$ to \eqref{eq.approxb}. Therefore, as in the proof of Lemma \ref{lem.energy}, we have the desired energy inequalities near the boundary.
\end{proof}
First, we prove a measure shrinking lemma near the boundary. In particular, we use a different kind of intrinsic cylinder which is of the form $Q_\rho^{(\overline{\theta})}$, where $\overline{\theta}\eqsim \omega^{1-p}$. Since we employ the third term given in the left-hand side of \eqref{ineq.energy2} which specially appears in nonlocal problems, such a cylinder is useful for dealing with the singular term $\beta_\epsilon$.

\begin{lemma}\label{lem.di4b}
    Let $u$ be a sub(super)-solution to \eqref{eq.approxb} with the measure density condition \eqref{ass.density} for $\Omega$. Let us fix $\xi,\varsigma\in(0,1/4]$ and write
    \begin{equation}\label{defn.thetab1}
    \begin{aligned}
        \overline{\theta}\coloneqq\
            (\varsigma\xi\omega)^{1-p}.
    \end{aligned}
    \end{equation} 
    If 
    \begin{equation}\label{ass.dib4}
        (\rho/R)^{\frac{sp}{p-1}}\mathrm{Tail}((u-\mu_\pm)_\pm;\mathcal{Q})\leq \varsigma\xi\omega\quad\text{and}\quad\xi\omega\leq \pm\mu_{\pm}-\sup_{\mathcal{Q}}(\pm g)
    \end{equation}
    with $Q_{\rho}^{(\overline{\theta})}(z_0)\subset \mathcal{Q}$, then 
    \begin{equation*}
        \left|\{\pm(\mu_{\pm}-u)\leq\varsigma\xi\omega\}\cap Q^{(\overline{\theta})}_\rho(z_0)\right|\leq {c\varsigma^{{p-1}}}\max\{1,\varsigma\xi\omega\}\left|Q^{(\overline{\theta})}_\rho\right|
    \end{equation*}
    for some constant  $c=c(n,s,p,\Lambda,\alpha_0)$, where the constant $\alpha_0$ is determined in \eqref{ass.density}. 
\end{lemma}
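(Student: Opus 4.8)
The plan is to run a De Giorgi measure-shrinking argument localized to the intrinsic cylinder $Q_\rho^{(\overline{\theta})}(z_0)$, but — unlike the interior counterpart Lemma \ref{lem.di4} — to draw information \emph{only} from the nonlocal positivity term on the left-hand side of the energy inequality \eqref{ineq.energy2}, feeding the measure density condition \eqref{ass.density} into it. It suffices to treat the super-solution case, the sub-solution one being symmetric. I would fix the level $k\coloneqq\mu_-+2\varsigma\xi\omega$; since $\varsigma\le 1/4$, the second inequality in \eqref{ass.dib4} (in the "$-$" case it reads $\inf_{\mathcal{Q}}g\ge\mu_-+\xi\omega$) gives $k\le\mu_-+\tfrac12\xi\omega\le\inf_{\mathcal{Q}}g$, so \eqref{k.energy2} holds and Lemma \ref{lem.energy2} applies at level $k$. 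Three elementary facts will be used repeatedly: $0\le(u-k)_-\le 2\varsigma\xi\omega$ on $\mathcal{Q}$ (as $u\ge\mu_-$ there), $(u-k)_-\ge\varsigma\xi\omega$ on the set $\{u\le\mu_-+\varsigma\xi\omega\}$ whose measure we want to bound, and $(u-\mu_-)_-\equiv 0$ on $\mathcal{Q}$.

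Next I would choose a cut-off $\phi=\psi(x)\oldphi(t)$ with $\psi\equiv 1$ on $B_{\rho/2}(x_0)$, $\operatorname{supp}\psi\Subset B_\rho(x_0)$, $|\nabla\phi|\le c/\rho$, and $\oldphi$ nondecreasing in $t$, vanishing on the bottom face of $Q_\rho^{(\overline{\theta})}(z_0)$ and equal to $1$ near its top, with $|\partial_t\oldphi^p|\le c/(\overline{\theta}\rho^{sp})$, so that the last ("initial slice") term in \eqref{ineq.energy2} drops out. On the left-hand side of \eqref{ineq.energy2} I would keep only
\[
\mathcal{I}\coloneqq\int_{I}\int_{B_\rho(x_0)}\int_{B_\rho(x_0)}\frac{(u-k)_+^{p-1}(y,t)\,\big((u-k)_-\phi^p\big)(x,t)}{|x-y|^{n+sp}}\,dx\,dy\,dt,
\]
where $I$ is the time interval of $Q_\rho^{(\overline{\theta})}(z_0)$, and discard the remaining (nonnegative) terms — in particular the energy term, which carries the singular integral $\int_k^u\beta_\epsilon'(\xi)(\xi-k)_-\,d\xi$. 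On $B_{\rho/2}(x_0)\setminus\Omega$ one has $u=g\ge\inf_{\mathcal{Q}}g\ge\mu_-+\xi\omega$, hence $(u-k)_+\ge\tfrac12\xi\omega$ there, and by \eqref{ass.density} this set has measure $\ge c\,\alpha_0\,\rho^n$ on every time slice. Pairing, slice by slice, the points $x$ of the bad set $\{u(\cdot,t)\le\mu_-+\varsigma\xi\omega\}\cap B_{\rho/2}(x_0)$ against $y\in B_{\rho/2}(x_0)\setminus\Omega$ and using $|x-y|\le\rho$ will give
\[
\mathcal{I}\ \ge\ c\,\alpha_0\,(\xi\omega)^{p-1}\,(\varsigma\xi\omega)\,\rho^{-sp}\,\big|\{u\le\mu_-+\varsigma\xi\omega\}\cap Q_{\rho/2}^{(\overline{\theta})}(z_0)\big|.
\]
This is exactly the place where the height $\overline{\theta}=(\varsigma\xi\omega)^{1-p}$, rather than the interior height $(\varsigma\xi\omega)^{2-p}$, must be used: with this choice $\mathcal{I}$ ends up dominating every contribution of $\beta_\epsilon$ on the right-hand side.

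It then remains to bound the three surviving terms on the right of \eqref{ineq.energy2}. Using $R=\rho$, $(u-k)_-\le 2\varsigma\xi\omega$ and $|\nabla\phi|\le c/\rho$, the gradient term is $\le c\,\rho^{p(1-s)}(\varsigma\xi\omega)^p\rho^{-p}\,|Q_\rho^{(\overline{\theta})}|=c\,(\varsigma\xi\omega)\,|B_\rho|$. For the time term, from $\beta_\epsilon'\ge0$, $\int_{\bbR}\beta_\epsilon'=1$ and monotonicity of $(\cdot)_-$ one gets $0\le(u-k)_-^2-\int_k^u\beta_\epsilon'(\xi)(\xi-k)_-\,d\xi\le(u-k)_-^2+(u-k)_-\le c\,\varsigma\xi\omega\max\{1,\varsigma\xi\omega\}$, while $\int_{Q_\rho^{(\overline{\theta})}}\partial_t\phi^p\,dz\le|B_\rho|$, so this term is $\le c\,\varsigma\xi\omega\max\{1,\varsigma\xi\omega\}\,|B_\rho|$. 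For the tail term I would split the $y$-integral into $B_{\mathcal{R}}\setminus B_\rho$, where $(u-k)_-(y)\le 2\varsigma\xi\omega$, and $\bbR^n\setminus B_{\mathcal{R}}$, where $(u-k)_-^{p-1}(y)\le c\big((u-\mu_-)_-^{p-1}(y)+(\varsigma\xi\omega)^{p-1}\big)$; since $(u-\mu_-)_-\equiv0$ on $\mathcal{Q}$, the piece carrying $(u-\mu_-)_-^{p-1}$ is precisely what the hypothesis $(\rho/\mathcal{R})^{sp/(p-1)}\mathrm{Tail}((u-\mu_-)_-;\mathcal{Q})\le\varsigma\xi\omega$ controls, and one finds this term too is $\le c\,\varsigma\xi\omega\,|B_\rho|$. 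Dividing the lower bound for $\mathcal{I}$ by the sum of these upper bounds and using $|Q_\rho^{(\overline{\theta})}|=\overline{\theta}\rho^{sp}|B_\rho|=(\varsigma\xi\omega)^{1-p}\rho^{sp}|B_\rho|$, all powers of $\rho$ and of $\xi\omega$ cancel and one is left with the claimed bound on $Q_{\rho/2}^{(\overline{\theta})}(z_0)$; applying this with $2\rho$ in place of $\rho$ (absorbing the dilation into $c$) then yields it on $Q_\rho^{(\overline{\theta})}(z_0)$, with $c=c(n,s,p,\Lambda,\alpha_0)$.

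The main obstacle — and the reason this lemma is stated differently from Lemma \ref{lem.di4} — is precisely the singular term $\beta_\epsilon$. Near the lateral boundary one cannot assume that $u$ stays in a region where $\beta$ is flat, so the energy part $\sup_t\int_{B_\rho}\phi^p\big((u-k)_-^2-\int_k^u\beta_\epsilon'(\xi)(\xi-k)_-\,d\xi\big)\,dx$ of the left-hand side of \eqref{ineq.energy2} is contaminated by the singular integral and cannot be exploited as in the interior analysis. Keeping instead only the nonlocal positivity term $\mathcal{I}$, and balancing it against the $\beta_\epsilon$-term in the time integral through the scaling $\overline{\theta}\eqsim\omega^{1-p}$, is what makes the argument close; the price is the harmless factor $\max\{1,\varsigma\xi\omega\}$, which equals $1$ in every application of the lemma (where $\omega\le 2$ after normalization).
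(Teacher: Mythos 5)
Your proposal is correct and takes essentially the same route as the paper: feed the measure density condition \eqref{ass.density} into the nonlocal positivity term $\mathcal{I}$ on the left of \eqref{ineq.energy2} at the level $k=\mu_-+2\varsigma\xi\omega$, and absorb the $\beta_\epsilon$-contribution via $\int_{\bbR}\beta_\epsilon'=1$ (so that it is dominated once $\overline{\theta}=(\varsigma\xi\omega)^{1-p}$), exactly as the paper does. The only cosmetic deviations are that you take a time-dependent cutoff $\psi(x)\oldphi(t)$ (keeping the $\partial_t\phi^p$ term and killing the initial-slice term) whereas the paper takes $\phi=\phi(x)$ (killing $\partial_t\phi^p$ and keeping the initial-slice term, which is bounded the same way), and that you obtain the estimate first on $Q_{\rho/2}^{(\overline{\theta})}$ and then redilate, while the paper uses a cutoff $\equiv1$ on $B_\rho$ supported in $B_{3\rho/2}$ so as to land directly on $Q_\rho^{(\overline{\theta})}$ — neither difference affects the constants or the conclusion.
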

\begin{proof}
    It suffices to consider the case when $u$ is a super-solution and $z_0=0$. By the maximum principle, \eqref{ass.density} and the second condition in \eqref{ass.dib4}, we observe 
    \begin{align*}
        \left|\{-(\mu_{-}-u)(\cdot,t)\geq\xi\omega\}\cap B_\rho\right|\geq \alpha_0|B_{\rho}|
    \end{align*}
    for any $t\in I_{\rho}^{(\overline{\theta})}$. Therefore, together with $0\leq \phi=\phi(x)\in C_c^\infty(B_{3\rho/2})$, $\phi\equiv 1$ on $B_\rho$, and $\abs{\nabla\phi}\leq c/\rho$, we have
    \begin{equation}\label{ineq1.dib4}
    \begin{aligned}
        &\int_{Q_{\rho}^{(\overline{\theta})}}\int_{B_\rho}\frac{(u-k)_{+}^{p-1}(y,t)((u-k)_{-}\phi^p)(x,t)}{|x-y|^{n+sp}}\,dx\,dy\,dt\\
        &\geq \frac{(\xi\omega)^{p-1}}{c}\left(\int_{Q_{\rho}^{(\overline{\theta})}}\frac{((u-k)_{-}\phi^p)(x,t)}{|x-y|^{n+sp}}\,dz\right)|B_\rho|\\
        &\geq  \frac{\varsigma(\xi\omega)^p}{c\rho^{sp}}\left|\{-(\mu_{-}-u)\leq\varsigma\xi\omega\}\cap Q^{(\overline{\theta})}_\rho\right|,
    \end{aligned}
    \end{equation}
    where we write $k\coloneqq \mu_-+2\varsigma\xi\omega$. In addition, as in the estimate of \eqref{ineq03.di1} together with the first condition in \eqref{ass.dib4}, we deduce 
    \begin{align*}
      J\coloneqq &\rho^{-sp}\int_{Q_{2\rho}^{(\overline{\theta})}}(u-k)_-^p\,dz+\int_{Q_{2\rho}^{(\overline{\theta})}}\int_{\bbR^n\setminus B_{2\rho}}\frac{(u-k)_{-}^{p-1}(y,t)((u-k)_-\phi^p)(x,t)}{|x-y|^{n+sp}}\,dx\,dy\,dt\\
        &\quad +\int_{B_{2\rho}\times \{t=-(2\rho)^{sp}\overline{\theta}\}}\left((u-k)_{-}^2-\int_{u}^{k}\beta_\epsilon'(\xi)(\xi-k)_{-}\,d\xi\right)\phi^p\,dx \\
        &\leq c\overline{\theta}(\varsigma\xi\omega)^p|B_{\rho}|
        +c\left((\varsigma\xi\omega)^2+(\varsigma\xi\omega)\right)|B_\rho|\\
        &\leq \frac{c}{\rho^{sp}}\left[(\varsigma\xi\omega)^p+\overline{\theta}^{-1}\left((\varsigma\xi\omega)^2+(\varsigma\xi\omega)\right)\right]\left|Q^{(\overline{\theta})}_\rho\right|
    \end{align*}
    for some constant $c=c(n,s,p)$, where \eqref{ass.dib4} is used.
    We now use \eqref{defn.thetab1} to further estimate $J$ as 
    \begin{align*}
        J\leq \frac{c}{\rho^{sp}}(\varsigma\xi\omega)^p\max\{1,\varsigma\xi\omega\}\left|Q^{(\overline{\theta})}_\rho\right|.
    \end{align*}
    Using this, \eqref{ineq1.dib4} and the energy estimates given in Lemma \ref{lem.energy2} for $\phi=\phi(x)$ (so $\partial_t\phi\equiv 0$) with $r=\rho$ and $R=2\rho$, we have 
    \begin{align*}
        \frac{\varsigma(\xi\omega)^p}{\rho^{sp}}\left|\{-(\mu_{-}-u)\leq\varsigma\xi\omega\}\cap Q^{(\overline{\theta})}_\rho\right|\leq\frac{c}{\rho^{sp}}(\varsigma\xi\omega)^p\max\{1,\varsigma\xi\omega\}\left|Q^{(\overline{\theta})}_\rho\right|
    \end{align*}
    for some constant $c=c(n,s,p,\Lambda)$, which completes the proof.
\end{proof}

We next observe a De Giorgi type lemma near the boundary.

\begin{lemma}\label{lem.di1b}
    Let $u$ be a sub(super)-solution to \eqref{eq.approxb} with the measure density condition \eqref{ass.density} for $\Omega$. Let us fix $\xi\in(0,1/4]$. We take 
    \begin{equation}\label{defn.thetab}
    \begin{aligned}
        \overline{\theta}\coloneqq
            (\xi\omega)^{1-p}.
    \end{aligned}
    \end{equation}Suppose 
    \begin{align*}
        (\rho/R)^{\frac{sp}{p-1}}\mathrm{Tail}((u-\mu_{\pm})_{\pm};\mathcal{Q})\leq \xi\omega
        \quad \text{and}\quad\xi\omega\leq \pm\mu_{\pm}-\sup_{\mathcal{Q}}(\pm g)
    \end{align*}
    and $Q_\rho^{(\overline{\theta})}(z_0)\subset\mathcal{Q}$. Then there is a constant $\nu_1=\nu_1(n,s,p,\Lambda)\in(0,1)$ such that if 
    \begin{align*}
        |\{ Q_{\rho}^{(\overline{\theta})}(z_0)\,:\pm(\mu_{\pm}-u)\leq \xi\omega\}|\leq \nu_1\frac{\xi\omega}{\max\{1,\xi\omega\}^{1+n/sp}}|Q_\rho^{(\overline{\theta})}(z_0)|,
    \end{align*}
    then 
    \begin{equation*}
       \pm(\mu_{\pm}-u)\geq\xi\omega/2\quad\text{in  }Q_{\rho/2}^{(\overline{\theta})}(z_0).
    \end{equation*}
\end{lemma}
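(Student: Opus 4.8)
The plan is to run a De Giorgi iteration mirroring the proof of the interior Lemma \ref{lem.di1}, with two adjustments: the interior Caccioppoli inequality is replaced by its boundary version Lemma \ref{lem.energy2}, and the intrinsic time scaling is taken to be $\overline{\theta}=(\xi\omega)^{1-p}$ instead of $(\xi\omega)^{2-p}$. It suffices to treat the super-solution case (the sub-solution case being symmetric), and by Lemma \ref{lem.trans} we may assume $z_0=0$. Introduce the usual De Giorgi sequences
\begin{equation*}
    k_i\coloneqq\mu_-+\frac{\xi\omega}{2}+\frac{\xi\omega}{2^{i+1}},\qquad
    \rho_i\coloneqq\frac{\rho}{2}+\frac{\rho}{2^{i+1}},\qquad
    \overline{\rho}_i\coloneqq\frac{\rho_i+\rho_{i+1}}{2},
\end{equation*}
and set $A_i\coloneqq|\{Q_{\rho_i}^{(\overline{\theta})}(z_0)\,:\,u<k_i\}|/|Q_{\rho_i}^{(\overline{\theta})}(z_0)|$; all these cylinders lie inside $Q_\rho^{(\overline{\theta})}(z_0)\subset\mathcal{Q}$. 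The role of the second hypothesis is that $\xi\omega\le-\mu_--\sup_{\mathcal{Q}}(-g)=\inf_{\mathcal{Q}}g-\mu_-$ forces $k_i\le\mu_-+\xi\omega\le\inf_{\mathcal{Q}}g$, so that the admissibility condition \eqref{k.energy2} holds with $k=k_i$ at every level $i$ and Lemma \ref{lem.energy2} is available.

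Then I would derive the recursion
\begin{equation*}
    A_{i+1}\le c\,M^{\,i}\,\frac{\max\{1,\xi\omega\}^{1+sp/n}}{(\xi\omega)^{sp/n}}\,A_i^{1+sp/n}
\end{equation*}
with $c=c(n,s,p,\Lambda)\ge1$ and $M=M(n,s,p,\Lambda)\ge1$, following \eqref{ineq01.di1}--\eqref{ineq3.di1} line by line: apply the parabolic Sobolev embedding \cite[Lemma 2.3]{DinZhaZho21} on $Q_{\rho_{i+1}}^{(\overline{\theta})}(z_0)$ and bound the two resulting factors by Lemma \ref{lem.energy2} at level $k_i$, using the cut-off $\phi=\psi\eta$ with $\psi\in C_c^\infty(B_{\overline{\rho}_i})$, $\psi\equiv1$ on $B_{\rho_{i+1}}$, $|\nabla\psi|\le c2^i/\rho$, and $\eta$ a time cut-off vanishing below $-\overline{\rho}_i^{\,sp}\overline{\theta}$ and equal to $1$ above $-\rho_{i+1}^{\,sp}\overline{\theta}$. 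The nonlocal tail is absorbed via $(\rho/R)^{sp/(p-1)}\mathrm{Tail}((u-\mu_-)_-;\mathcal{Q})\le\xi\omega$ exactly as in \eqref{ineq03.di1}, and the singular part is handled by the pointwise bound $\int_{k_i}^{u}\beta_\epsilon'(\xi)(\xi-k_i)_-\,d\xi\le\xi\omega$, immediate from $\int_{\bbR}\beta_\epsilon'=1$ in \eqref{cond.betap}. The only genuine difference from the interior argument is that with $\overline{\theta}=(\xi\omega)^{1-p}$ the time contribution obeys $\overline{\theta}^{-1}\big((u-k_i)_-^2+(u-k_i)_-\big)\le2(\xi\omega)^{p}\max\{1,\xi\omega\}$ on $\{u<k_i\}$, one power of $\xi\omega$ more than in Lemma \ref{lem.di1}; this extra factor is precisely what replaces the denominator $(\xi\omega)^{1+sp/n}$ of \eqref{ineq0.di1} by $(\xi\omega)^{sp/n}$ above.

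Finally I would apply Lemma \ref{lem.tech1} with $\alpha=sp/n$ and $b=M$: taking $\nu_1=\nu_1(n,s,p,\Lambda)\in(0,1)$ equal to the threshold it provides, the hypothesis $A_0\le\nu_1\,\xi\omega/\max\{1,\xi\omega\}^{1+n/(sp)}$ yields $A_i\to0$, i.e.\ $u\ge\mu_-+\xi\omega/2$ a.e.\ in $Q_{\rho/2}^{(\overline{\theta})}(z_0)$, which is the claim $-(\mu_--u)\ge\xi\omega/2$; the sub-solution case is the mirror image, with $k_i$ decreasing from $\mu_+$ and \eqref{k.energy2} read through $\sup_{\mathcal{Q}}g$. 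I expect the only real effort to be bookkeeping: carrying the scaling $\overline{\theta}=(\xi\omega)^{1-p}$ consistently through the embedding and the energy estimate so that the powers of $\xi\omega$ and of $\max\{1,\xi\omega\}$ assemble into exactly the smallness threshold displayed in the lemma, while keeping in force the restriction \eqref{k.energy2} on admissible levels imposed by the boundary datum $g$.
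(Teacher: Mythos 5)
Your proposal reproduces the paper's proof essentially line by line: the same De Giorgi sequences and cut-offs, the boundary Caccioppoli inequality from Lemma \ref{lem.energy2} (admissible because, as you note, the second hypothesis forces $k_i\le\inf_{\mathcal{Q}}g$ at every level), the same tail absorption and $\beta_\epsilon$ bound, the identical recursion where the intrinsic scaling $\overline{\theta}=(\xi\omega)^{1-p}$ contributes the extra power of $\xi\omega$ that changes the denominator from $(\xi\omega)^{1+sp/n}$ to $(\xi\omega)^{sp/n}$, and the same closure via Lemma \ref{lem.tech1} with $\alpha=sp/n$. This is the paper's argument.
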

\begin{proof}
We may assume $z_0=0$ by Lemma \ref{lem.trans}. It suffices to consider the case when $u$ is a super-solution.
    Let us choose sequences $k_i,\rho_i$ and $\overline{\rho}_i$ as in \eqref{seq.di1}.
    We next define 
    \begin{equation*}
        A_i\coloneqq\frac{\left|\left\{Q_{\rho_i}^{(\overline{\theta})}\,:\,u<k_i\right\}\right|}{\left|Q_{\rho_i}^{(\overline{\theta})}\right|}
    \end{equation*}
    to see that 
    \begin{equation}\label{goal.di1b}
        A_{i+1}\leq cM^{i}\frac{\max\{1,\xi\omega\}^{1+sp/n}}{(\xi\omega)^{\frac{sp}n}} A_i^{1+sp/n}
    \end{equation}
    for some constant $c=c(n,s,p,\Lambda)$ and $M=M(n,s,p)$. To do this, we first recall the inequality \eqref{ineq01.di1} and we will estimate the terms $J_1$ and $J_2$ given in \eqref{ineq01.di1} with $\theta$ replaced by $\overline{\theta}$.
    Let us choose functions $\psi\in C_c^{\infty}(B_{\overline{\rho}_i})$ with $0\leq\psi\leq 1$, $\psi\equiv1$ on $B_{\rho_{i+1}}$ and $\oldphi\in C^\infty(\bbR)$ with $0\leq\oldphi\leq 1$, $\oldphi\equiv1$ on $t\geq -\rho_{i+1}^{sp}\overline{\theta}$ and $\oldphi\equiv0$ on $t<-\overline{\rho}_{i}^{sp}\overline{\theta}$.
    By using Lemma \ref{lem.energy2} with $k={k}_i$ and $\phi=\psi\oldphi$  and following the same lines as in the estimates of \eqref{ineq1.di1} and \eqref{ineq2.di1} with $\theta$ replaced by $\overline{\theta}$, we deduce
    \begin{align}\label{ineq1.di1b}
        J_1&\leq c\left[\dashint_{Q_{\rho_i}^{(\overline{\theta})}}2^{i(n+sp+p)}(u-{k}_{i})_-^p+
    \frac{2^{i}}{\overline{\theta}}\left((u-{k}_{i})_-^2+(1+\xi\omega)(u-{k}_{i})_-\right)\,dz\right]
    \end{align}
    and
    \begin{align}\label{ineq2.di1b}
        (\rho_{i+1}^{sp}\overline{\theta})^{-1}J_2^{\frac{n}{sp}}\leq c\rho_i^{-sp}\left[\dashint_{Q_{\rho_i}^{(\overline{\theta})}}2^{i(n+sp+p)}(u-{k}_{i})_-^p+\frac{2^{i}}{\overline{\theta}}\left((u-{k}_{i})_-^2+(1+\xi\omega)(u-{k}_{i})_-\right)\,dz\right]
    \end{align}
    for some constant $c=c(n,s,p,\Lambda)$.
    By the choice of the constant $\overline{\theta}$ determined in \eqref{defn.thetab}, we have 
    \begin{align*}
        \dashint_{Q_{\rho_i}^{(\overline{\theta})}}\overline{\theta}^{-1}\left((u-{k}_{i})_-^2+(1+\xi\omega)(u-{k}_{i})_-\right)\,dz\leq (\xi\omega)^{p-1}\left((\xi\omega)^{2}+(\xi\omega)\right)A_i.
    \end{align*}
    Combining this, \eqref{ineq1.di1b} and \eqref{ineq2.di1b} yields $J_1\leq c2^{i(n+sp+p)}\max\{1,\xi\omega\}(\xi\omega)^{p}A_i$ and
    \begin{align*}
        J_2\leq c\left(\overline{\theta}2^{i(n+sp+p)}\max\{1,\xi\omega\}(\xi\omega)^{p}A_i\right)^{\frac{sp}n}\leq c\left(2^{i(n+sp+p)}\max\{1,\xi\omega\}(\xi\omega)A_i\right)^{\frac{sp}n}
    \end{align*}
    for some constant $c=c(n,s,p,\Lambda)$, which implies 
    \begin{align*}
        \dashint_{Q^{(\overline{\theta})}_{\rho_{i+1}}}{(u-{k}_{i})^{p(1+2s/n)}_-}\,dz\leq c2^{i(n+sp+p)(1+sp/n)}\max\{1,\xi\omega\}^{1+sp/n}(\xi\omega)^{p(1+s/n)}A_i^{1+sp/n},
    \end{align*}
    where we have used \eqref{ineq01.di1}.
    Therefore, we have 
    \begin{align*}
        A_{i+1}&\leq \frac{c2^{ip(1+2s/n)}}{(\xi\omega)^{p(1+2s/n)}}\dashint_{Q_{\rho_{i+1}}^{(\overline{\theta})}}{(u-{k}_{i})^{p(1+2s/n)}_-}\,dz\\
        &\leq {c2^{ip(n+sp+p)(1+sp/n)}}\frac{\max\{1,\xi\omega\}^{1+sp/n}}{(\xi\omega)^{\frac{sp}n}}A_i^{1+sp/n}
    \end{align*}
    for some constant $c=c(n,s,p,\Lambda)$, which proves \eqref{goal.di1b}. Therefore, by Lemma \ref{lem.tech1}, we derive the desired result.
\end{proof}

\subsection{Uniform continuity}
In this subsection, we are going to prove Theorem \ref{thm.bdylast}. As in Subsection \ref{sec3.1}, we first construct a sequence of cylinders $Q_i$ and derive oscillation estimates on such cylinders (see \eqref{seq.cylinderb} and \eqref{ind.boundary}).

Let us fix $Q_\mathcal{R}(z_0)$ with $x_0\in \partial\Omega$ and $Q_{\mathcal{R}}(z_0)\subset\Omega'_T$. We now set
\begin{equation*}
    \omega_0\coloneqq\max\{\|u\|_{L^\infty(Q_\mathcal{R}(z_0))}+\mathrm{Tail}(u;Q_\mathcal{R}(z_0)),1\}.
\end{equation*}
Let us choose $\rho_0<\mathcal{R}$ such that
\begin{equation*}
    Q_{\rho_0}^{(\omega_0/4)^{1-p}}(z_0)\subset Q_{\mathcal{R}}(z_0).
\end{equation*}
We define 
\begin{equation}\label{defn.f1b}
    f_1(x)\coloneqq \frac{|x|^{M_1}}{N_0N_1\omega_0^{L_1}}\quad\text{and}\quad f_2(x)=1-\frac{|x|^{M_2}}{N_2\omega_0^{L_2}},
\end{equation}
where $M_1,M_2,N_1,N_2,L_1,L_2\geq4$ with $M_i\leq L_i$ are the constants determined later. In addition, we may assume $N_0\geq 1$ and this will be determined later to obtain a suitable modulus of continuity of the solution (see \eqref{choi.cg} below).
Take sequences
\begin{equation}\label{seq.omegab}
    \rho_{i+1}\coloneqq {f_1}(\omega_{i})\rho_i\quad\text{and}\quad \omega_{i+1}\coloneqq\max\left\{\omega_if_2(\omega_i),\omega_i/2^s,2\osc_{Q_{i}}g,4\epsilon\right\},
\end{equation}
where 
\begin{equation}\label{seq.cylinderb}
    {Q_i}\coloneqq Q^{(\overline{\theta_i})}_{\rho_i},\quad \overline{\theta_i}\coloneqq(\omega_i/4)^{1-p},\quad\mu_{i}^+\coloneqq \sup_{Q_{i}}u\quad\text{and}\quad \mu_{i}^-\coloneqq \sup_{Q_{i}}u-\omega_i.
\end{equation}

We are now going to determine the constants $M_1,M_2,N_1,N_2,L_1$ and $L_2$ to obtain that
\begin{equation}\label{ind.boundary}
    \osc_{Q_i}u\leq \omega_i\quad\text{and}\quad Q_{i}\subset Q_{i-1} \quad\text{for each }i\geq0,
\end{equation}
where $Q_{-1}=\mathcal{Q}_{\mathcal{R}}(z_0)$.
Suppose that 
\begin{equation*}
    \osc_{Q_i}u\leq \omega_i\quad\text{and}\quad Q_{i}\subset Q_{i-1} 
\end{equation*}
hold for any $i=0,1,\ldots,j-1$.
By following the same lines as in the proof of \eqref{ineq.tail} together with the fact that $f_1(\omega_i)\leq 1/2$ (from $M_1,N_1\geq 4$ and $\omega_i\leq\omega_0$) and $\omega_{i+1}\geq \omega_i/2^s$, by \eqref{seq.omega} we have 
\begin{equation}\label{ineq.tail2}
\mathrm{Tail}((u-\mu_{i}^{\pm})_\pm;Q_{i})\leq c_1\omega_{i}\quad\text{for any}\,\,i=0,1,\dots,j-1
\end{equation} 
for some constant $c_1=c_1(n,s,p)$, which is independent of $i$.
For a convenience of notations, we write 
\begin{equation*}
    R\coloneqq \rho_{j-1},\quad \mathcal{Q}\coloneqq Q_{j-1},\quad\omega\coloneqq \omega_{j-1},\quad \mu_+\coloneqq \mu_{j-1}^{+}\quad\text{and}\quad \mu_-\coloneqq \mu_{j-1}^{-}.
\end{equation*}
In addition, by Lemma \ref{lem.trans}, we may assume $z_0=0$. We now fix 
\begin{align}\label{choi.rhob}
    \rho\coloneqq \sigma R,
\end{align}
where $\sigma\in(0,1)$ will be determined in \eqref{choi.sigmab}.

We first consider the following two cases:
\begin{align}\label{alt.bdd}
    \mu_{+}-\omega/4\geq \sup_{\mathcal{Q}}g\quad\text{and}\quad \mu_-+\omega/4\leq \inf_{\mathcal{Q}}g.
\end{align}
It implies that 
\begin{align*}
    \omega/4\leq \pm\mu_{\pm}-\sup_{\mathcal{Q}}(\pm g).
\end{align*}
Let $\varsigma\in(0,1/4]$ be a free parameter determined later in \eqref{choi.varsigmab}. By Lemma \ref{lem.di4b} with $\xi=1/4$, we have 
    \begin{align}\label{eq:mu.u}
        \left|\left\{\pm(\mu_{+}-u)\leq \varsigma\omega/4\right\}\cap Q_{\rho}^{(\varsigma\omega/4)^{1-p}}\right|\leq {c^*\varsigma^{p-1}}\max\{1,\varsigma\omega/4\}\left|Q_{\rho}^{(\varsigma\omega/4)^{1-p}}\right|
    \end{align}
    for some constant $c^*=c^*(n,s,p,\Lambda,\alpha_0)\geq 1$ whenever 
    \begin{equation}\label{tail.ineq33}
        Q_{\rho}^{(\varsigma\omega/4)^{1-p}}\subset\mathcal{Q}\quad\text{and}\quad(\rho/R)^{\frac{sp}{p-1}}\mathrm{Tail}((u-\mu_\pm)_\pm;\mathcal{Q})\leq \varsigma\omega/4,
    \end{equation} where the constant $\alpha_0\in(0,1)$ is determined in \eqref{ass.density}. With $\nu_1$ from Lemma \ref{lem.di1b}, let us take
    \begin{equation}\label{choi.varsigmab}
        \varsigma\coloneqq\left(\frac{\nu_1\omega}{16c^*\omega_0^{2+n/sp}}\right)^{\frac1{p-2}}
    \end{equation}
    to see that 
    \begin{align*}
    \left|\left\{\pm(\mu_{+}-u)\leq \varsigma\omega/4\right\}\cap Q_{\rho}^{(\varsigma\omega/4)^{1-p}}\right|\leq \frac{\nu_1(\varsigma\omega/4)}{\omega_0^{1+\frac{n}{sp}}}\left|Q_{\rho}^{(\varsigma\omega/4)^{1-p}}\right|\leq \frac{\nu_1\varsigma\omega/4}{\max\{1,\varsigma\omega/4\}^{1+\frac{n}{sp}}}\left|Q_{\rho}^{(\varsigma\omega/4)^{1-p}}\right|,
   \end{align*}
where the constant $\nu_1=\nu_1(n.s,p,\Lambda)\in(0,1)$ is determined in Lemma \ref{lem.di1b}. Indeed, for the second inequality we have used $\omega_0\geq\max\{1,\varsigma\omega/4\}$ since $\varsigma\leq 1$ and \eqref{seq.omegab} hold, and for the first inequality
\begin{align*}
{c^*\varsigma^{p-2}}\varsigma\max\{1,\varsigma\omega/4\}=\frac{\nu_1(\varsigma\omega/4)}{\omega_0^{1+n/sp}}\dfrac{\max\{1,\varsigma\omega/4\}}{4\omega_0}\leq \frac{\nu_1(\varsigma\omega/4)}{\omega_0^{1+n/sp}}
\end{align*}
holds and so \eqref{eq:mu.u} is applicable.

We now apply Lemma \ref{lem.di1b} with $\xi=\varsigma/4\leq\frac{1}{4}$ to get that 
    \begin{align*}
        \pm(\mu_\pm-u)\geq\varsigma\omega/8\quad\text{in }Q^{(\varsigma\omega/4)^{1-p}}_{\rho/2},
    \end{align*}
    if \begin{equation}\label{tail.ineq3}
        Q_{\rho}^{(\varsigma\omega/4)^{1-p}}\subset\mathcal{Q}\quad\text{and}\quad(\rho/R)^{\frac{sp}{p-1}}\mathrm{Tail}((u-\mu_\pm)_\pm;\mathcal{Q})\leq \varsigma\omega/4.
    \end{equation}
    We now take 
    \begin{align}\label{choi.sigmab}
    \sigma=\left(\frac{\varsigma}{4c_1}\right)^{\frac{p-1}{sp}},
    \end{align}
    to make \eqref{tail.ineq33} and \eqref{tail.ineq3} holds true by \eqref{choi.rhob}, where the constants $c_1$ and $\varsigma$ are determined in \eqref{ineq.tail2} and \eqref{choi.varsigmab}, respectively. Therefore, we get
    \begin{align*}
        \osc_{Q_{\rho/2}^{(\omega/4)^{1-p}}}u\leq \osc_{Q_{\rho/2}^{(\varsigma\omega/4)^{1-p}}}u\leq \omega(1-\varsigma/8).
    \end{align*}
    By recalling \eqref{defn.f1b} and \eqref{choi.varsigmab}, there are constants $M_2\coloneqq 4+\frac{1}{p-2}$, $L_2\coloneqq \left(2+\frac{n}{sp}\right)\left(4+\frac{1}{p-2}\right)> M_2$ and $N_2=N_2(n,s,p,\Lambda,\alpha_0)\geq \max\{2^s/(2^s-1)+M_2,(16c^*/\nu_1)^{\frac1{p-2}}\}$ such that \eqref{cond.f2} and
    \begin{align}\label{choi.n22}
        \osc_{Q_{\rho/2}^{(\varsigma\omega/4)^{1-p}}}u\leq \omega \left(1-\frac{\omega^{M_2}}{N_2\omega_0^{L_2}}\right)=\omega f_2(\omega).
    \end{align}
    In addition, by \eqref{choi.rhob}, \eqref{choi.varsigmab} and \eqref{choi.sigmab}, there are constants $M_1\coloneqq 4+M_2(p-1)/(sp)$, $L_1\coloneqq 4+L_2(p-1)/(sp)$ and $C_1=C_1(n,s,p,\Lambda,\alpha_0)\geq 1$ such that $\rho/2=\frac{\omega^{\frac{1}{p-2}\frac{p-1}{sp}}}{C_1 \omega_0^{(2+\frac{n}{sp})\frac1{p-2}\frac{p-1}{sp}}}R$ and $\frac{\omega^{M_1}}{C_1\omega_0^{L_1}}\leq\frac{1}{2}$.

We now choose
\begin{align}\label{choi.n1}
    f_1(x)=\frac{|x|^{M_1}}{N_0N_1\omega_0^{L_1}},
\end{align}
where $N_1\coloneqq (4^{\frac{2(p-1)}{s}}+1)C_1$. We point out that a constant $N_0\geq1$ will be chosen later depending only on a given data (see \eqref{choi.cg}). Then similar to the argument for \eqref{eq:rho.j}, we get
\begin{align*}
\osc_{Q_{f_1(\omega_{j-1})\rho_{j-1}}^{(\omega_j/4)^{1-p}}}u\leq\osc_{Q_{\rho/2}^{(\omega/4)^{1-p}}}u\leq f_2(\omega_{j-1})\omega_{j-1}\leq \omega_j\quad\quad\text{and}\quad\quad Q_{j}\subset Q_{j-1}.
\end{align*}
On the other hand, if the two cases in \eqref{alt.bdd} do not occur, then $\osc_{Q_{j}}u\leq \osc_{Q_{j-1}}u\leq 2\osc_{Q_j}g\leq \omega_{j}$ holds. Therefore, we have proved \eqref{ind.boundary}.

Using this, we are now able to derive a quantitative oscillation estimate \eqref{ineq.bdylast}. 
\begin{proof}[Proof of Theorem \ref{thm.bdylast}.]
By \eqref{ass.bdylast}, we take $\omega_0=1$ and take $\rho_0=\mathcal{R}/4^{\frac{p-1}{sp}}$ to see that $Q_{\rho_0}^{(\omega_0/4)^{1-p}}(z_0)\subset {Q}_{\mathcal{R}}(z_0)$.
By Lemma \ref{lem.iter}, there is a constant $\varsigma=\varsigma(n,s,p,\Lambda,\alpha_0)\in(0,1)$ such that the sequence $a_i\coloneqq (1+i)^{-\varsigma}$ satisfies 
\begin{equation}\label{ineq0000.unib}
    a_n\geq a_{n-1}f_2(a_{n-1}),
\end{equation} as the constants $M_2,N_2$ and $L_2$ depend only on $n,s,p,\Lambda$ and $\alpha_0$.

We now assume 
\begin{align}\label{omegag}
    \omega_g(\rho)\leq c_g\left(1+\ln(\mathcal{R}/\rho)\right)^{-\delta}
\end{align}
holds for any $\rho\in(0,\mathcal{R}]$, where $\delta\in(\varsigma,1)$ and $c_g\geq1$.
We are now going to find a parameter $N_0$ which is first introduced in \eqref{defn.f1b}, in order to derive 
\begin{align*}
    \osc_{Q_i}u\leq \left(1+i\right)^{-\varsigma}+4\epsilon
\end{align*}
for any $i\geq0$.

In light of \eqref{defn.f1b} and the fact that $\omega_{i}\leq\omega_0=1$, we get
\begin{align}\label{ineq000.unib}
    \rho_{i}\leq \frac{\rho_{i-1}}{N_0N_1}\leq \cdots\leq \frac{\rho_0}{(N_0N_1)^{i+1}}.
\end{align}
Using \eqref{cond.oscg}, $\omega_{i+1}\geq \omega_i/2^s$, \eqref{ineq000.unib}, the fact that $N_1\geq 2^{\frac{2(p-1)}{s}}$ (which is determined in \eqref{choi.n1}), and \eqref{omegag}, we have 
\begin{equation*}
\begin{aligned}
    \osc_{Q_i}g\leq \omega_g\left(\rho_i\left[1+(\omega_i/4)^{\frac{1-p}{sp}}\right]\right)&\leq \omega_g\left(\rho_i2^{i(p-1)/p}\left[1+(\omega_0/4)^{\frac{1-p}{sp}}\right]\right)\\
    &\leq \omega_g\left(\rho_0/N_0^{i+1}\right)\leq c_g\left(1+(i+1)\ln N_0\right)^{-\delta},
\end{aligned}
\end{equation*}
where we also have used the fact that $\omega_0=1$ and $\rho_0=\mathcal{R}/4^{\frac{p-1}{sp}}$.
We now choose $N_0=N_0(\delta,c_g)\geq1$ sufficiently large to see that 
\begin{equation}\label{choi.cg}
    \osc_{Q_i}g\leq (2+i)^{-\delta}/2,
\end{equation}
which implies $\osc_{Q_i}g\leq (2+i)^{-\varsigma}/2$, by the choice of $\delta>\varsigma$.

Therefore, by considering the proof of \eqref{ineq3.uni}, we similarly derive
\begin{align}\label{ineq00.unib}
    \osc_{Q_{i}}u\leq  (1+i)^{-\varsigma}+4\epsilon
\end{align}
for any $i\geq0$. First, by following the same lines as in the proof of \eqref{ineq3.uni}, there is the smallest positive integer $i_0$ such that $\omega_{i_0+1}=4\epsilon$ and $\omega_{i_0}>4\epsilon$. Then we inductively prove \begin{align}\label{ineq001.unib}
\omega_i\leq  (1+i)^{-\varsigma}
\end{align} for any $i\in[0,i_0]$. 
Suppose \eqref{ineq001.unib} holds for any $i=0,1,\ldots, j-1$. Then we have $\omega_{j-1}\leq j^{-\varsigma}$ and
\begin{equation*}
    \omega_{j}\leq\max\{\omega_{j-1}f_2(\omega_{j-1}),(1+j)^{-\varsigma}\}.
\end{equation*}
We next note that $xf_2(x)$ is non-decreasing function when $|x|\leq1$ by the fact that $N_2\geq M_2$ which is determined in \eqref{choi.n22}. Therefore, using this and \eqref{ineq0000.unib}, we have 
\begin{align*}
    (1+j)^{-\varsigma}\geq j^{-\varsigma}f_2(j^{-\varsigma})\geq \omega_{j-1}f_2(\omega_{j-1}),
\end{align*}
which implies \eqref{ineq001.unib} with $i=j$. Thus, we have proved \eqref{ineq00.unib}.

Using \eqref{ineq00.unib}, we are now ready to prove \eqref{ineq.bdylast}. We may assume $r\in(0,\rho_0]$, since if $r\in(\rho_0,\mathcal{R}]$, then \eqref{ineq.bdylast} directly follows by taking the constant $c=c(n,s,p,\delta)$ sufficiently large.
We then note that there is a nonnegative integer $i$ such that 
\begin{align*}
    \rho_{i+1}<r\leq\rho_i.
\end{align*}
From \eqref{seq.omegab} and the fact that $\omega_0=1$, we observe $\omega_{i}\geq \omega_0/2^{si}=1/2^{si}$ and
\begin{equation*}
   r> \rho_{i+1}\geq\frac{\omega_i^{M_1}}{N_0N_1}\rho_i\geq \frac{1}{2^{M_1si}}\frac{1}{N_0N_1}\rho_i\geq\cdots\geq \left(\frac{1}{N_0N_1}\right)^{i+1}\left[\prod_{k=0}^{i}\left(\frac{1}{2^{sk}}\right)^{M_1}\right]\rho_0,
\end{equation*}
where $N_1=N_1(n,s,p,\Lambda,\alpha_0)$, $M_1=M_1(n,s,p,\Lambda)$ are defined preceding \eqref{choi.n1}, and the constant $N_0=N_0(\delta,c_g)$ is determined in \eqref{choi.cg}.
Thus, we deduce
\begin{equation*}
    \frac{\rho_0}{\left(2N_0N_1\right)^{{2M_1i^2}}}\leq \frac{\rho_0}{\left(N_0N_1\right)^{i+1}2^{\frac{M_1i(i+1)}{2}}}\leq r,
\end{equation*}
which implies $i^2\geq c\,{\ln(\rho_0/r)}$, where $c=c(n,s,p,\Lambda,\alpha_0,\delta,c_g)$. Plugging this into \eqref{ineq00.unib} together with the fact that $\rho_0=\mathcal{R}/4^{\frac{p-1}{sp}}$ yields 
\begin{align*}
    \osc_{Q_r^{(\omega_0/4)^{2-p}}}u\leq \osc_{Q_i}u\leq (1+i)^{-\varsigma}+4\epsilon\leq c\left(1+\ln(\mathcal{R}/r)\right)^{-\varsigma/2}+4\epsilon.
\end{align*}
Since we assume $\omega_0=1$, after a few computations, we derive \eqref{ineq.bdylast}. This completes the proof.
\end{proof}

\section{Estimates on the Initial Boundary}\label{sec:5}
In this section, we are going to study a weak solution $u\equiv u_\epsilon$ to 
    \begin{equation}\label{eq.approxi}
\left\{
\begin{alignedat}{3}
\partial_t (u+\beta_\epsilon(u))+\mathcal{L}u&= 0&&\qquad \mbox{in  $\mathcal{Q}\cap \Omega_T$}, \\
u&=g&&\qquad  \mbox{in $\mathcal{Q}\setminus \Omega_T$,}
\end{alignedat} \right.
\end{equation}
where $\Omega'\Supset\Omega$ and $\mathcal{Q}\coloneqq B_R(x_0)\times [0,T_0]$ with $x_0\in \overline{\Omega}$, and $B_R(x_0)\times (0,T_0]\subset\Omega'_T$. Let
\begin{align*}
    \mu_+\geq \sup_{\mathcal{Q}}u,\,\quad\mu_{-}\leq \inf_{\mathcal{Q}}u,\quad\text{and}\quad\omega\geq\mu_+-\mu_-.
\end{align*}
We assume $g\in C\left(\overline{\mathcal{Q}}\right)$ and there is a non-decreasing function $\omega_g:\bbR_+\to\bbR+$ with $\omega_g(0)=0$ and
\begin{equation*}
    |g(z_1)-g(z_2)|\leq \omega_g\left(|x_1-x_2|+|t_1-t_2|^{\frac1{sp}}\right)\quad\text{for any }z_1,z_2\in\mathcal{Q}.
\end{equation*}

Let us fix $(0,\mathcal{T})\subset (0,T_0)$. Then we have the following energy estimates near the initial level.
\begin{lemma}\label{lem.energy3}
   Let $u$ be a sub(super)-solution to \eqref{eq.approxi}. Let $\phi=\phi(x)$ be a cutoff function satisfying $\phi\equiv0$ on $\bbR^n\setminus B_r(x_0)$ with $r<R$. Then there is a constant $c=c(n,s,p,\Lambda)$ such that
    \begin{align*}
        &\sup_{0<t<\mathcal{T}}\int_{B_R(x_0)}\phi^p(u-k)_{\pm}^{2}\,dx\\
        &\quad+ \int_{0}^{\mathcal{T}}\int_{B_R(x_0)}\int_{B_R(x_0)}\frac{|((u-k)_{\pm}\phi)(x,t)-((u-k)_{\pm}\phi)(y,t)|^p}{|x-y|^{n+sp}}\,dx\,dy\,dt\\
        &\quad+ \int_{0}^{\mathcal{T}}\int_{B_R(x_0)}\int_{B_R(x_0)}\frac{(u-k)_{\mp}^{p-1}(y,t)((u-k)_{\pm}\phi^p)(x,t)}{|x-y|^{n+sp}}\,dx\,dy\,dt\\
        &\leq cR^{p-sp}\int_{Q_{R,\mathcal{T}}(x_0,\mathcal{T})}(u-k)_{\pm}^p|\nabla\phi|^p\,dz\\
        &\quad+c\int_{0}^{\mathcal{T}}\int_{\bbR^n\setminus B_R(x_0)}\int_{B_R(x_0)}\frac{(u-k)_{\pm}^{p-1}(y,t)((u-k)_{\pm}\phi^p)(x,t)}{|x-y|^{n+sp}}\,dx\,dy\,dt
    \end{align*}
    whenever
\begin{equation*}
\begin{aligned}
    \begin{cases}
        k\geq \sup\limits_{\mathcal{Q}}g&\quad\mbox{if $u$ is a sub-solution,}\\
        k\leq \inf\limits_{\mathcal{Q}}g&\quad\mbox{if $u$ is a super-solution.}
    \end{cases}
    \end{aligned}
\end{equation*}
\end{lemma}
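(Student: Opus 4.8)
The plan is to prove Lemma~\ref{lem.energy3} by testing the weak formulation of \eqref{eq.approxi} with $(u-k)_{\pm}\phi^p$, exactly as in the proofs of Lemma~\ref{lem.energy} and Lemma~\ref{lem.energy2}, but exploiting the initial-level structure to drop two families of terms. First I would note that the sign restriction on $k$ in the hypothesis is precisely what guarantees $(u-k)_{\pm}\phi^p$ is an admissible test function: since $u=g$ on $\mathcal{Q}\setminus\Omega_T$ and, say for a sub-solution, $k\geq\sup_{\mathcal{Q}}g$, we get $(u-k)_+\equiv0$ on $\mathcal{Q}\setminus\Omega_T$, so the truncated test function has the required support in $\Omega\cap B_R(x_0)$ for each time slice, and lies in the correct parabolic space after the usual Steklov-averaging. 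This is the same observation used in Lemma~\ref{lem.energy2}.

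Next I would treat the time term $\partial_t(u+\beta_\epsilon(u))$ using \cite[Lemma~2.1]{BarKuuUrb14} to obtain a pointwise-in-time lower bound of the form $\frac{d}{dt}\bigl[\int_{B_R}\phi^p(u-k)_\pm^2\,dx \pm \int_{B_R}\phi^p\int_k^u\beta_\epsilon'(\xi)(\xi-k)_\pm\,d\xi\,dx\bigr]$ up to an error controlled by $\int(u-k)_\pm^2\partial_t\phi^p$; but here $\phi=\phi(x)$ is time-independent, so $\partial_t\phi^p\equiv0$ and that error vanishes entirely. Integrating from $0$ to any $\tau\in(0,\mathcal{T})$, the key point is the evaluation at the initial time $t=0$: the term $\int_{B_R}\phi^p\bigl((u-k)_\pm^2(\cdot,0) \pm \int_k^u\beta_\epsilon'(\xi)(\xi-k)_\pm\,d\xi\bigr)\,dx$ sits on the \emph{correct} side. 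Since $u(\cdot,0)=g(\cdot,0)$ on $\mathcal{Q}\setminus\Omega_T$ and $k$ is chosen on the appropriate side of $\sup_{\mathcal Q}g$ (resp.\ $\inf_{\mathcal Q}g$), one has $(u-k)_\pm(\cdot,0)\equiv0$ on $B_R(x_0)\cap(\Omega_T)^c$, and — this is the genuinely initial-boundary feature — on $B_R(x_0)\cap\{t=0\}$ the datum forces $(u-k)_\pm(\cdot,0)=0$ as well wherever $\phi\neq0$; hence the initial term vanishes and need not appear on the right-hand side at all. This is exactly why the statement has no $\int_{B_R\times\{t=0\}}(\cdots)\phi^p\,dx$ term, in contrast to Lemmas~\ref{lem.energy} and~\ref{lem.energy2}.

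For the nonlocal term $\mathcal{L}u$ I would invoke \cite[Proposition~2.1]{Lia24} (as in Lemma~\ref{lem.energy}) to split the double integral against $(u-k)_\pm(x,t)\phi^p(x,t)-(u-k)_\pm(y,t)\phi^p(y,t)$ into: (i) a good Gagliardo-type term $\int\int_{B_R\times B_R}\frac{|((u-k)_\pm\phi)(x)-((u-k)_\pm\phi)(y)|^p}{|x-y|^{n+sp}}$ kept on the left; (ii) the cross term $\int\int_{B_R\times B_R}\frac{(u-k)_\mp^{p-1}(y)((u-k)_\pm\phi^p)(x)}{|x-y|^{n+sp}}$ also kept on the left; (iii) a local error $\lesssim R^{p-sp}\int(u-k)_\pm^p|\nabla\phi|^p$; and (iv) the long-range tail contribution $\int_0^{\mathcal T}\int_{\mathbb{R}^n\setminus B_R}\int_{B_R}\frac{(u-k)_\pm^{p-1}(y)((u-k)_\pm\phi^p)(x)}{|x-y|^{n+sp}}$ on the right. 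Collecting (i)--(iv) together with the time term gives exactly the claimed inequality; as in Lemma~\ref{lem.energy} the whole computation is then a matter of assembling these standard ingredients, so I would simply write ``following the lines of the proof of Lemma~\ref{lem.energy}, using \cite[Lemma~2.1]{BarKuuUrb14} for the time term and \cite[Proposition~2.1]{Lia24} for the nonlocal term, and observing that $\partial_t\phi^p\equiv0$ and that the admissibility condition on $k$ forces the initial term to vanish, we obtain the stated estimate.''

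The main obstacle — such as it is — is the justification that the initial term really drops out, i.e.\ that one may pass from the Steklov-averaged identity to the pointwise-in-time inequality without picking up a boundary contribution at $t=0$; this requires knowing $u(\cdot,0)=g(\cdot,0)$ in the appropriate trace sense on $B_R(x_0)$ (which is part of the meaning of ``$u=g$ in $\mathcal Q\setminus\Omega_T$'' together with the continuity $u\in C([0,\mathcal R^{sp}];L^2)$ built into the definition of weak solution near the initial boundary) and then that the sign condition \eqref{k.energy2}-type restriction on $k$ makes $(u-k)_\pm(\cdot,0)\phi^p=0$ a.e. Everything else is a verbatim repetition of the interior and lateral energy estimates.
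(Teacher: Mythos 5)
Your proposal is correct and follows exactly the paper's argument: test with $(u-k)_\pm\phi^p$, repeat the Caccioppoli computation of Lemma~\ref{lem.energy} (resp.\ Lemma~\ref{lem.energy2}), observe that $\partial_t\phi^p\equiv 0$ since $\phi=\phi(x)$, and use the sign condition on $k$ together with $u(\cdot,0)=g(\cdot,0)$ to kill the initial term. The only point you leave implicit — that the remaining $\pm\int_{B_R}\phi^p\int_k^u\beta_\epsilon'(\xi)(\xi-k)_\pm\,d\xi\,dx$ on the left is non\-negative and hence can simply be dropped to arrive at the stated left-hand side — is exactly what the paper also records; it is trivial but worth one explicit clause.
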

\begin{proof}
    Since we choose a cutoff function $\phi$ which depends only on the spatial direction, $\partial_t\phi=0$. In addition, by the choice of $k$, we have 
    \begin{equation*}
        (u-k)_{\pm}(\cdot,0)=0\quad\text{and}\quad \pm\int_{B_R(x_0)}\phi^p\int^u_{k}\beta'_{\epsilon}(\xi)(\xi-k)_{\pm}\,d\xi\,dx\geq 0.
    \end{equation*}
    Therefore, we have the desired estimate.
\end{proof}
We next provide a variant of De Giorgi type argument.
\begin{lemma}\label{lem.di2i}
    Let $u$ be a sub(super)-solution to \eqref{eq.int.appro} and let $\xi\in(0,1/4]$ and $\Theta=(\xi\omega)^{2-p}$. Then there is a constant $\nu_d=\nu_d(n,s,p,\Lambda)$ such that if 
\begin{equation*}
    \pm(\mu_{\pm}-u(\cdot,0))\geq \xi\omega\quad\text{in }B_\rho(x_0),
\end{equation*}
then
\begin{equation*}
    \pm(\mu_{\pm}-u)\geq \xi\omega/2\quad\text{in }B_{\rho/2}(x_0)\times(0,\nu_d\Theta\rho^{sp}],
\end{equation*}
whenever 
\begin{align*}
       (\rho/R)^{\frac{sp}{p-1}} \mathrm{Tail}((u-\mu_{\pm})_{\pm};\mathcal{Q})\leq \xi\omega\quad\text{and}\quad\xi\omega\leq \pm\mu_{\pm}-\sup_{\mathcal{Q}}(\pm g)
    \end{align*}
    and $B_\rho(x_0)\times (0,\nu_d\Theta\rho^{sp}]\subset \mathcal{Q}$.
\end{lemma}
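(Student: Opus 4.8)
The plan is to run a De Giorgi iteration on cylinders of fixed time length $B_{\rho_i}(x_0)\times(0,\mathcal{T}]$, where $\mathcal{T}:=\nu_d\Theta\rho^{sp}$, in complete analogy with Lemma~\ref{lem.di2}, the only change being that Lemma~\ref{lem.energy3} is used in place of Lemma~\ref{lem.energy}. First I would reduce to the case in which $u$ is a supersolution and, by Lemma~\ref{lem.trans}, assume $x_0=0$. The hypothesis $\xi\omega\le-\mu_-+\inf_{\mathcal{Q}}g$ is needed exactly so that every level $k\le\mu_-+\xi\omega$ satisfies $k\le\inf_{\mathcal{Q}}g$ and is therefore admissible in Lemma~\ref{lem.energy3}; moreover $\{t=0\}\subset\mathcal{Q}\setminus\Omega_T$ gives $u(\cdot,0)=g(\cdot,0)\ge\inf_{\mathcal{Q}}g\ge k$, so the initial contribution is absent. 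Taking in addition a cutoff $\phi=\psi(x)$ depending only on the spatial variable removes the $\partial_t\phi^p$ term. In particular the singular graph $\beta_\epsilon$ enters nowhere in the resulting energy inequality, and this is precisely what spares us the $\max\{1,\xi\omega\}$-type loss present in the interior lemmas.

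Concretely, I would take the sequences $k_i:=\mu_-+\tfrac{\xi\omega}{2}+\tfrac{\xi\omega}{2^{i+1}}$, $\rho_i:=\tfrac{\rho}{2}+\tfrac{\rho}{2^{i+1}}$, $\overline{\rho}_i:=\tfrac{\rho_i+\rho_{i+1}}{2}$ as in \eqref{seq.di1}, fix cutoffs $\psi_i\in C_c^\infty(B_{\overline{\rho}_i})$ with $\psi_i\equiv1$ on $B_{\rho_{i+1}}$ and $|\nabla\psi_i|\le c2^i/\rho$, and set $A_i:=|\{B_{\rho_i}\times(0,\mathcal{T}]:u<k_i\}|/|B_{\rho_i}\times(0,\mathcal{T}]|$. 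Applying the parabolic Sobolev inequality \cite[Lemma 2.3]{DinZhaZho21} as in \eqref{ineq01.di1}, inserting Lemma~\ref{lem.energy3} with $k=k_i$, $\phi=\psi_i$, and handling the nonlocal tail term as in \eqref{ineq03.di1} by means of $|y-x|\ge|y|/2^{i+1}$ for $x\in B_{\overline{\rho}_i}$, $y\in\bbR^n\setminus B_{\rho_i}$ together with the standing bound $(\rho/R)^{sp/(p-1)}\mathrm{Tail}((u-\mu_-)_-;\mathcal{Q})\le\xi\omega$, one is led to a recursion of the form
\[
A_{i+1}\le c\,M^{i}\,\nu_d^{sp/n}\,A_i^{1+sp/n},\qquad c=c(n,s,p,\Lambda),\quad M=M(n,s,p).
\]
Here $(u-k_i)_-\le\xi\omega$ and the intrinsic identity $\Theta=(\xi\omega)^{2-p}$ are used; the length $\mathcal{T}=\nu_d\Theta\rho^{sp}$ enters only through the quantity $\big(\sup_t\dashint(u-k_i)_-^2\big)^{sp/n}$, hence contributes the coefficient $\nu_d^{sp/n}$, while all powers of $\xi\omega$ cancel because $p(1+2s/n)=p+2sp/n$.

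It then remains to close the iteration via Lemma~\ref{lem.tech1}: rescaling the sequence to $\widetilde A_i:=c^{n/(sp)}\nu_d A_i$ converts the recursion into $\widetilde A_{i+1}\le M^{i}\widetilde A_i^{1+sp/n}$, and since $A_0\le1$ we have $\widetilde A_0\le c^{n/(sp)}\nu_d$, so choosing $\nu_d=\nu_d(n,s,p,\Lambda)\in(0,2^{-(p-2)}]$ small enough that $c^{n/(sp)}\nu_d\le M^{-(n/sp)^2}$ yields $A_i\to0$, that is, $u\ge\mu_-+\xi\omega/2$ a.e.\ in $B_{\rho/2}(x_0)\times(0,\mathcal{T}]$; the subsolution case is symmetric. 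I expect the only point demanding genuine care to be checking that all powers of $\xi\omega$ in the recursion truly cancel, which relies on the intrinsic choice $\Theta=(\xi\omega)^{2-p}$ and the exponent relation $p(1+2s/n)=p+2sp/n$ so that $\nu_d$ may be fixed independently of $\xi$, $\omega$ and $\epsilon$; everything else is a direct transcription of the computations behind Lemma~\ref{lem.di1} and Lemma~\ref{lem.di2}.
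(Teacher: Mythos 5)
Your proof is correct and follows essentially the same route as the paper's: the paper's one-line proof says to rerun Lemma~\ref{lem.di2} with $t_1=0$ using Lemma~\ref{lem.energy3}, and that is precisely the De Giorgi iteration you carry out in full. You correctly identify that the role of $\xi\omega\le\pm\mu_\pm-\sup_{\mathcal{Q}}(\pm g)$ is to make the levels $k_i$ admissible in Lemma~\ref{lem.energy3}, that a purely spatial cutoff removes both the initial term and the $\partial_t\phi^p$ term (and with it the $\beta_\epsilon$ contribution and the $\max\{1,\xi\omega\}$-type loss from Lemma~\ref{lem.di1}), and that the fixed time length $\mathcal{T}=\nu_d\Theta\rho^{sp}$ produces the crucial $\nu_d^{sp/n}$ in the recursion; the power bookkeeping $(\xi\omega)^{p+2sp/n}=(\xi\omega)^{p(1+2s/n)}$ and the rescaling $\widetilde A_i=c^{n/(sp)}\nu_d A_i$ to invoke Lemma~\ref{lem.tech1} with $A_0\le1$ are both exactly right.
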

\begin{proof}
    By Lemma \ref{lem.di2} with $t_1$ replaced by 0 and Lemma \ref{lem.energy3}, we deduce the desired result.
\end{proof}

\subsection{Uniform continuity}
In this subsection, we will prove Theorem \ref{thm.intilast}. 
Before that, we first construct a sequence of cylinders $Q_i$ and derive oscillation estimates on such cylinders (see \eqref{seq.cylinderi} and \eqref{iter.ep3}). 
Let us fix $B_{\mathcal{R}}(x_0)\times (0,\mathcal{R}^{sp}]\subset \Omega'_T$ with $x_0\in \overline{\Omega}$. We next set 
\begin{equation*}
    \omega_0\coloneqq\max\{\|u\|_{L^\infty(B_{\mathcal{R}}(x_0)\times (0,\mathcal{R}^{sp}])}+\mathrm{Tail}(u;B_\mathcal{R}(x_0)\times (0,\mathcal{R}^{sp}]),1\}.
\end{equation*}
Let us choose $\rho_0<\mathcal{R}$ such that
\begin{equation*}
    B_{\rho_0}(x_0)\times(0,\rho_0^{sp}(\omega_0/4)^{2-p}]\subset B_\mathcal{R}(x_0)\times (0,\mathcal{R}^{sp}].
\end{equation*}
Take sequences
\begin{equation}\label{seq.rhob}
    \rho_{i+1}\coloneqq \rho_i/N\quad\text{and}\quad \omega_{i+1}\coloneqq\max\left\{m\omega_i,2\osc_{Q_i}g\right\},
\end{equation}
where $N\geq4$, $m=\max\{7/8,2^{-s}\}$ and
\begin{equation}\label{seq.cylinderi}
    {Q_i}\coloneqq B_{\rho_i}(x_0)\times (0,\rho_i^{sp}\theta_i],\quad {\theta_i}\coloneqq(\omega_i/4)^{2-p},\quad\mu_{i}^+\coloneqq \sup_{Q_{i}}u\quad\text{and}\quad \mu_{i}^-\coloneqq \sup_{Q_{i}}u-\omega_i.
\end{equation}
We are now going to determine a constant $N$ to see that
\begin{equation}\label{iter.ep3}
    \osc_{Q_i}u\leq \omega_i\quad\text{and}\quad Q_{i}\subset Q_{i-1} \quad\text{for each }i\geq0,
\end{equation}
where $Q_{-1}=B_{\mathcal{R}}(x_0)\times (0,\mathcal{R}^{sp}]$.
Suppose that 
\begin{equation*}
    \osc_{Q_i}u\leq \omega_i\quad\text{and}\quad Q_{i}\subset Q_{i-1} 
\end{equation*}
hold for any $i=0,1,\ldots j-1$.
As in the proof of \eqref{ineq.tail}, there is a constant $c_2=c_2(n,s,p)\geq 1$ such that
\begin{equation}\label{ineq.tail2i}
    \mathrm{Tail}((u-\mu_{j-1}^{\pm})_\pm;Q_{i-1})\leq c_2\omega_{j-1}.
\end{equation}

For a convenience of notations, we write 
\begin{equation*}
    \rho\coloneqq \sigma R,\quad R\coloneqq \rho_{j-1},\quad \mathcal{Q}\coloneqq Q_{j-1},\quad\omega\coloneqq \omega_{j-1},\quad \mu_+\coloneqq \mu_{j-1}^{+}\quad\text{and}\quad \mu_-\coloneqq \mu_{j-1}^{-},
\end{equation*}
where $\sigma\in(0,1/4]$.
In addition, by Lemma \ref{lem.trans}, we may assume $z_0=0$. 
Let us consider the two cases:
\begin{align}\label{alt.i}
    \mu_{+}-\omega/4\geq \sup_{\mathcal{Q}}g\quad\text{and}\quad \mu_-+\omega/4\leq \inf_{\mathcal{Q}}g.
\end{align}
This is equivalent to
\begin{align*}
    \omega/4\leq \pm\mu_{\pm}-\sup_{\mathcal{Q}}(\pm g).
\end{align*}
We now take 
\begin{align}\label{choi.sigmai}
    \sigma\coloneqq 1/(16c_2)^{\frac{p-1}{sp}}(\leq 1/2),
\end{align}
where the constant $c_2$ is determined in \eqref{ineq.tail2i}.
Since 
\begin{align*}
    \pm(\mu_{\pm}-u(\cdot,0))\geq\pm\mu_{\pm}-\sup_{\mathcal{Q}}(\pm g)\geq \omega/4\quad\text{in }B_\rho
\end{align*}
and
\begin{align*}
    (\rho/R)^{\frac{sp}{p-1}}\mathrm{Tail}((u-\mu_{\pm})_\pm;\mathcal{Q})\leq \omega/4,
\end{align*}
which follows from \eqref{ineq.tail2i} and \eqref{choi.sigmai},
we are now able to use Lemma \ref{lem.di2i} to get that 
\begin{align*}
\pm(\mu_{\pm}-u)\geq \omega/8\quad\text{in } B_{\rho/2}\times(0,\nu_d(\omega/4)^{2-p}\rho^{sp}]
\end{align*}
for some constant $\nu_d=\nu_d(n,s,p,\Lambda)\in(0,1)$.

Therefore, by taking
\begin{align}\label{eq:N}
N\coloneqq\frac{2^{\frac{p-2}{sp}+2}}{\sigma\nu_d^{\frac1{sp}}}(\geq 4),
\end{align}
where the constant $\sigma$ is determined in \eqref{choi.sigmai} and $\nu_d$ is from Lemma \ref{lem.di2i}, we have 
\begin{align}\label{nchoi}
\osc_{Q_{j}}u\leq\osc_{B_{\rho/2}\times (0,\nu_d(\omega/4)^{2-p}\rho^{sp}]}u\leq 7\omega/8\underset{\eqref{seq.rhob}}{\leq} \omega_j.
\end{align}
Indeed, note that $Q_j=B_{\rho_j}\times(0,\rho_j^{sp}\theta_j]$, $\rho_j=\rho_{j-1}/N$ and $\rho=\sigma\rho_{j-1}$. By \eqref{eq:N}, we have 
\begin{align*}
    \rho_{j}=\rho_{j-1}/N\leq \sigma\rho_{j-1}/2\leq \rho/2.
\end{align*}
Next, we note from \eqref{seq.rhob} and $m\geq1/2$ that
\begin{align*}
    \rho_j^{sp}\theta_j=(\rho_{j-1}/N)^{sp}(\omega_{j}/4)^{2-p}\leq (\rho_{j-1}/N)^{sp}(m\omega_{j-1}/4)^{2-p}\leq \nu_d(\omega/4)^{2-p}\rho^{sp}.
\end{align*}
Also, we have 
\begin{align*}
Q_{j}\subset Q_{j-1}.
\end{align*}
Indeed, one can see that $\rho_{j}^{sp}(\omega_j/4)^{2-p}\leq (\rho_{j-1}/N)^{sp} m^{2-p}(\omega_{j-1}/4)^{2-p}\leq \rho_{j-1}^{sp}(\omega_{j-1}/4)^{2-p}$, which gives the above inclusion. If two cases given in \eqref{alt.i} do not occur, then we have 
\begin{align*}
    \osc_{Q_j}u\leq \osc_{Q_{j-1}}u\leq 2\osc_{Q_{j-1}}g\leq \omega_j.
\end{align*}
This proves \eqref{iter.ep3}.

We are ready to prove Theorem \ref{thm.intilast}.
\begin{proof}[Proof of Theorem \ref{thm.intilast}.]
Note that $\omega_0=1$ and that 
\begin{align*}
    \omega_{g}(\rho)\leq c_g(1+\ln(\mathcal{R}/\rho))^{-\delta}
\end{align*}
for some constants $\delta\in(0,1)$ and $c_g\geq1$.
With an inductive process, we are going to derive 
\begin{align}\label{ind.b}
    \omega_i\leq c(1+\ln(\mathcal{R}/\rho_i))^{-\delta},
\end{align}where $\rho_0\coloneqq\mathcal{R}/4^{\frac{p-1}{sp}}$ and $c=c(n,s,p,\Lambda,\delta,c_g)\geq2$. Indeed, by the choice of $\rho_0$, we have $B_{\rho_0}(x_0)\times(0,\rho_0^{sp}(\omega_0/4)^{2-p}]\subset B_{\mathcal{R}}(x_0)\times (0,\mathcal{R}^{sp}]$. We find a positive integer $i_g=i_g(n,s,p,\Lambda,\delta)$ such that for any $i\geq i_g$,
\begin{align}\label{choi.ig}
    \left[1+(i+1)\ln\left(4^{\frac{p-1}{sp}}N\right)\right]/\left[1+i\ln\left(4^{\frac{p-1}{sp}}N\right)\right]\leq 1/m^{\frac1\delta},
\end{align}
as the left-hand side is decreasing with respect to the positive integer $i$ and goes to 1 as $i\to\infty$, where the constants $m$ and $N$ are determined in \eqref{seq.rhob} and \eqref{eq:N}, respectively.
Then there is a constant $c\geq 2c_g$ depending only on $n,s,p,\Lambda$ and $\delta$ such that 
\begin{align*}
    \omega_i\leq c\left(\ln\left(4^{\frac{p-1}{sp}}\rho_0/\rho_{i_g}\right)\right)^{-\delta}= c(1+\ln(\mathcal{R}/\rho_{i_g}))^{-\delta}\leq c(1+\ln(\mathcal{R}/\rho_i))^{-\delta}
\end{align*}
for any $i\leq i_g$.
We now use the inductive argument to prove \eqref{ind.b} for any $i>i_g$. Suppose \eqref{ind.b} holds for any $i=0,1,\ldots,i_g,\ldots j$. If $\omega_{j+1}=2\osc_{Q_j}g$, then the fact that $c\geq 2c_g$ implies
\begin{align*}
    \omega_{j+1}\leq 2c_g\left(1+\ln(\mathcal{R}/\rho_j)\right)^{-\delta}\leq c\left(1+\ln(\mathcal{R}/\rho_{j+1})\right)^{-\delta}.
\end{align*}

On the other hand, if $\omega_{j+1}=m\omega_j$, then we obtain 
\begin{align*}
    \omega_{j+1}\leq cm(1+\ln(\mathcal{R}/\rho_i))^{-\delta}\leq c(1+\ln(\rho_0/\rho_{i+1}))^{-\delta},
\end{align*}
where we have used \eqref{seq.rhob} and \eqref{choi.ig}. Therefore, this implies
\begin{align*}
    \osc_{Q_i}u\leq c(1+\ln(\mathcal{R}/\rho_i))^{-\delta}\quad\text{for any }i\geq0.
\end{align*}
Let us fix $r\in(0,\rho_0]$. Then there is a nonnegative integer $i$ such that
\begin{equation*}
    \rho_{i+1}<r\leq \rho_i.
\end{equation*}
Thus, we get that
\begin{align*}
     \osc_{B_r\times [0,r^{sp}(1/4)^{2-p}]}u=\osc_{B_r\times [0,r^{sp}(\omega_0/4)^{2-p}]}u\leq \osc_{Q_{i}}u\leq c(1+\ln(\mathcal{R}/r))^{-\delta}
\end{align*}
holds for any $r\in(0,\rho_0]$ (Recall $\rho_0=\mathcal{R}/4^{\frac{p-1}{sp}}$), where $c=c(n,s,p,\Lambda,\delta,c_g)$. Therefore, we deduce 
\begin{align*}
     \osc_{B_r\times [0,r^{sp}]}u\leq c(1+\ln(\mathcal{R}/r))^{-\delta}\quad\text{for any }r\in(0,\mathcal{R}],
\end{align*}
which implies \eqref{ineq.intilast}. This completes the proof.
\end{proof}

\section{Existence of a continuous solution}\label{sec:6}
In this section, we prove a global continuity estimates of \eqref{eq.diri.appro} and construct a weak solution $u\in C\left(\overline{\Omega}\times[0,T]\right)$ to \eqref{eq.diri}.
First, we obtain a global uniform continuity estimates of \eqref{eq.diri.appro}.
\begin{lemma}\label{lem.globcon}
    Let $\Omega'\Supset\Omega$ and the complement of $\Omega$ satisfy \eqref{ass.density}.
    We next assume that
    \begin{equation*}
        g\in L^p(0,T;W^{s,p}(\Omega'))\cap C\left({\overline{\Omega'}\times[0,T]}\right)\cap L^\infty(0,T;L^\infty(\bbR^n))
    \end{equation*}and that there is a non-decreasing function $\omega_g:\bbR^+\to\bbR^+$ such that $\omega_g(0)=0$,
\begin{align*}
    \sup_{z_1,z_2\in \overline{\Omega'}\times[0,T]}|g(z_1)-g(z_2)|\leq \omega_g\left(|x_1-x_2|+|t_1-t_2|^{\frac1{sp}}\right).
\end{align*}
Let $u_\epsilon$ be a weak solution to \eqref{eq.diri.appro}. Then there is a constant $\varsigma=\varsigma(n,s,p,\Lambda,\alpha_0)$ such that for any $\delta\in(\varsigma,1)$, if 
\begin{equation}\label{cond.globcon}
    \omega_g(\rho)\leq c_g\left(1+|\ln(1/\rho)|\right)^{-\delta}
\end{equation}
for some constant $c_g\geq1$, then we have 
\begin{align}\label{goal.conti}
    \sup_{z_1,z_2\in\overline{\Omega}\times[0,T]}|u_\epsilon(z_1)-u_\epsilon(z_2)|\leq c\left(\left[1+\left|\ln\left(\frac1{|x_1-x_2|+|t_1-t_2|^{\frac1{sp}}}\right)\right|\right]^{-\varsigma/2}+\epsilon\right),
\end{align}
where $c=c(n,s,p,\Lambda,\alpha_0,\|g\|_{L^\infty},T,\Omega,\Omega',c_g,\delta)$ and $c_1=c_1(n,s,p)$.
\end{lemma}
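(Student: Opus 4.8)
The plan is to assemble the global continuity estimate \eqref{goal.conti} from the three local oscillation estimates already proved, namely Theorem~\ref{thm.intlast} (interior), Theorem~\ref{thm.bdylast} (lateral boundary), and Theorem~\ref{thm.intilast} (initial boundary), after reducing to normalized solutions. First I would note that $u_\epsilon$ is bounded by $\|g\|_{L^\infty}$ via the maximum principle \eqref{max.exir}, so after dividing by a suitable constant $\mathcal{M}=\mathcal{M}(\|g\|_{L^\infty}, n,s,p,\Lambda,\dots)\geq 1$ and using the scaling Lemma~\ref{lem.trans}, we may assume the normalization $\|u_\epsilon\|_{L^\infty}+\mathrm{Tail}(u_\epsilon;\cdot)\leq 1$ holds on all relevant cylinders; the tail is controlled here because we can always fit the relevant cylinders inside a fixed enlargement of $\Omega_T$ inside $\Omega'_T$, and $\|u_\epsilon\|_{L^\infty(\mathbb{R}^n\times[0,T])}\leq\|g\|_{L^\infty}$. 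The modulus $\omega_g$ rescales to $\omega_g/\mathcal{M}$, which still satisfies a bound of the form \eqref{cond.globcon} with a (possibly larger) constant $c_g$, so the hypotheses of Theorems~\ref{thm.bdylast} and~\ref{thm.intilast} are met with $\delta\in(\varsigma,1)$, where $\varsigma=\varsigma(n,s,p,\Lambda,\alpha_0)$ is the minimum of the exponents coming from those three theorems.

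Next I would carry out the standard three-case dichotomy for two points $z_1=(x_1,t_1)$, $z_2=(x_2,t_2)\in\overline{\Omega}\times[0,T]$. Set $d\coloneqq|x_1-x_2|+|t_1-t_2|^{1/(sp)}$; if $d$ is bounded below by a fixed constant depending on $\Omega,\Omega',T$, then \eqref{goal.conti} is trivial after enlarging $c$, so assume $d$ is small. Let $x^*$ be a nearest point of $\overline{\Omega}$-relevant geometry: if $\mathrm{dist}(x_1,\partial\Omega)\geq c\,d$ and $t_1\geq c\,d^{sp}$, we are in the genuinely interior regime and apply Theorem~\ref{thm.intlast} on an intrinsic cylinder $Q^{(\omega_0/4)^{2-p}}_r(z_1)$ centered at $z_1$ with $r\eqsim d$ and $\rho_0$ a fixed fraction of $\mathrm{dist}(z_1,\partial_p\Omega_T)$, giving $\osc u_\epsilon\lesssim(1+\ln(\rho_0/d))^{-\varsigma/2}+\epsilon\lesssim(1+|\ln d|)^{-\varsigma/2}+\epsilon$, which since $z_2$ lies in that cylinder (as $d$ is small and the time-scaling by $(\omega_0/4)^{2-p}\geq 1$ only enlarges the time extent) yields the bound. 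If $t_1< c\,d^{sp}$ (near the initial boundary), apply Theorem~\ref{thm.intilast} at the point $(x_1,0)\in\overline\Omega\times\{0\}$ with $\mathcal{R}$ fixed and $r\eqsim d$; if $\mathrm{dist}(x_1,\partial\Omega)<c\,d$ but $t_1\geq c\,d^{sp}$ (near the lateral boundary), pick $x^*\in\partial\Omega$ with $|x_1-x^*|<c\,d$ and apply Theorem~\ref{thm.bdylast} at $z^*=(x^*,t_1)$ with $r\eqsim d$, using the triangle inequality $|u_\epsilon(z_1)-u_\epsilon(z_2)|\leq\osc_{Q_{cr}(z^*)}u_\epsilon$ since both $z_1,z_2$ lie in $Q_{cr}(z^*)$. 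A corner case where both $t_1<cd^{sp}$ and $\mathrm{dist}(x_1,\partial\Omega)<cd$ is handled by either estimate (e.g.\ the initial-boundary one, since Theorem~\ref{thm.intilast} allows $x_0\in\overline\Omega$ including $\partial\Omega$).

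Having obtained, in each case, $|u_\epsilon(z_1)-u_\epsilon(z_2)|\leq c\bigl(1+|\ln(1/d)|\bigr)^{-\varsigma/2}+c\epsilon$ with $c$ depending on the allowed data (and with the rescaled $c_g$, hence ultimately on $\|g\|_{L^\infty}$ too), I would undo the normalization: multiplying back by $\mathcal{M}$ only changes $c$, giving \eqref{goal.conti}. The main obstacle is bookkeeping the geometry: one must verify that the intrinsic cylinders centered at the three types of points actually contain \emph{both} $z_1$ and $z_2$ and simultaneously sit inside the domain where the respective theorem applies, uniformly in $\epsilon$ — this requires that the intrinsic time-stretch factors $(\omega_0/4)^{2-p}$ and $(\omega_0/4)^{1-p}$ (which are $\geq 1$ since $p>2$ and $\omega_0\leq$ a fixed constant after normalization, but one needs a two-sided control) do not distort the cylinders so much that containment fails; after normalization $\omega_0=1$ so $\theta=(1/4)^{2-p}$ is a fixed constant depending only on $p$, and this distortion is harmless. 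A secondary technical point is checking that the tail terms, when the cylinders are placed near $\partial\Omega$ or $\{t=0\}$ but still inside $\Omega'_T$, remain bounded by the global $L^\infty$ bound — this follows since $u_\epsilon=g$ outside $\Omega_T$ and $g$ is globally bounded, so $\mathrm{Tail}(u_\epsilon;\cdot)\leq c(n,s,p)\|g\|_{L^\infty}$ on any cylinder of radius $\leq 1$. Finally, the constant $\varsigma$ in the statement is exactly $\min$ of the three exponents from Theorems~\ref{thm.intlast},~\ref{thm.bdylast},~\ref{thm.intilast}, all of which depend only on $n,s,p,\Lambda,\alpha_0$, so the claimed dependence is consistent.
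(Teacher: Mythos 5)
Your overall strategy matches the paper's: normalize by a suitable constant $\mathcal{M}$ using Lemma~\ref{lem.trans} so that $\|\overline{u}_\epsilon\|_{L^\infty}+\mathrm{Tail}(\overline{u}_\epsilon;\cdot)\leq 1$, then run a case analysis according to the distance of $z_1$ to the lateral boundary and to $\{t=0\}$, applying Theorem~\ref{thm.intlast}, Theorem~\ref{thm.bdylast}, or Theorem~\ref{thm.intilast} as appropriate. The paper makes the same reduction and also carefully notes that the rescaled $\overline{\beta}_\epsilon$ has $\int\overline{\beta}'_\epsilon\,d\xi=\mathcal{M}\neq 1$, which forces the constants in the three theorems to depend on $\mathcal{M}$ and hence on $\|g\|_{L^\infty}$ -- a point you skip over but which is minor.

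There is, however, a genuine gap in your case analysis: you split cases with \emph{linear} thresholds in $d=|x_1-x_2|+|t_1-t_2|^{1/(sp)}$, namely ``$\mathrm{dist}(x_1,\partial\Omega)\geq cd$ and $t_1\geq cd^{sp}$''. In the interior case you then choose $\rho_0$ as a fixed fraction of $\mathrm{dist}(z_1,\partial_p\Omega_T)$, so on the worst configuration you only get $\rho_0\eqsim d$. But the estimate of Theorem~\ref{thm.intlast} reads $\osc\lesssim (1+\ln(\rho_0/r))^{-\varsigma/2}+\epsilon$, and with $r\eqsim d$ and $\rho_0\eqsim d$ the ratio $\rho_0/r$ stays bounded, so $(1+\ln(\rho_0/r))^{-\varsigma/2}$ is bounded \emph{below} by a fixed positive constant and does not decay as $d\to 0$. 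Your claimed bound ``$\lesssim (1+|\ln d|)^{-\varsigma/2}+\epsilon$'' is therefore false in that regime. The same defect appears in your lateral-boundary case, where you leave $\mathcal{R}$ unspecified: a fixed $\mathcal{R}$ runs afoul of the containment requirement $t_1\geq\mathcal{R}^{sp}$ when $t_1\eqsim cd^{sp}$ is small, while $\mathcal{R}\eqsim d$ again destroys the logarithmic gain in Theorem~\ref{thm.bdylast}. The fix, which is precisely what the paper does, is to split cases with \emph{sub-linear powers} of $r$ such as $r^{1/2}$, and to take the outer scale $\mathcal{R}$ (or $\rho_0$) to be an intermediate power of $r$ (the paper uses $r^{3/4}$, $r^{13/16}$, $r^{1/5}$, $r^{1/6}$, $r^{1/7}$, etc.); then the cylinders nest correctly because $r^{3/4}\ll r^{1/2}$ for small $r$, and $\ln(\rho_0/r)\gtrsim |\ln r|$ so the logarithmic decay survives. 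Without such a polynomial gap between the cutoff, the outer scale, and the inner scale, the argument does not close.
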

\begin{proof}
In the proof, the constant $c$ always depends on $n,s,p,\Lambda,\alpha_0,\|g\|_{L^\infty},T,\Omega,\Omega',c_g$ and $\delta$.
Let us recall the constants $\varsigma_1=\varsigma_1(n,s,p,\Lambda)\in (0,1)$ and $\varsigma_2=\varsigma_2(n,s,p,\Lambda,\alpha_0)\in (0,1)$ determined in Theorem \ref{thm.intlast} and \ref{thm.bdylast}, respectively. Let us choose 
\begin{equation*}
    \varsigma\coloneqq \min\{\varsigma_1,\varsigma_2\}.
\end{equation*}
We note that there is a constant $C_0$ such that 
\begin{align*}
    \mathrm{Tail}(g;Q_r(z_0))\leq C_0\|g\|_{L^\infty(\bbR^n\times[0,T])}
\end{align*}
for any $Q_r(z_0)\subset \bbR^n\times[0,T]$.
Let us fix 
\begin{equation}\label{cond.M}
    \mathcal{M}\coloneqq 2C_0\|g\|_{L^\infty(\bbR^n\times[0,T])}.
\end{equation}
From Lemma \ref{lem.trans}, we observe that $\overline{u}_\epsilon\coloneqq u_\epsilon/\mathcal{M}$ is a unique solution to 
\begin{equation*}
\left\{
\begin{alignedat}{3}
\partial_t (\overline{u_\epsilon}+\overline{\beta_\epsilon}(\overline{u_\epsilon}))+\overline{\mathcal{L}}\overline{u_\epsilon}&= 0&&\qquad \mbox{in  $ \Omega_T$}, \\
\overline{u_\epsilon}&=\overline{g}&&\qquad  \mbox{in $\bbR^n\setminus (\partial\Omega\times (0,T]\cup \Omega\times \{t=0\})$},
\end{alignedat} \right.
\end{equation*}
where $\overline{g}=g/\mathcal{M}$, and the function $\overline{\beta_\epsilon}$ and the nonlocal operator $\overline{\mathcal{L}}$ are defined in Lemma \ref{lem.trans}. By the choice of the constants $C_0$ and $\mathcal{M}$, we have
\begin{align}\label{eq:u.epsilon}
    \|\overline{u_\epsilon}\|_{L^\infty(Q_R(z_0))}+\mathrm{Tail}(\overline{u_\epsilon};Q_R(z_0))\leq 1
\end{align}
for any $Q_R(z_0)\subset \bbR^n\times[0,T]
$. In addition, we observe 
\begin{align}\label{cond.rbetap}
\overline{\beta}_\epsilon'(\xi)\geq0\quad\text{and}\quad \int_{\bbR}\overline{\beta}_\epsilon'(\xi)\,d\xi=\mathcal{M}.
\end{align}
We point out that the second condition given in \eqref{cond.betap} was only used to show the following inequality:
\begin{equation*}
    \pm\int_{k}^u\beta_{\epsilon}'(\xi)(\xi-k)_{\pm}\,d\xi\leq (u-k)_{\pm}.
\end{equation*}
For example, see \eqref{ineq02.di1}. Therefore, by taking the conditions given in \eqref{cond.rbetap} instead of \eqref{cond.betap}, we are able to obtain all of the results given in Theorem \ref{thm.intlast}, Theorem \ref{thm.bdylast} and Theorem \ref{thm.intilast}, while all the universal constants $c$ determined in the three theorems also depend on the constant $\mathcal{M}$.

We are now going to derive \eqref{goal.conti}.
To this end, we note that there is a constant $R_0=R_0(\Omega,\Omega')\leq1$ such that 
\begin{align*}
    B_{2R_0}(x_0)\subset \Omega'
\end{align*}
for any $x_0\in\overline{\Omega}$.
Let us fix 
\begin{equation}\label{cond.rho0}
    r_0\coloneqq\min\{1/2^{10}, R_0^{10},1/4^{\frac{20(p-2)}{sp}},T^{\frac{10}{sp}}\}\leq1,
\end{equation}
and write
\begin{equation}\label{r.globcon}
    r\coloneqq |x_1-x_2|+|t_1-t_2|^{\frac1{sp}}.
\end{equation}
We divide the proof of \eqref{goal.conti} into several cases. We may assume $t_1>t_2$ and we write $d(x)\coloneqq \mathrm{dist}(x,\partial\Omega)$.

\begin{enumerate}
    \item $r\geq r_0$: In this case, together with \eqref{eq:u.epsilon}, \eqref{goal.conti} follows by taking constant $c$ sufficiently large.
    \item $r<r_0$ and $\max\left\{d(x_1),d(x_2),t_1^{\frac1{sp}}\right\}<r^{\frac12}$: Then there is the point $y_0\in\partial\Omega$ such that 
    \begin{equation*}
        d(x_1)=|x_1-y_0|.
    \end{equation*}
    Then we observe from \eqref{cond.rho0} that 
    \begin{align*}
        z_1,z_2 \in  B_{r^{1/4}}(y_0)\times (0,r^{{sp}/{4}}]\subset B_{r_0^{1/5}}(y_0)\times (0,r_0^{{sp}/{5}}]\subset \Omega'_T.
    \end{align*}
    By \eqref{cond.globcon} and \eqref{cond.rho0}, we have 
    \begin{equation}\label{ineq1.globcon}
        \omega_g(\rho)\leq c\left(1+\ln(r_0^{1/5}/r)\right)^{-\delta}\quad\text{for any }r\in(0,r_0^{1/5}].
    \end{equation}
    Using \eqref{ineq1.globcon} and Theorem \ref{thm.intilast} with $\mathcal{R}$ and $r$ replaced by $r_0^{\frac15}$ and $r^{\frac14}$, respectively, we have
    \begin{align*}
        |\overline{u_\epsilon}(z_1)-\overline{u_\epsilon}(z_2)|\leq \osc_{B_{r^{\frac14}}(y_0)\times (0,r^{\frac{sp}{4}}]}\overline{u_\epsilon}\leq c\left(1+\ln\left(r_0^{1/5}/r^{1/4}\right)\right)^{-\varsigma/{2}}\leq c(1+\ln(1/r))^{-\varsigma/2}.
    \end{align*}
    \item $r<r_0$ and $\max\left\{d(x_1),d(x_2),t_1^{\frac1{sp}}\right\}\geq r^{\frac12}$:
    \begin{enumerate}
    \item[(c)-(1)]$d(x_i)\geq r^{\frac12}$ for some $i\in\{1,2\}$ and $t_1^{\frac1{sp}}\geq r^{\frac12}$:
    By \eqref{cond.rho0}, we observe 
    \begin{align*}
        z_1,z_2\in Q^{(\omega_0/4)^{2-p}}_{r^{\frac78}}(x_i,t_1)\subset Q^{(\omega_0/4)^{2-p}}_{r^{\frac{13}{16}}}(x_i,t_1)\subset Q_{r^{\frac34}}(x_i,t_1)\subset \Omega_T,
    \end{align*}
     where we write 
    \begin{equation*}
        \omega_0\coloneqq\max\{\|\overline{u_\epsilon}\|_{L^\infty(Q_{r^{3/4}}(x_i,t_1))}+\mathrm{Tail}(\overline{u_\epsilon};Q_{r^{3/4}}(x_i,t_1)),1\}=1
    \end{equation*}
    Then applying Theorem \ref{thm.intlast} with $\mathcal{R}=r^{3/4}$, $\rho_0=r^{13/16}$, we have 
    \begin{align*}
        |\overline{u_\epsilon}(z_1)-\overline{u_\epsilon}(z_2)|\leq \osc_{Q^{(\omega_0/4)^{2-p}}_{r^{\frac78}}(x_i,t_1)}\overline{u_\epsilon}\leq c\left(1+\ln(1/r^{1/16})\right)^{-\varsigma/2}+4\epsilon\leq c(1+\ln(1/r))^{-\varsigma/2}+4\epsilon.
    \end{align*}
        \item[(c)-(2)] $d(x_i)\geq r^{\frac12}$ for some $i\in\{1,2\}$ and $t_1^{\frac1{sp}}<r^{\frac12}$:
        Suppose $t_1^{\frac1{sp}}\leq r^{\frac34}$.
        Then we get 
        \begin{align*}
            z_1,z_2\in B_{r^{\frac34}}(x_i)\times (0,r^{\frac{3sp}4}]\subset B_{r_0^{\frac12}}(x_i)\times (0,r_0^{\frac{sp}2}]\subset\Omega'_T.
        \end{align*}
        Indeed, we get \eqref{ineq1.globcon} with $r_0^{1/5}$ replaced by $r_0^{1/2}$. Therefore, Theorem \ref{thm.intilast} implies
        \begin{align*}
        |\overline{u_\epsilon}(z_1)-\overline{u_\epsilon}(z_2)|\leq \osc_{B_{r^{\frac34}}(x_i)\times (0,r^{\frac{3sp}4}]}\overline{u_\epsilon} \leq c(1+\ln(1/r))^{-\varsigma/2}.
    \end{align*}
     We next suppose $r^{\frac34}<t_1^{\frac1{sp}}<r^{\frac12}$. Then we have 
    \begin{align*}
        z_1,z_2\in Q^{(\omega_0/4)^{2-p}}_{r^{\frac78}}(x_i,t_1)\subset Q^{(\omega_0/4)^{2-p}}_{r^{\frac{13}{16}}}(x_i,t_1)\subset Q_{r^{\frac34}}(x_i,t_1)\subset \Omega_T,
    \end{align*}
    where we write 
    \begin{equation*}
        \omega_0\coloneqq\max\{\|\overline{u_\epsilon}\|_{L^\infty(Q_{r^{3/4}}(x_i,t_1))}+\mathrm{Tail}(\overline{u_\epsilon};Q_{r^{3/4}}(x_i,t_1)),1\}=1.
    \end{equation*}
    Thus by Theorem \ref{thm.intlast}, we obtain
    \begin{align*}
        |\overline{u_\epsilon}(z_1)-\overline{u_\epsilon}(z_2)|\leq \osc_{Q^{(\omega_0/4)^{2-p}}_{r^{\frac78}}(x_i,t_1)}\overline{u_\epsilon}\leq c(1+\ln(1/r))^{-\varsigma/2}+4\epsilon.
    \end{align*}
        \item[(c)-(3)] $d(x_1),d(x_2)<r^{\frac12}$ and $t_1^{\frac1{sp}}\geq r^{\frac12}$.
   Then there is the point $y_0\in\partial\Omega$ such that 
   \begin{equation*}
       d(x_1)=|x_1-y_0|.
   \end{equation*}
   Suppose $t_1^{\frac1{sp}}\geq r^{\frac15}$
   \begin{align*}
       z_1,z_2\in Q_{r^{\frac14}}(y_0,t_1)\subset Q_{r^{\frac15}}(y_0,t_1)\subset \Omega'_T.
   \end{align*}
   By \eqref{cond.globcon}, we have a constant $c$ such that
   \begin{align*}
       \omega_g(\rho)\leq c\left(1+\ln(r^{\frac15}/\rho)\right)^{-\delta}.
   \end{align*}
        Thus, in light of Theorem \ref{thm.bdylast}, we have 
        \begin{align*}
        |\overline{u_\epsilon}(z_1)-\overline{u_\epsilon}(z_2)|\leq \osc_{Q_{r^{\frac14}}(y_0,t_1)}\overline{u_\epsilon}\leq c(1+\ln(1/r))^{-\varsigma/2}+4\epsilon.
    \end{align*}
    On the other hand, if $r^{\frac12}\leq t_1^{\frac1{sp}}< r^{\frac15}$, then we have 
    \begin{align*}
        z_1,z_2\in  B_{r^{\frac16}}(y_0)\times(0,r^{\frac{sp}{6}}]\subset B_{r_0^{1/7}}(y_0)\times(0,r_0^{{sp}/{7}}]\subset \Omega'_T.
    \end{align*}
    Using \eqref{ineq1.globcon} with $r_0^{1/5}$ replaced by $r_0^{1/7}$ and Theorem \ref{thm.intilast}, we have
    \begin{align*}
        |\overline{u_\epsilon}(z_1)-\overline{u_\epsilon}(z_2)|\leq \osc_{B_{r^{1/6}}(y_0)\times(0,r^{{sp}/{6}}]}\overline{u_\epsilon}\leq c(1+\ln(1/r))^{-\varsigma/2}.
    \end{align*}
    \end{enumerate}
\end{enumerate}
Combining all the cases together with \eqref{cond.M}, \eqref{r.globcon} and the fact that $\overline{u_\epsilon}=u/\mathcal{M}$ yields \eqref{goal.conti}.
\end{proof}

We are now ready to prove Theorem \ref{thm.exist}. We point out that our argument is based on the proof of \cite[Theorem 1.1]{BarKuuLinUrb18}
\begin{proof}[Proof of Theorem \ref{thm.exist}.]
    In light of the Ascoli-Arzel\'a-type argument given in \cite[Section 5]{BarKuuLinUrb18} together with \eqref{goal.conti}, there exists a sequence $\{u_i\}$ which is a solution to \eqref{eq.diri.appro} with $\epsilon$ replaced by $1/i$ such that $u_i$ uniformly converges to $u$ in $\overline{\Omega}\times [0,T]$ and $u\in C\left(\overline{\Omega}\times [0,T]\right)$. We are now going to prove that $u$ is a local weak solution to \eqref{eq.main}. 
    We first show  
    \begin{equation}\label{goal1.exist}
        u\in L^p(0,T;W^{s,p}(\Omega'))\cap L^\infty(0,T;L^2(\Omega))\cap L^\infty(0,T;L^{p-1}_{sp}(\bbR^n))\cap L^{\infty}_{\loc}(0,T;L^{\infty}_{\loc}(\Omega)).
    \end{equation}
    By testing $u_i-g$ to \eqref{eq.diri.appro} with $\epsilon=1/i$, we have for any $\tau\in(0,T]$
    \begin{equation*}
    \begin{aligned}
        J\coloneqq&\int_{0}^{\tau}\int_{\Omega}\partial_t(u_i-g)(u_i-g)\,dz+\int_{0}^{\tau}\int_{\Omega}\partial_t\beta_{1/i}(u_i)(u_i-g)\,dz\\
        &\quad+\int_{0}^{\tau}\int_{\bbR^{n}}\int_{\bbR^n}\left[\phi(u_i(x,t)-u_i(y,t))-\phi(g(x,t)-g(y,t))\right]\\
        &\qquad \times \frac{\left[u_i(x,t)-u_i(y,t)-(g(x,t)-g(y,t))\right]}{|x-y|^{n+sp}}k(x,y,t)\,dx\,dy\,dt\\
        &=\int_{0}^{\tau}\int_{\bbR^{n}}\int_{\bbR^n}\frac{[\phi(g(x,t)-g(y,t))]\left[u_i(x,t)-u_i(y,t)-(g(x,t)-g(y,t))\right]}{|x-y|^{n+sp}}k(x,y,t)\,dx\,dy\,dt\\
        &\eqqcolon J_1,
    \end{aligned}
    \end{equation*}
   where we write $\phi(t)=|t|^{p-2}t$. We obtain 
   \begin{align*}
       J\geq \int_{\Omega}\frac{|(u_i-g)(\tau,x)|^2}{2}\,dx+{\underbrace{\int_{0}^\tau\int_{\Omega}\partial_t\beta_{1/i}(u_i)(u_i-g)\,dz}_{\coloneqq \widetilde{J}}}+\frac{1}{\Lambda}[u_i-g]^p_{L^{p}(0,\tau;W^{s,p}(\bbR^n))}
   \end{align*}
   for any $\tau\in(0,T]$.
   In addition, by integration by parts, we further estimate $\widetilde{J}$ as
   \begin{align*}
       \widetilde{J}&=\left(\int_{\Omega}\int^{u_i(x,\tau)}_0\beta_{1/i}'(\xi)\xi\,d\xi\,dx-\int_{\Omega}\int^{u_i(x,0)}_0\beta_{1/i}'(\xi)\xi\,d\xi\,dx\right)\\
       &+\int_{0}^\tau\int_{\Omega}\beta_{1/i}(u_i)\partial_tg \,dz-\int_{\Omega}(\beta_{1/i}(u_i)g)(x,\tau)\,dx+\int_{\Omega}(\beta_{1/i}(u_i)g)(x,0)\,dx,
   \end{align*}
   where we have used the fact that 
   \begin{align*}
       \int_{0}^{\tau}\int_{\Omega}\partial_t\beta_{1/i}(u_i)u_i\,dz=\int_{0}^{\tau}\partial_t\left(\int_{\Omega}\int_{0}^{u_\epsilon(x,t)}\beta'_{1/i}(\xi)\xi\,d\xi\,dx\right)\,dt.
   \end{align*}
   Therefore, we have 
   \begin{align*}
       \widetilde{J}\geq -\left(\int_{0}^T\int_{\Omega}|\partial_tg|^2\,dz+\sup_{t\in[0,T]}\int_{\Omega}|g(x,t)|^2\,dx+c\right)
   \end{align*}
   for some constant $c=c(n,T,\Omega)$.

On the other hand, we estimate $J_1$ as 
   \begin{align*}
       J_1&\leq c[g]^p_{L^p(0,\tau;W^{s,p}(\Omega'))}+\frac{1}{4\Lambda}[u_i-g]^p_{L^p(0,\tau;W^{s,p}(\Omega'))}+c\|g\|_{L^p(\Omega_\tau)}\|u_i-g\|^{p-1}_{L^p(\Omega_\tau)}\\
       &\quad+c\|g\|^{p-1}_{L^{p-1}(0,\tau;L^{p-1}_{sp}(\bbR^n))}\sup_{t\in(0,\tau)}\int_{\Omega}|(u_i-g)(x,t)|\,dx.
   \end{align*}
   By the fact that 
   \begin{equation}\label{ineq1.exist}
       \|u_i-g\|_{L^p(\Omega_\tau)}\leq c[u_i-g]_{L^p(0,\tau;W^{s,p}(\Omega'))}
   \end{equation}
for some constant $c=c(n,s,p,\Omega,\Omega',\alpha_0)$ which follows from \cite[Lemma 4.7]{Coz17} together with \eqref{ass.density}, and by using Young's inequality, we further estimate $J_1$ as 
\begin{align*}
    J_1&\leq c\|g\|^p_{L^p(0,\tau;W^{s,p}(\Omega'))}+\frac{1}{2\Lambda}[u_i-g]^p_{L^p(0,\tau;W^{s,p}(\Omega'))}+\frac{1}{4}\sup_{t\in[0,\tau]}\int_{\Omega}|(u_i-g)(x,t)|^2\,dx\\
    &\quad+c\|g\|^{2(p-1)}_{L^{p-1}(0,\tau;L^{p-1}_{sp}(\bbR^n))}
\end{align*}
for some constant $c=c(n,s,p,\Omega,\Omega',\alpha_0)$. We now combine all of the estimate $J$, $\widetilde{J}$ and $J_1$ to see that 
\begin{align*}
    \sup_{\tau\in[0,T]}\int_{\Omega}{|(u_i-g)(\tau,x)|^2}\,dx+[u_i-g]^p_{L^{p}(0,T;W^{s,p}(\bbR^n))}\leq c,
\end{align*}
where we have used \eqref{cond.g} with $\partial_tg\in L^2(\Omega_T)$. This implies 
\begin{align*}
    \sup_{\tau\in[0,T]}\int_{\Omega}{|u_i(\tau,x)|^2}\,dx+[u_i]^p_{L^{p}(0,T;W^{s,p}(\Omega'))}\leq c.
\end{align*}
Moreover, after a few simple computations together with \eqref{ineq1.exist}, we get 
\begin{equation}\label{ineq2.exist}
    \sup_{\tau\in[0,T]}\int_{\Omega}{|u_i(\tau,x)|^2}\,dx+\|u_i\|^p_{L^{p}(0,T;W^{s,p}(\Omega'))}\leq c,
\end{equation}
where the constant $c$ is independent of $i$.
Applying Fatou's lemma into \eqref{ineq2.exist} along the fact that $u_i\to u$ in $C\left(\overline{\Omega}\times [0,T]\right)$ and $u\equiv g$ on $(\bbR^n\setminus \Omega)\times (0,T)$, we get \eqref{goal1.exist}.

We next prove that there is a function 
\begin{equation*}
    v\in u+\beta(u)
\end{equation*}
such that for any $t_1,t_2\in(0,T]$ and $\oldphi\in L^p(t_1,t_2;W_0^{s,p}(\Omega,\Omega'))\cap W^{1,2}(t_1,t_2;L^2(\Omega))$,
\begin{align*}
    &\int_{\Omega}(v\oldphi)(x,\tau)\,dx\Bigg\rvert_{t_1}^{t_2}-\int_{t_1}^{t_2}\int_{\Omega}v\partial_t\oldphi\,dz\\
    &\quad+\int_{t_1}^{t_2}\int_{\bbR^n}\int_{\bbR^n}\frac{\phi(u(x,t)-u(y,t))(\oldphi(x,t)-\oldphi(y,t))}{|x-y|^{n+sp}}k(x,y,t)\,dx\,dy\,dt=0.
\end{align*}
Since 
\begin{equation*}
    \left\|\frac{\phi(u_i(x,t)-u_i(y,t))}{|x-y|^{\frac{n+sp}{p'}}}\right\|_{L^{p'}(\Omega'\times \Omega'\times (0,T))}<\infty
\end{equation*}
and 
\begin{equation*}
    \lim_{i\to\infty}\frac{\phi(u_i(x,t)-u_i(y,t))}{|x-y|^{\frac{n+sp}{p'}}}\to \frac{\phi(u(x,t)-u(y,t))}{|x-y|^{\frac{n+sp}{p'}}}\quad\text{for a.e.}\,\,(x,y,t)\in\Omega'\times\Omega'\times (0,T),
\end{equation*}
by the weak compactness theorem, there is a sequence $\{u_i\}$ (up to a labeling) such that
\begin{align}\label{limit1.exist}
    \frac{\phi(u_i(x,t)-u_i(y,t))}{|x-y|^{\frac{n+sp}{p'}}}k(x,y,t)^{}\rightharpoonup \frac{\phi(u(x,t)-u(y,t))}{|x-y|^{\frac{n+sp}{p'}}}k(x,y,t)\quad\text{in }L^{p'}(\Omega'\times \Omega'\times (0,T)).
\end{align}
Next, we observe for any $\phi\in L^p(t_1,t_2;W_0^{s,p}(\Omega,\Omega'))$, 
\begin{align*}
    &\lim_{i\to\infty}\int_{t_1}^{t_2}\int_{\bbR^n}\int_{\bbR^n}\frac{\phi(u_i(x,t)-u_i(y,t))(\oldphi(x,t)-\oldphi(y,t))}{|x-y|^{n+sp}}k(x,y,t)\,dx\,dy\,dt\\
    &=\lim_{i\to\infty}\int_{t_1}^{t_2}\int_{\Omega'}\int_{\Omega'}\frac{\phi(u_i(x,t)-u_i(y,t))(\oldphi(x,t)-\oldphi(y,t))}{|x-y|^{n+sp}}k(x,y,t)\,dx\,dy\,dt\\
    &\quad+2\lim_{i\to\infty}\int_{t_1}^{t_2}\int_{\bbR^n\setminus \Omega'}\int_{\Omega}\frac{\phi(u_i(x,t)-u_i(y,t))(\oldphi(x,t)-\oldphi(y,t))}{|x-y|^{n+sp}}k(x,y,t)\,dx\,dy\,dt\eqqcolon J_1+J_2.
\end{align*}
By \eqref{limit1.exist} together with the fact that 
\begin{align*}
    \frac{\oldphi(x,t)-\oldphi(y,t)}{|x-y|^{\frac{n+sp}{p}}}\in L^p(\Omega'\times \Omega'\times (0,T)),
\end{align*} we have 
\begin{align*}
    J_1=\int_{t_1}^{t_2}\int_{\Omega'}\int_{\Omega'}\frac{\phi(u(x,t)-u(y,t))(\oldphi(x,t)-\oldphi(y,t))}{|x-y|^{n+sp}}k(x,y,t)\,dx\,dy\,dt.
\end{align*}

On the other hand, using the fact that $x\in \Omega$ and $y\in \bbR^n\setminus \Omega'$, $u_i\to u$ in $C(\overline{\Omega}\times [0,T])$ and $u_i(y)=g(y)=u(y)$, we get 
\begin{align*}
    J_2=2\int_{t_1}^{t_2}\int_{\bbR^n\setminus \Omega'}\int_{\Omega}\frac{\phi(u(x,t)-u(y,t))(\oldphi(x,t)-\oldphi(y,t))}{|x-y|^{n+sp}}k(x,y,t)\,dx\,dy\,dt.
\end{align*}
Therefore, we have 
\begin{equation}\label{limit10.exist}
    \begin{aligned}
    &\lim_{i\to\infty}\int_{t_1}^{t_2}\int_{\bbR^n}\int_{\bbR^n}\frac{\phi(u_i(x,t)-u_i(y,t))(\oldphi(x,t)-\oldphi(y,t))}{|x-y|^{n+sp}}k(x,y,t)\,dx\,dy\,dt\\
    &=\int_{t_1}^{t_2}\int_{\bbR^n}\int_{\bbR^n}\frac{\phi(u(x,t)-u(y,t))(\oldphi(x,t)-\oldphi(y,t))}{|x-y|^{n+sp}}k(x,y,t)\,dx\,dy\,dt.
    \end{aligned}
\end{equation}

We are now going to find a function $v$ such that
\begin{equation*}
        \{v(z)\,:\,z\in \Omega_T\}\subset \{(u+\beta(u))(z)\,:\,z\in\Omega_T\}
    \end{equation*}
and
\begin{equation}\label{limit11.exist}
\begin{aligned}
    &\lim_{i\to\infty}\left[\int_{\Omega}\left(\left(u_i+\beta_{1/i}(u_i)\right)\oldphi\right)(x,\tau)\,dx\Bigg\rvert_{t_1}^{t_2}+\int_{t_1}^{t_2}\int_{\Omega}\left(u_i+\beta_{1/i}(u_i)\right)\partial_t\oldphi\,dz\right]\\
    &=\int_{\Omega}\left(v\oldphi\right)(x,\tau)\,dx\Bigg\rvert_{t_1}^{t_2}+\int_{t_1}^{t_2}\int_{\Omega}v\partial_t\oldphi\,dz.
\end{aligned}
\end{equation}
Let us define
\begin{equation*}
    \mathcal{A}_1\coloneqq \{z\in \Omega_T\,:\, u(z)>0\}\quad\text{and}\quad \mathcal{A}_2\coloneqq \{z\in \Omega_T\,:\, u(z)<0\}.
\end{equation*}
First, we prove
\begin{align}\label{limit2.exist}
    \lim_{i\to\infty}\beta_{1/i}(u_i)(z)=1\quad\text{for any }z\in \mathcal{A}_1
\end{align}
and
\begin{align}\label{limit3.exist}
    \lim_{i\to\infty}\beta_{1/i}(u_i)(z)=0\quad\text{for any }z\in \mathcal{A}_2.
\end{align}
Let us assume $z\in \mathcal{A}_1$. Then $u(z)>\delta$ for some $\delta>0$ and there is a positive integer $l$ such that if $i\geq l$, then $u_i(z)>\frac{\delta}{2}$, as $u_i$ uniformly converges to $u$. Thus, we get
\begin{equation*}
    \beta_{1/i}(u_i(z))=1\quad\text{if } i>\max\{2/\delta,l\},
\end{equation*}
which implies \eqref{limit2.exist}. We next assume $z\in \mathcal{A}_2$. Then $u(z)<-\delta$ for some $\delta>0$ so there is a positive integer $l$ such that if $i\geq l$, then $u_i(z)<-\frac{\delta}{2}$. Thus, we get
\begin{equation*}
    \beta_{1/i}(u_i(z))=0\quad\text{if } i>\max\{2/\delta,l\},
\end{equation*}
which gives \eqref{limit3.exist}. Let us write $w_i(z)\coloneqq\beta_{1/i}(u_i)(z)\in [0,1]$. Since $\sup\limits_{t\in[0,T]}\sup\limits_{i}\|w_i(\cdot,t)\|_{L^\infty(\Omega)}\leq 1$, by the weak compactness theorem, there is a function $w(x,t)\in[0,1]$ such that 
\begin{equation}\label{wlimit.exist}
    w_i(\cdot,t)\overset{\ast}{\rightharpoonup} w(\cdot,t)\quad\text{in }L^1(\Omega)\quad\text{for every }t\in[0,T].
\end{equation}
In addition, we get 
\begin{equation}\label{limit31.exist}
    w_i\overset{\ast}{\rightharpoonup} w\quad\text{in }L^1(\Omega_T).
\end{equation}
To do this, let us choose $\oldphi \in L^1(\Omega_T)$ and write
\begin{align*}
    A_i(t)\coloneqq \int_{\Omega}(w_i\oldphi)(x,t)\,dx\quad\text{and}\quad g(t)\coloneqq\int_{\Omega}|\oldphi|(x,t)\,dx.
\end{align*}
By the fact that $\oldphi\in L^1((0,T);L^1(\Omega))$, we have $\oldphi(\cdot,t)\in L^1(\Omega)$ a.e. in $(0,T)$. Therefore, we get 
\begin{align}\label{limit4.exist}
    \lim_{i\to\infty}A_i(t)=\lim_{i\to\infty}\int_{\Omega}(w_i\oldphi)(x,t)\,dx=\int_{\Omega}(w\oldphi)(x,t)\,dx\quad\text{a.e. in }(0,T),
\end{align}
where we have used \eqref{wlimit.exist}.

Since $|A_i(t)|\leq g(t)$ and $g(t)\in L^1(0,T)$, by Lebesgue dominated convergence theorem together with \eqref{limit4.exist}, we have 
\begin{align*}
    \lim_{i\to\infty}\int_{\Omega_T}w_i\oldphi\,dz=\lim_{i\to\infty}\int_{0}^{T}A_i(t)\,dt=\int_{0}^{T}\lim_{i\to\infty}A_i(t)\,dt=\int_{0}^T\int_{\Omega}w\oldphi\,dz.
\end{align*}
Therefore, we have verified \eqref{limit31.exist}.
In addition, by \eqref{limit2.exist}, we have $w(z)=1$ a.e., if $z\in \mathcal{A}_1$. Otherwise, there is a set $\mathcal{A}\subset\mathcal{A}_1$ with $|\mathcal{A}|\neq 0$ and $w(z)< 1$ in $z\in \mathcal{A}$. Then we get
\begin{align*}
   |\mathcal{A}|= \lim_{i\to\infty}\int_{\mathcal{A}}\beta_{1/i}(u_i)(z)\,dz=\lim_{i\to\infty}\int_{\mathcal{A}}w_i(z)\,dz=\int_{\mathcal{A}}w(z)\,dz<|\mathcal{A}|,
\end{align*}
which is a contradiction. Similarly, we observe from \eqref{limit3.exist} that $w(z)=0$ a.e., if $z\in \mathcal{A}_2$. By taking $v=u+w$, we prove \eqref{limit11.exist}. Therefore, in light of \eqref{limit10.exist} and \eqref{limit11.exist}, we construct a pair $(u,v)$ which is a weak solution to \eqref{eq.diri}. In addition, by Lemma \ref{lem.globcon} and using the fact that $u_i\to u$ in $C\left(\overline{\Omega}\times[0,T]\right)$, we derive \eqref{ineq.thm.exist}. This completes the proof.
\end{proof}

\printbibliography

\end{document}